\newtheorem{remark}{Remark}
\title{Local Exponential $\mathbf{H^2}$ Stabilization of a $\mathbf{2\times 2}$ Quasilinear Hyperbolic System using Backstepping
\thanks{This work is an extension of the paper of the same title presented at the 2011 CDC-ECC conference.}
}
\author{
Jean-Michel Coron\thanks{Laboratoire Jacques-Louis Lions, Univ. Pierre et Marie Curie and Institut Universitaire de France, B.C. 187, 4 place Jussieu, 75252 Paris Cedex 05, France ({\tt coron@ann.jussieu.fr}). JMC was partially
supported by the ERC advanced grant 266907 (CPDENL) of the 7th Research Framework Programme (FP7).}
\and
Rafael Vazquez\thanks{Department of Aerospace Engineering, Universidad de Sevilla, Camino de los Descubrimiento s.n., 41092 Sevilla, Spain ({\tt rvazquez1@us.es}).}
 \and Miroslav Krstic\thanks{Department of Mechanical and Aerospace Engineering, University of California San Diego, La Jolla, CA 92093-0411, USA ({\tt  krstic@ucsd.edu}).}
\and Georges Bastin\thanks{Department of Mathematical Engineering, Univ. Catholique de Louvain, 4 Avenue G. Lema\^itre, 1348 Louvain-la-Neuve Belgium ({\tt Georges.Bastin@uclouvain.be}). Research supported by the Belgian Programme on Interuniversity Attraction Poles (IAP V/22)}
}
\begin{document}

\maketitle

\begin{abstract}
In this work, we consider the problem of boundary stabilization for a quasilinear $2\times2$ system of first-order hyperbolic PDEs. We design a new full-state feedback control law, with actuation on only one end of the domain,  which achieves $H^2$ exponential stability of the closed-loop system. Our proof uses a backstepping transformation to find new variables for which a strict Lyapunov function can be constructed. The kernels of the transformation are found to verify a Goursat-type $4\times4$ system of first-order hyperbolic PDEs, whose well-posedness is shown using the method of characteristics and successive approximations. Once the kernels are computed, the stabilizing feedback law can be explicitly constructed from them.

\end{abstract}

\begin{keywords}
nonlinear hyperbolic systems, boundary conditions, stability, Lyapunov function, backstepping, method of characteristics, integral equation, Goursat problem
\end{keywords}

\begin{AMS}
TBD
\end{AMS}

\pagestyle{myheadings}
\thispagestyle{plain}
\markboth{J.-M. CORON, R. VAZQUEZ, M. KRSTIC AND G. BASTIN}{LOCAL $H^2$ STABILIZATION OF A QUASILINEAR HYPERBOLIC SYSTEM}

\section{Introduction}
In this paper we are concerned with the problem of boundary stabilization for a $2\times2$ system of first-order hyperbolic \emph{quasilinear} PDEs, with actuation at only one of the boundaries. The quasilinear case is of interest since many relevant physical systems are described by $2\times2$ systems of first-order hyperbolic quasilinear PDEs, such as open channels~\cite{dos-santos, GugatLeugering, 2004-Gugat-Leugering-Schmidt,Halleux}, transmission lines~\cite{curro}, gas flow pipelines~\cite{gugat-2} or road traffic models~\cite{goatin}.

This problem has been considered in the past for  $2\times2$ systems~\cite{li1} and even $n\times n$ systems~\cite{li2}, using the explicit evolution of the Riemann invariants along the characteristics. More recently, an approach using control Lyapunov functions has been developed, for $2\times2$ systems~\cite{coron} and $n\times n$ systems~\cite{coron2}. These results use only static output feedback (the output being the value of the state on the boundaries). However, they do not deal with the same class of systems considered in this work (which includes an extra term in the equations); with this term, it has been shown in~\cite{bastin} that there are examples (even for linear $2\times2$ system) for which there are no control Lyapunov functions of the ``diagonal'' form $\int_0^1z(x,t)^TQ(x)z(x,t) dx$ (see  next section for notation) which would allow the computation of a static output feedback law to stabilize the system, even if feedback is allowed on both sides of the boundary.

Several other authors have also studied this problem. For instance, the linear case has been analyzed in~\cite{xu} (using a Lyapunov approach) and in~\cite{litrico} (using a spectral approach). The nonlinear case has been considered by~\cite{dick} and~\cite{gugat} using a Lyapunov approach, and in~\cite{prieur},~\cite{prieur2}, and~\cite{dos-santos} using a Riemann invariants approach.

The basis of our design is the backstepping method~\cite{krstic}; initially developed for parabolic equations, it has been used for first-order hyperbolic equations~\cite{krstic3}, delay systems~\cite{krstic5}, second-order hyperbolic equations~\cite{krstic2}, fluid flows~\cite{vazquez}, nonlinear PDEs~\cite{vazquez2} and even PDE adaptive designs~\cite{krstic4}. The method allows us to design a full-state feedback law (with actuation on only one end of the domain) making the closed-loop system locally exponentially stable in the $H^2$ sense. The gains of the feedback law are the solution of a  4 x 4 system of first-order hyperbolic linear PDEs, whose well-posedness is shown. The proof of stability is based on~\cite{coron2}; we construct a strict Lyapunov function, locally equivalent to the $H^2$ norm, and written in coordinates defined by the (invertible) backstepping transformation.

The paper is organized as follows. In Section~\ref{sect-prob} we formulate the problem. In Section~\ref{sect-linear} we consider the linear case and formulate a backstepping design that globally stabilizes the system in the $L^2$ sense. In Section~\ref{sect-main} we present our main result, which shows that the linear design locally stabilizes the nonlinear system in the $H^2$ sense. The proof of this result is given in Section~\ref{sect-proof}.
We finish in Section~\ref{sect:conclusions} with some concluding remarks. We also include an appendix with the proof of well-posedness of the kernel equations, and some technical lemmas.

\section{Problem Statement}\label{sect-prob}

Consider the system
\begin{equation}
z_t+\Lambda(z,x)z_x+f(z,x)=0,\quad x\in[0,1],\, t\in[0,+\infty),\label{eqn-z}
\end{equation}
where $z\,:\,[0,1]\times[0,\infty)\rightarrow \mathbb{R}^2$, $\Lambda\,:\,\mathbb{R}^2 \times [0,1] \rightarrow \mathcal{M}_{2,2}(\mathbb{R})$, $f\,:\,\mathbb{R}^2 \times [0,1] \rightarrow \mathbb{R}^2$, with $ \mathcal{M}_{2,2}(\mathbb{R})$ denoting the set of $2\times2$ real matrices. We assume that $\Lambda(z,x)$ is twice continuously differentiable with respect to $z$ and $x$, and we assume that (possibly after an appropiate state transformation) $\Lambda(0,x)$ is a diagonal matrix with nonzero eigenvalues $\Lambda_1(x)$ and $\Lambda_2(x)$ which are, respectively, positive and negative, i.e.,
\begin{equation}
\Lambda(0,x)=\mathrm{diag}(\Lambda_1(x),\Lambda_2(x)),\quad\Lambda_1(x)>0,\Lambda_2(x)<0,\label{eqn-speeds}
\end{equation}
where $\mathrm{diag}(\Lambda_1,\Lambda_2)$ denotes the diagonal matrix with $\Lambda_1$ in the first position of the diagonal and $\Lambda_2$ in the second.

We also assume that $f(0,x)=0$, implying that there is an equilibrium at the origin, and that $f$ is twice continuously differentiable with respect to $z$. Denote
\begin{equation}
\frac{\partial f}{\partial z}(0,x)=\left[
\begin{array}{cc}
f_{11}(x) & f_{12}(x) \\
f_{21}(x) & f_{22}(x)
\end{array}
\right],
\end{equation}
and assume that $f_{ij}\in\mathcal C^1\left([0,1]\right)$.

Denoting $z=[ z_1 \ z_2]^T$, we study classical solutions of the system under the following boundary conditions
\begin{equation}
z_1(0,t) =G_0 \left(
z_2(0,t)
 \right),
 \quad
z_2(1,t)=U(t),\quad\label{eqn-bcz12} t\in[0,+\infty)
\end{equation}
which are consistent (see~\cite{russell}) with the signs of (\ref{eqn-speeds}), at least for small values of $z$. We assume that $G_0(x)$ is twice differentiable and vanishes at the origin. In (\ref{eqn-bcz12}),
 $U(t)$ is the actuation variable, and our task is to find a feedback law for $U(t)$ to make the origin of system (\ref{eqn-z}),(\ref{eqn-bcz12}) locally exponentially stable.
\begin{remark}\em
The case with $f=0$ in (\ref{eqn-z}) was addressed in~\cite{coron} and~\cite{coron2} by using control Lyapunov functions to design a static output feedback law; this approach has been shown to fail in~\cite{bastin} for some cases with $f\neq0$, at least for a ``diagonal'' Lyapunov function of the form $\int_0^1z^T(x,t)Q(x)z(x,t) dx$.
\end{remark}
\section{Stabilization of $2\times2$ hyperbolic linear systems}\label{sect-linear}
Next, we present a new design, based on the backstepping method, to stabilize a $2\times2$ hyperbolic linear system; this procedure will be used later to \emph{locally} stabilize system (\ref{eqn-z}), (\ref{eqn-bcz12}).

Consider the system
\begin{equation}
w_t=\Sigma(x)w_x+C(x)w, \label{eqn-wlinear}\quad x\in[0,1],\, t\in[0,+\infty),
\end{equation}
where $w\,:\,[0,1]\times[0,\infty)\rightarrow \mathbb{R}^2$, $\Sigma,C\,: [0,1] \rightarrow \mathcal{M}_{2,2}(\mathbb{R})$,
where the matrices $\Sigma$ and $C$ are respectively diagonal and antidiagonal, as follows:
\begin{eqnarray}
\Sigma(x)&=&
\left(
\begin{array}{cc}
-\epsilon_1(x)  & 0     \\
0  & \epsilon_2(x)
\end{array}
\right),
C(x)=
\left(
\begin{array}{cc}
0  & c_1(x)     \\
c_2(x)  & 0
\end{array}
\right),\quad
\end{eqnarray}
where $c_1(x),c_2(x)$ are $\mathcal C([0,1])$ and $\epsilon_1(x),\epsilon_2(x)$ are $\mathcal C^1([0,1])$ functions, verifying that $\epsilon_1(x),\epsilon_2(x)>0$, and with boundary conditions
\begin{equation}\label{eqn-bculinear}
u(0,t)=qv(0,t),\quad
v(1,t)= U(t),
\end{equation}
where $q\in \mathbb{R}$ and the components of $w$ are $w=\left[
 u \ v
\right] ^T$.
Our objective is to design a full-state feedback control law for $U(t)$ to ensure that the closed-loop system is globally asymptotically stable in the $L^2$ norm, which is defined as $\Vert w(\cdot,t) \Vert_{L^2} = \sqrt{\int_0^1 \left(u^2(\xi,t)+v^2(\xi,t)\right) d\xi}$. There are two cases, depending on whether $q$ in (\ref{eqn-bculinear}) is nonzero or $q=0$. We first analyze the first case, thus assuming $q\neq 0$.

\subsection{Target system and backstepping transformation}
Our approach to designing $U(t)$, following the backstepping method, is to seek a mapping that transforms $w$ into a {\em target} variable $\gamma$ with asymptotically stable dynamics as follows:
\begin{equation}
\gamma_t=\Sigma(x)\gamma_x, \label{eqn-gammalinear}
\end{equation}
with boundary conditions
\begin{eqnarray}
\alpha(0,t)&=&q\beta(0,t),\quad
\beta(1,t)=0,\label{eqn-bc1beta}
\end{eqnarray}
 where the components of $\gamma$ are denoted as
\begin{eqnarray}
\gamma (x,t)=
\left[
\alpha(x,t) \ \
\beta(x,t)
\right]^T.
\end{eqnarray}
System (\ref{eqn-gammalinear}), (\ref{eqn-bc1beta}) verifies the properties expressed in the following proposition.
\begin{proposition}\label{pr-target}
Consider system (\ref{eqn-gammalinear}), (\ref{eqn-bc1beta}) with initial condition $\gamma_0\in L^2([0,1])$. Then, for every $\lambda>0$, there exists $c>0$ such that
\begin{equation}
\Vert \gamma(\cdot,t) \Vert_{L^2} \leq c\, \mathrm{e}^{-\lambda t} \Vert \gamma_0 \Vert_{L^2}.
\end{equation}
In fact, the equilibrium $\gamma \equiv 0$ is reached in finite time $t=t_F$, where $t_F$ is given by
\begin{equation}
t_F=\int_0^1 \left(\frac{1}{\epsilon_1(\xi)}+\frac{1}{\epsilon_2(\xi)} \right) d\xi.\label{eqn-tF}
\end{equation}
\end{proposition}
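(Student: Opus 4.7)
The plan is to exploit the fact that (\ref{eqn-gammalinear}), (\ref{eqn-bc1beta}) is a pure transport system decoupled into two scalar equations, and to solve it by the method of characteristics. The key observation is that $\alpha$ is transported in the $+x$ direction with speed $\epsilon_1(x)>0$ and enters at $x=0$ with datum $q\beta(0,t)$, while $\beta$ is transported in the $-x$ direction with speed $\epsilon_2(x)>0$ and enters at $x=1$ with datum $0$. Since the incoming boundary datum for $\beta$ is identically zero, one expects $\beta$ to be \emph{flushed out} of $[0,1]$ in time $t_2 := \int_0^1 d\xi/\epsilon_2(\xi)$, after which $\alpha(0,t) = q\beta(0,t) = 0$ and $\alpha$ is in turn flushed out in an additional $\int_0^1 d\xi/\epsilon_1(\xi)$ time units, for a total of $t_F$.

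Concretely, I would introduce the characteristic curves $\tau \mapsto X_1(\tau;x,t)$ and $X_2(\tau;x,t)$ defined by $\dot X_1 = \epsilon_1(X_1)$, $\dot X_2 = -\epsilon_2(X_2)$, with $X_i(t;x,t)=x$, and note that these are well-defined $C^1$ flows because the $\epsilon_i$ are $C^1$ and strictly positive. Along such curves, $\alpha$ and $\beta$ are constant. Tracing the $\beta$-characteristic through $(x,t)$ backward in time, one reaches either $t=0$ or the boundary $x=1$ first; the time needed to reach $x=1$ is exactly $\int_x^1 d\xi/\epsilon_2(\xi) \leq t_2$, so as soon as $t \geq t_2$ the characteristic hits the (vanishing) boundary datum and $\beta(x,t)=0$. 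An analogous argument for $\alpha$, now using the boundary condition $\alpha(0,t)=q\beta(0,t)$ which vanishes for $t \geq t_2$, yields $\alpha(x,t)=0$ as soon as $t \geq t_2 + \int_0^x d\xi/\epsilon_1(\xi)$. Taking $x=1$ gives $\gamma(\cdot,t) \equiv 0$ for $t \geq t_F$, which is (\ref{eqn-tF}).

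It remains to produce the estimate on $[0,t_F]$. On that compact time interval, the characteristic formula combined with a change of variables, whose Jacobian is uniformly controlled because each $\epsilon_i$ is $C^1$ and bounded away from $0$, yields $\|\gamma(\cdot,t)\|_{L^2} \leq C \|\gamma_0\|_{L^2}$ for some $C>0$ independent of $\gamma_0$; the factor $|q|$ enters only when $\alpha$ is fed by $\beta$ through the boundary at $x=0$ and is harmless. Given any $\lambda > 0$, it then suffices to set $c := C e^{\lambda t_F}$ to obtain the desired bound on $[0,t_F]$; on $[t_F,+\infty)$ the inequality holds trivially because the left-hand side vanishes. The only step requiring a bit of care is justifying the characteristic representation for merely $L^2$ initial data rather than classical solutions, which I would handle by density: the representation is immediate for smooth compactly supported data, and the estimate just derived shows that the solution map extends continuously to $L^2([0,1])$.
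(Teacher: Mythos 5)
Your proof is correct, and the finite-time extinction argument is exactly the one the paper uses (the paper writes the characteristics through the explicit functions $\phi_1,\phi_2$ of (\ref{eqn-phi}) and the representation (\ref{eqn-expalpha})--(\ref{eqn-expbeta}), which is equivalent to your flow-map formulation). Where you genuinely diverge is in how you obtain the exponential estimate: the paper proves it independently with the weighted Lyapunov function $V_1=\int_0^1\gamma^T D\gamma\,dx$ of (\ref{eqn-Ddef})--(\ref{eqn-lyaplinear}), choosing $A,B,\mu$ so that $\dot V_1\leq-\lambda_1V_1-\lambda_2(\alpha^2(1,t)+\beta^2(0,t))$, whereas you deduce it as a corollary of finite-time extinction together with the uniform bound $\Vert\gamma(\cdot,t)\Vert_{L^2}\leq C\Vert\gamma_0\Vert_{L^2}$ on $[0,t_F]$ (which does follow from the characteristic representation and the boundedness of the Jacobians $\epsilon_i(x)/\epsilon_i(y)$, with the $|q|$ factor entering harmlessly through the $x=0$ coupling), and then set $c=C\mathrm{e}^{\lambda t_F}$. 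Your route is shorter and self-contained for this proposition, and your closing remark about extending from smooth data to $L^2$ data by density is a point the paper glosses over. What the paper's Lyapunov computation buys, however, is not this proposition itself but the template for everything that follows: the same $V_1$, with the same choice of $D(x)$, reappears essentially verbatim in Proposition~\ref{pr-gamma} and its higher-order analogues for the quasilinear system, where no explicit solution is available and the finite-time/characteristics shortcut is unavailable. So your argument is a valid and arguably cleaner proof of Proposition~\ref{pr-target} in isolation, but it would not substitute for the Lyapunov machinery in the rest of the paper.
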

\begin{proof}
Define
\begin{equation}\label{eqn-Ddef}
D(x)=\left[ \begin{array}{cc}
A \frac{\mathrm{e}^{-\mu x}}{\epsilon_1(x)} & 0
\\
0 & B \frac{\mathrm{e}^{\mu x}}{\epsilon_2(x)}
\end{array}\right],
\end{equation}
where $A,B,\mu>0$ will be computed later.
Select:
\begin{equation}\label{eqn-lyaplinear}
V_1= \int_0^1\gamma^T(x,t) D(x) \gamma(x,t)dx.
\end{equation}
Notice that $\sqrt{V_1}$ defines a norm equivalent to $\Vert \gamma(\cdot,t) \Vert_{L^2}$. Computing the derivative of $V_1$ and integrating by parts, we obtain
\begin{eqnarray}
\dot V_1&=&
 -\int_0^1\gamma^T(x,t) \left(D(x)\Sigma(x)\right)_x \gamma(x,t)dx
+\left[ \gamma^T(x,t) D(x)\Sigma(x) \gamma(x,t)\right]_0^1,\label{eqn-Udotlinear1}
\end{eqnarray}
where we have used that  $\Sigma(x)$ and $D(x)$ commute. Since
\begin{equation}
\left(D(x)\Sigma(x)\right)_x
=
\mu
\left[ \begin{array}{cc}
A \mathrm{e}^{-\mu x} & 0
\\
0 & B\mathrm{e}^{\mu x}
\end{array}\right]>0,
\end{equation}
and, on the other hand,
\begin{eqnarray}
\left[ \gamma^T(x,t) D(x)\Sigma(x) \gamma(x,t)\right]_0^1
=
-A \alpha^2(1,t)\mathrm{e}^{-\mu}
-(B-q^2A)\beta^2(0,t),
\end{eqnarray}
choosing $B=q^2 A+\lambda_2$, $A=\lambda_2\mathrm{e}^{\mu}$, and $\mu=\lambda_1\bar \epsilon$, where $\bar \epsilon= \max_{x\in[0,1]}\left\{\frac{1}{\epsilon_1(x)}, \frac{1}{\epsilon_2(x)} \right\}$, we get that $\left(D(x)\Sigma(x)\right)_x\geq \lambda_1 D(x)$, therefore:
\begin{equation}
\dot V_1 \leq  -\lambda_1 V_1 -\lambda_2 \left( \alpha^2(1,t)+ \beta^2(0,t) \right),\label{eqn-Udotlinear2}
\end{equation}
where $\lambda_1,\lambda_2>0$ can be chosen as large as desired. This shows exponential stability of the origin for the $\gamma$ system.

To show finite-time convergence to the origin, one can find the explicit solution of (\ref{eqn-gammalinear}) as follows. Define first
\begin{equation}\label{eqn-phi}
\phi_1(x)=\int_0^x \frac{1}{\epsilon_1(\xi)} d\xi,\quad \phi_2(x)=\int_0^x \frac{1}{\epsilon_2(\xi)} d\xi,
\end{equation}
noting that they are monotonically increasing functions of $x$, and thus invertible. Note that the components of $\gamma$ verify the differential equations
\begin{eqnarray}
\alpha_t&=&-\epsilon_1(x) \alpha_x,\label{eqn-alpha}\\
\beta_t&=&\epsilon_2(x) \beta_x ,\label{eqn-beta}
\end{eqnarray}
which can be rewritten as follows
\begin{eqnarray}
\frac{\partial}{\partial t} \alpha(\phi_1^{-1}(x),t)+\frac{\partial}{\partial x} \alpha(\phi_1^{-1}(x),t)&=&0,\label{eqn-alpha1}\\
\frac{\partial}{ \partial t} \beta(\phi_2^{-1}(x),t)-\frac{\partial}{\partial x} \beta(\phi_2^{-1}(x),t)&=&0.\label{eqn-beta1}
\end{eqnarray}
The solution of these equations is $\alpha(x,t)=F_\alpha(\phi_1(x)-t)$ and $\beta(x,t)=F_\beta(\phi_2(x)+t)$, where $F_\alpha$ and $F_\beta$ are arbitrary functions.
Now, if $\alpha_0(x)$, $\beta_0(x)$ are the initial condition for the states, one obtains $F_\alpha(x)=\alpha_0(\phi_1^{-1}(x))$ (valid for $0<x<\phi_1(1)$) and $F_\beta(x)=\beta_0(\phi_2^{-1}(x))$ (valid for $0<x<\phi_2(1)$). Using the boundary conditions (\ref{eqn-bc1beta}) one finds the remaining values of $F_\alpha$ and $F_\beta$, and thus the solution of the system, as follows:
\begin{eqnarray}\label{eqn-expalpha}
\alpha(x,t)&=&
\left\{
\begin{array}{cl}
\alpha_0\left(\phi_1^{-1}\left(\phi_1(x)-t\right)\right)  &      t\leq \phi_1(x),\\
q\beta(0,t-\phi_1(x))  &     t\geq \phi_1(x),
\end{array}
\right.
\\
\beta(x,t)&
=&
\left\{
\begin{array}{cc}
\beta_0\left(\phi_2^{-1}\left(\phi_2(x)+t\right)\right)  &     t\leq \phi_2(1)-\phi_2(x),\\
0  &     t\geq \phi_2(1)-\phi_2(x),
\end{array}
\right.\label{eqn-expbeta}
\end{eqnarray}
Thus, after $t=t_F$, where
\begin{equation}
t_F=\phi_1(1)+\phi_2(1)=\int_0^1 \left(\frac{1}{\epsilon_1(\xi)}+\frac{1}{\epsilon_2(\xi)} \right) d\xi,
\end{equation}
one has that $\alpha\equiv\beta\equiv0$.
\end{proof}

\subsection{Backstepping transformation and kernel equations}
To map the original system (\ref{eqn-wlinear}) into the target system (\ref{eqn-gammalinear}), we use the following transformation:

\begin{equation}
\gamma= w-\int_0^x K(x,\xi)  w(\xi,t)d\xi, \label{eqn-tran}
\end{equation}
where
\begin{eqnarray}
K(x,\xi)&=&
\left(
\begin{array}{cc}
K^{uu}(x,\xi)  & K^{uv}(x,\xi)     \\
K^{vu}(x,\xi)  & K^{vv}(x,\xi)
\end{array}
\right)
\end{eqnarray}
is a matrix of kernels. Defining
\begin{eqnarray}
Q_0&=&
\left(
\begin{array}{cc}
0  & q     \\
0 & 1
\end{array}
\right),
\end{eqnarray}
the original and target boundary conditions (respectively (\ref{eqn-bculinear}) and (\ref{eqn-bc1beta})) can be written compactly (omitting dependences in $x$ and $t$) as
\begin{equation}
 w(0,t)= Q_0 w(0,t),\quad w(1,t) =\left( \begin{array}{c} 0 \\ U \end{array} \right),\quad
\gamma(0,t)=Q_0  \gamma(0,t),\quad  \gamma (1,t) ={0}.
\end{equation}
Introducing (\ref{eqn-tran}) into (\ref{eqn-gammalinear}), applying (\ref{eqn-wlinear}), integrating by parts and using the boundary conditions, we obtain that the original system (\ref{eqn-wlinear}) is mapped into the target system (\ref{eqn-gammalinear}) if and only if one has the following three matrix equations:
\begin{eqnarray}
0&=&C(x)+\Sigma(x) K(x,x)-K(x,x)\Sigma(x),\label{eqn-K1}\\
0&=&\Sigma(x) K_x(x,\xi)+K_\xi(x,\xi) \Sigma(\xi)+K(x,\xi)\Sigma'(\xi) -K(x,\xi) C(\xi),\label{eqn-K2}\\
0&=&K(x,0)\Sigma(0) Q_0.\label{eqn-K3}
\end{eqnarray}
Expanding (\ref{eqn-K1}), we get the following kernel equations:
\begin{eqnarray} \label{eqn-kuu}
\epsilon_1(x) K_x^{uu}+\epsilon_1(\xi) K_\xi^{uu}&=&-\epsilon_1'(\xi) K^{uu}
-c_2(\xi) K^{uv},\quad\\
\epsilon_1(x) K_x^{uv}-\epsilon_2(\xi) K_\xi^{uv}&=&\epsilon_2'(\xi) K^{uv}
-c_1(\xi) K^{uu} ,\,\,\\
\epsilon_2(x) K_x^{vu}-\epsilon_1(\xi) K_\xi^{vu}&=&
\epsilon_1'(\xi) K^{vu}
+c_2(\xi) K^{vv},\,\,\\
\epsilon_2(x) K_x^{vv}+\epsilon_2(\xi) K_\xi^{vv}&=&-\epsilon_2'(\xi) K^{vv}
+c_1(\xi) K^{vu},\,\,\label{eqn-kvv}
\end{eqnarray}
with boundary conditions obtained from (\ref{eqn-K2})--(\ref{eqn-K3})
\begin{eqnarray}
 K^{uu}(x,0)&=&\frac{\epsilon_2(0)}{q\epsilon_1(0)}K^{uv}(x,0),
 \label{eqn-bc1}\\
K^{uv}(x,x)&=&\frac{c_1(x)}{\epsilon_1(x)+\epsilon_2(x)}, \label{eqn-bc2}\\
K^{vu}(x,x)&=&-\frac{c_2(x)}{\epsilon_1(x)+\epsilon_2(x)},
 \label{eqn-bc3}\\
  K^{vv}(x,0)&=&\frac{q\epsilon_1(0)}{\epsilon_2(0)}K^{vu}(x,0). \label{eqn-bc4}
\end{eqnarray}

The equations evolve in the triangular domain $\mathcal T=\{(x,\xi):0\leq\xi \leq x \leq 1\}$. Notice that they can be written as two separate $2\times2$ hyperbolic systems, one for $K^{uu}$ and $K^{uv}$ and another for $K^{vu}$ and $K^{vv}$.

By Theorem~\ref{th-wp} (see the Appendix), one finds that, for $q\neq 0$, under the assumption that $c_1(x),c_2(x)$ are $\mathcal C([0,1])$, $\epsilon_1(x),\epsilon_2(x)$ are $\mathcal C^1([0,1])$ and that $\epsilon_1(x),\epsilon_2(x)>0$, there is a unique solution to (\ref{eqn-kuu})--(\ref{eqn-bc4}), which is in $\mathcal C(\mathcal T)$.

\subsection{The inverse transformation}
To study the invertibility of transformation (\ref{eqn-tran}), we look for a transformation of the  the target system  (\ref{eqn-gammalinear}) into the original system  (\ref{eqn-wlinear}) as follows:
\begin{eqnarray}
 w(x,t)&=&\gamma(x,t)+\int_0^x L(x,\xi) \gamma(\xi,t) d\xi,\label{eqn-traninv}
\end{eqnarray}
where
\begin{eqnarray}
L(x,\xi)&=&
\left(
\begin{array}{cc}
L^{\alpha\alpha}(x,\xi)  & L^{\alpha\beta}(x,\xi)     \\
L^{\beta\alpha}(x,\xi)  & L^{\beta\beta}(x,\xi)
\end{array}
\right).
\end{eqnarray}
Introducing (\ref{eqn-traninv}) into (\ref{eqn-wlinear}), applying (\ref{eqn-gammalinear}), integrating by parts and using the boundary conditions, we obtain as before a set of kernel equations:
\begin{eqnarray}
\epsilon_1(x) L_x^{\alpha\alpha}+\epsilon_1(\xi) L_\xi^{\alpha\alpha}
&=&
-\epsilon_1'(\xi) L^{\alpha\alpha}
+c_1(x) L^{\beta\alpha},\quad\\
\epsilon_1(x) L_x^{\alpha\beta}-\epsilon_2(\xi) L_\xi^{\alpha
  \beta}
  &=&
  \epsilon_2'(\xi) L^{\alpha
  \beta}
+c_1(x) L^{\beta\beta},
\\
\epsilon_2(x) L_x^{\beta\alpha}-\epsilon_1(\xi) L_\xi^{\beta\alpha}
&=&
\epsilon_1'(\xi) L^{\beta\alpha}
-c_2(x) L^{\alpha\alpha},
\\
  \epsilon_2(x) L_x^{\beta\beta}+\epsilon_2(\xi) L_\xi^{\beta\beta}
  &=&
  -\epsilon_2'(\xi) L^{\beta\beta}
-c_2(x) L^{\alpha\beta}
,\end{eqnarray}
with boundary conditions
\begin{eqnarray}
L^{\alpha\alpha}(x,0)&=&\frac{\epsilon_2(0)}{q\epsilon_1(0)}L^{\alpha\beta}(x,0),\\
L^{\alpha\beta}(x,x)  &=&\frac{c_1(x)}{\epsilon_1(x)+\epsilon_2(x)},\\
L^{\beta\alpha}(x,x) &=&-\frac{c_2(x)}{\epsilon_1(x)+\epsilon_2(x)},\\
L^{\beta\beta}(x,0)&=&
\frac{q\epsilon_1(0)}{\epsilon_2(0)}L^{\beta\alpha}(x,0).
\end{eqnarray}

Again by Theorem~\ref{th-wp} (see the Appendix), one finds that there is a unique solution to these equations, which is $\mathcal C(\mathcal T)$.

\subsection{Control law and main result}
From the transformation (\ref{eqn-tran}) evaluated at $x=1$, one gets
\begin{eqnarray}\label{eqn-linearcontrol}
U=\int_0^1 K^{vu}(1,\xi) u(\xi,t) d\xi
+\int_0^1 K^{vv}(1,\xi) v(\xi,t) d\xi.
\end{eqnarray}
With control law (\ref{eqn-linearcontrol}) the following result holds.
\begin{theorem}\label{th-control}
Consider system (\ref{eqn-wlinear}) with boundary conditions (\ref{eqn-bculinear}), control law (\ref{eqn-linearcontrol}), and initial condition $w_0\in L^2([0,1])$. Then, for every $\lambda>0$, there exists $c>0$ such that
\begin{equation}\label{eqn-l2normth}
\Vert w(\cdot,t) \Vert_{L^2} \leq c\, \mathrm{e}^{-\lambda t} \Vert w_0 \Vert_{L^2}.
\end{equation}
In fact, the equilibrium $w\equiv 0$ is reached in finite time $t=t_F$, where $t_F$ is given by (\ref{eqn-tF}).
\end{theorem}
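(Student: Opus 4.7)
The plan is to reduce Theorem~\ref{th-control} to Proposition~\ref{pr-target} via the backstepping transformation (\ref{eqn-tran}) and its inverse (\ref{eqn-traninv}). First I would observe that evaluating the direct transformation at $x=1$ gives
\begin{equation*}
\beta(1,t)=v(1,t)-\int_0^1 K^{vu}(1,\xi)u(\xi,t)\,d\xi-\int_0^1 K^{vv}(1,\xi)v(\xi,t)\,d\xi,
\end{equation*}
so the choice of $U(t)$ in (\ref{eqn-linearcontrol}) is exactly the one that enforces $\beta(1,t)=0$. Together with the fact that the kernel matrix $K(x,\xi)$ satisfies (\ref{eqn-K1})--(\ref{eqn-K3}), which were derived precisely so that (\ref{eqn-tran}) converts (\ref{eqn-wlinear}) into (\ref{eqn-gammalinear}), and with the boundary relation $K(x,0)\Sigma(0)Q_0=0$ ensuring that $\alpha(0,t)=q\beta(0,t)$, this shows that $\gamma$ is a solution of the target system (\ref{eqn-gammalinear}), (\ref{eqn-bc1beta}).

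Next I would invoke Proposition~\ref{pr-target} to conclude that for every $\lambda>0$ there exists $c_\gamma>0$ such that $\|\gamma(\cdot,t)\|_{L^2}\leq c_\gamma e^{-\lambda t}\|\gamma_0\|_{L^2}$, and that $\gamma\equiv 0$ after $t=t_F$ given by (\ref{eqn-tF}). The remaining task is to translate this decay back to $w$. Since $K\in\mathcal C(\mathcal T)$ by Theorem~\ref{th-wp}, the operator $w\mapsto w-\int_0^x K(x,\xi)w(\xi,\cdot)\,d\xi$ is bounded on $L^2([0,1])$ by a constant, say $M_K$, obtained from $\sup_{\mathcal T}|K|$ via Cauchy--Schwarz. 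Similarly, using the inverse kernel $L\in\mathcal C(\mathcal T)$ produced in Section~3.3, the operator in (\ref{eqn-traninv}) is bounded on $L^2([0,1])$ by some $M_L$. Hence
\begin{equation*}
\|\gamma(\cdot,t)\|_{L^2}\leq M_K\|w(\cdot,t)\|_{L^2},\qquad \|w(\cdot,t)\|_{L^2}\leq M_L\|\gamma(\cdot,t)\|_{L^2},
\end{equation*}
so the two norms are equivalent.

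Combining these inequalities with the decay of $\gamma$ gives
\begin{equation*}
\|w(\cdot,t)\|_{L^2}\leq M_L\|\gamma(\cdot,t)\|_{L^2}\leq M_L c_\gamma e^{-\lambda t}\|\gamma_0\|_{L^2}\leq M_L M_K c_\gamma e^{-\lambda t}\|w_0\|_{L^2},
\end{equation*}
which is (\ref{eqn-l2normth}) with $c=M_LM_Kc_\gamma$. Finite-time convergence at $t=t_F$ follows immediately: $\gamma(\cdot,t)\equiv 0$ for $t\geq t_F$ by Proposition~\ref{pr-target}, and then (\ref{eqn-traninv}) yields $w(\cdot,t)\equiv 0$. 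I do not expect any genuine obstacle: the heavy lifting has been done when constructing $K$ and $L$ and when proving Proposition~\ref{pr-target}; the only detail requiring care is to check that classical solutions of the $w$-system and the $\gamma$-system correspond bijectively under (\ref{eqn-tran}) for $L^2$ data, which is a direct consequence of the boundedness of the two Volterra operators and the identities (\ref{eqn-K1})--(\ref{eqn-K3}) together with their inverse analogues.
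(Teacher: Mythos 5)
Your proposal is correct and follows essentially the same route as the paper: the control law enforces $\beta(1,t)=0$, the kernel equations guarantee that (\ref{eqn-tran}) maps the closed-loop system onto the target system, Proposition~\ref{pr-target} gives the decay and finite-time convergence of $\gamma$, and the $L^2$-boundedness of the direct and inverse Volterra operators (with continuous kernels $K$ and $L$) transfers both properties back to $w$. Your write-up is in fact slightly more explicit than the paper's about the norm equivalence constants, but the argument is the same.
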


\begin{proof}
Since the transformation (\ref{eqn-tran}) is invertible, when applying control law (\ref{eqn-linearcontrol}) the dynamical behavior of (\ref{eqn-wlinear}) is the same as the behavior of (\ref{eqn-gammalinear}), which is well-posed from standard results and whose explicit solution and stability properties we know from Proposition~\ref{pr-target}. Thus, we obtain the explicit solutions of $w$ from the direct and inverse transformation, as follows:
\begin{equation}
 w(x,t)=\gamma^*(x,t)+\int_0^x  L(x,\xi) \gamma^*(\xi,t)d\xi,
\end{equation}
where $\gamma^*(x,t)$ is the explicit solution of the $\alpha$, $\beta$ system, given by (\ref{eqn-expalpha})--(\ref{eqn-expbeta}), with initial conditions:
\begin{equation}
 \gamma_0(x)= w_0(x)-\int_0^x  K(x,\xi) w_0(\xi)d\xi.
\end{equation}
In particular, we know that $\gamma$ goes to zero in finite time $t=t_F$, therefore $w$ also shares that property.
Finally, since the origin of the $\gamma$ system is $L^2$ exponentially stable with an arbitrary large exponential decay rate, we conclude, using the inverse transformation, that the origin of the $w$ system is also $L^2$ exponentially stable with an arbitrary large exponential decay rate. Equation (\ref{eqn-l2normth})  follows by using the inverse and direct transformations to relate the $L^2$ norms of $w$ and $\gamma$ (using the fact that the kernels of the transformations are continuous, and thus bounded, functions).
\end{proof}
\subsection{The case $q=0$}\label{sect-q0}
 If the coefficient $q$ is zero in (\ref{eqn-bculinear}), the method presented in the paper is not valid since (\ref{eqn-bc1}) would require the value of one of the control kernels to be infinity at the boundary of the domain $\mathcal T$. Similarly, if the coefficient is close to zero one still gets very large values for the kernels close to the boundary, resulting in potentially large control laws.

The method can be modified to accommodate zero or small values of $q$ by setting a slightly different target system (\ref{eqn-alpha})--(\ref{eqn-beta}), as follows:
 \begin{eqnarray}
\alpha_t&=&-\epsilon_1(x) \alpha_x+g(x) \beta(0,t),\label{eqn-alphaq0}\\
\beta_t&=&\epsilon_2(x) \beta_x ,\label{eqn-betaq0}
\end{eqnarray}
where $g(x)$ is to be obtained from the method; regardless of the value of $g(x)$, this is a cascade system
which is still $L^2$ exponentially stable and converges in finite time by the same arguments of Proposition~\ref{pr-target}, since now, using the same Lyapunov function $V_1$ defined in (\ref{eqn-lyaplinear}), we obtain
\begin{eqnarray}
\dot V_1&=&
 -\int_0^1\gamma^T(x,t) \left(D(x)\Sigma(x)\right)_x \gamma(x,t)dx
+\left[ \gamma^T(x,t) D(x)\Sigma(x) \gamma(x,t)\right]_0^1
\nonumber \\ &&
+2 \beta(0,t) \int_0^1\alpha(x,t) A \frac{\mathrm{e}^{-\mu x}}{\epsilon_1(x)} g(x)  dx
,\label{eqn-Udotlinearq0}
\end{eqnarray}
The new term (which is the last one) can be controlled by slightly modifying the coefficients of $D(x)$ in the proof of Proposition~\ref{pr-target}, obtaining the same result as before.

The kernel equations resulting from the transformation are still the same (\ref{eqn-kuu})--(\ref{eqn-kvv}), with the same boundary conditions (\ref{eqn-bc2})--(\ref{eqn-bc4}) for $K^{uv}$, $K^{vu}$, and $K^{vv}$ (which reduces to $K^{vv}(x,0)=0$ when $q=0$), but one obtains an \emph{undetermined} boundary conditions for $K^{uu}$:
\begin{eqnarray}
K^{uu}(x,0)&=&h(x),\label{eqn-bc1q0}
\end{eqnarray}
where $h(x)$ can be chosen as desired; by choosing at least a continuous function, one can apply Theorem~\ref{th-wp} and thus the kernel equations are well-posed. After $h(x)$ has been chosen and the kernels have been computed, one obtains the value of $g(x)$ as
\begin{equation}
g(x)=q\epsilon_1(0)h(x)-\epsilon_2(0)K^{uv}(x,0).
\end{equation}

Invertibility of the transformation follows as before, thus one obtains the same result of Theorem~\ref{th-control}. The non-uniqueness in (\ref{eqn-bc1q0}) gives the designer some freedom in shaping the input function $g(x)$ from $\beta$ to $\alpha$. Also note that this has no impact in the feedback law as the kernels $K^{vu}$ and $K^{vv}$ (which are the  ones appearing in (\ref{eqn-linearcontrol})) are uniquely defined and independent of the non-unique $K^{uu}$ and $K^{uv}$.

\section{Application of the linear backstepping controller to the nonlinear system}\label{sect-main}

We wish to show that the linear controller (\ref{eqn-linearcontrol}) designed using backstepping works  \emph{locally} for the nonlinear system, in terms that will be made precise.

For that, we write our quasilinear system (\ref{eqn-z}) in a form equivalent (up to linear terms) to (\ref{eqn-wlinear}). Define
\begin{eqnarray}
\varphi_1(x)&=&\mathrm{exp}\left(
\int_0^x \frac{f_{11}(s)}{\Lambda_1(s)} ds
 \right),  \quad
 \varphi_2(x)=\mathrm{exp}\left(
-\int_0^x \frac{f_{22}(s)}{\Lambda_2(s)} ds
 \right).
\end{eqnarray}
We obtain a new state variable $w$ from $z$ using the following transformation:
\begin{equation}
w(x,t)=\left[
\begin{array}{c}
u(x,t) \\ v(x,t) \end{array}
 \right]=\left[
\begin{array}{cc}
\varphi_1(x) & 0\\
0 & \varphi_2(x)
\end{array}
 \right] \left[
\begin{array}{c}
z_1(x,t)\\
z_2(x,t)
\end{array}
 \right]=\Phi(x) z(x,t),\label{eqn-zw}
 \end{equation}
 so that
 \begin{equation}
z(x,t)=\left[
\begin{array}{cc}
\dfrac{1}{\varphi_1(x)} & 0\\
0 & \dfrac{1}{\varphi_2(x) }
\end{array}
 \right] w(x,t)=\Phi^{-1}(x) w(x,t).
 \end{equation}
It follows that $w$ verifies the following equation:
\begin{equation}
w_t+\bar \Lambda(w,x)w_x+\bar f(w,x)=0,\label{eqn-w}
\end{equation}
where
\begin{eqnarray}
\bar \Lambda(w,x)&=&\Phi(x)\Lambda(\Phi^{-1}(x) w,x)\Phi^{-1}(x),\\
\bar f(w,x)&=&\Phi(x)f(\Phi^{-1}(x) w,x)
+\bar \Lambda(w,x)
\left[
\begin{array}{cc}
-\dfrac{f_{11}(x)}{\Lambda_1(x)} & 0\\
0 & \dfrac{f_{22}(x)}{\Lambda_2(x)}
\end{array}
 \right]
 w.
\end{eqnarray}
It is evident that $\bar \Lambda(0,x)=\Phi(x)\Lambda(0,x)\Phi^{-1}(x)=\Lambda(0,x)$ and that $\bar f(0,x)=0$. Also,
\begin{equation}
C(x)=-\left.\frac{\partial \bar f(w,x)}{\partial w}\right|_{w=0}
=
\left[
\begin{array}{cc}
0 & -f_{12}(x) \\
-f_{21}(x) & 0
\end{array}
 \right].\label{eqn-nonlinearC}
\end{equation}
Thus, it is possible to write (\ref{eqn-w}) as a linear system with the same structure as (\ref{eqn-wlinear}) plus nonlinear terms:
\begin{equation}
w_t-\Sigma(x)w_x-C(x)w+\Lambda_{NL}(w,x)w_x+f_{NL}(w,x)=0,\label{eqn-w2}
\end{equation}
where
\begin{equation}
\Sigma(x)=-\Lambda(0,x)
\label{eqn-nonlinearSigma},
\end{equation}
 and
\begin{eqnarray}
\Lambda_{NL}(w,x)&=&\bar \Lambda(w,x)+\Sigma(x),\quad
f_{NL}(w,x)=\bar f(w,x)+C(x)w.
\end{eqnarray}

Computing the boundary conditions of (\ref{eqn-w2}) by combining (\ref{eqn-bcz12}) with the transformation (\ref{eqn-zw}), and defining $q=\left.\frac{\partial G_0(v)}{\partial v}\right|_{v=0}$ and $G_{NL}(v)=G_0(v)-qv$, one obtains
\begin{equation}\label{eqn-bcu2}
u(0,t)=qv(0,t)+G_{NL}(v(0,t)),\,v(1,t)=\bar U(t),
\end{equation}
where $U(t)=\varphi_2(1)U(t)$. In what follows we will consider the case $q\neq0$; the case $q=0$ is analogous (see Remark~\ref{rem-qzero}).

Notice that the linear parts of (\ref{eqn-w2}) and (\ref{eqn-bcu2}) are identical to (\ref{eqn-wlinear}) and (\ref{eqn-bculinear}), and that the coefficients $C(x)$ and $\Sigma(x)$ verify the assumptions of Section~\ref{sect-linear}. Also, it is clear that the nonlinear terms verify $\Lambda_{NL}(0,x)=0$, $f_{NL}(0,x)=\frac{\partial f_{NL}}{\partial w}(0,x)=0$, and $G_{NL}(0)=\frac{\partial G_{NL}}{\partial w}(0)=0$

Therefore, we consider using the feedback law:
\begin{eqnarray}\label{eqn-nonlinearcontrol1}
\bar U=\int_0^1 K^{vu}(1,\xi) u(\xi,t) d\xi
+\int_0^1 K^{vv}(1,\xi) v(\xi,t) d\xi,
\end{eqnarray}
which implies, in terms of the original $z$ variable:
\begin{eqnarray}\label{eqn-nonlinearcontrol2}
U=\frac{1}{\varphi_2(1)} \left(\int_0^1 K^{vu}(1,\xi) z_1(\xi,t)\varphi_1(\xi) d\xi
+ \int_0^1 K^{vv}(1,\xi)z_2(\xi,t)\varphi_2(\xi) d\xi\right),
\end{eqnarray}
where the kernels are computed from  (\ref{eqn-kuu})--(\ref{eqn-bc4}) using the coefficients $C(x)$ and $\Sigma(x)$ from (\ref{eqn-nonlinearC}) and (\ref{eqn-nonlinearSigma}).

Next, we show that the control law (\ref{eqn-nonlinearcontrol2}), which is computed for the linear part of the system, asymptotically stabilizes the nonlinear system, although locally. However, the right space to prove stability of the closed-loop system is $H^2$, instead of the space $L^2$ that was used in Section~\ref{sect-linear} for the linear system.

Denoting:
\begin{eqnarray}
q_0=\left[\begin{array}{c} 1 \\ 0 \end{array} \right],G(z)=G_0(z_2),q_1=\left[\begin{array}{c} 0 \\ 1 \end{array} \right],k(x)= \left[\begin{array}{c} \frac{\varphi_1(x)K^{vu}(1,x)}{\varphi_2(1)} \\ \frac{\varphi_2(x)K^{vv}(1,x)}{\varphi_2(1)}  \end{array} \right],
\end{eqnarray}
the boundary conditions of the closed loop system would be written as:
\begin{equation}
q_0^T z(0,t)=G(z(0,t)),\,\,q_1^T z(1,t)=\int_0^1 k^T(\xi) z(\xi,t) d\xi.\label{eqn-bcz}
\end{equation}
A necessary condition for system (\ref{eqn-z}) with boundary conditions (\ref{eqn-bcz}) to be well-posed in the space $H^2$ is that the initial conditions verify the corresponding second-order compatibility condition. These are
\begin{eqnarray}\label{eqn-cc1}
0&=&G(z_0(0))-q_0^Tz_0(0),\\
0&=&\int_0^1 k^T(\xi) z_0(\xi) d\xi- q_1^Tz_0(1),\label{eqn-cc2}\\
0&=&G'(z_0(0))\left( \Lambda(z_0(0),0)z_0'(0)+f(z_0(0),0) \right)
\nonumber\\ && -q_0^T\left( \Lambda(z_0(0),0)z_0'(0)+f(z_0(0),0) \right),\label{eqn-cc3}\\
0&=&\int_0^1 k^T(\xi) \left( \Lambda(z_0(
\xi),\xi)z_0'(\xi)+f(z_0(\xi),\xi) \right)
d\xi\nonumber\\ && -
q_1^T\left( \Lambda(z_0(1),1)z_0'(1)+f(z_0(1),1)\right).\label{eqn-cc4}
\end{eqnarray}
While (\ref{eqn-cc1}) and (\ref{eqn-cc3}) are natural compatibility conditions, the conditions  (\ref{eqn-cc2}) and (\ref{eqn-cc4}) are artificial (since they show up due to the feedback law that has been designed) and rather stringent, as they require very specific values of the initial conditions. Thus, we modify our control law in a way that, without losing its stabilizing character, does not require any specific values in the initial values beyond the natural conditions (\ref{eqn-cc1}) and (\ref{eqn-cc3}). The modification in the boundary conditions consists in adding a dynamic extension to the controller as follows:
\begin{equation}
q_0^T z(0,t)=G(z(0,t)),\,\,q_1^T z(1,t)=\int_0^1 k^T(\xi) z(\xi,t) d\xi+a(t)+b(t),
\label{eqn-bcz2}
\end{equation}
where $a(t)$ is one of the states of the following system:
\begin{gather}
\label{syst-a-b}
\dot a =-d_1 a, \, \dot b=-d_2b,
\end{gather}
where the constants $d_1$ and $d_2$ can be chosen as desired with the only conditions that $d_1,d_2>0$ and $d_1\neq d_2$. It is evident that with positive values of  these constants, (\ref{syst-a-b}) is always stable. The initial conditions of $a(t)$ and $b(t)$ are an additional degree of freedom that can be used to eliminate the compatibility conditions (\ref{eqn-cc2}) and (\ref{eqn-cc4}). With the modification of the control law, these compatibility conditions are now
\begin{eqnarray}
0&=& \int_0^1 k^T(\xi) z_0(\xi) d\xi+a(0)+b(0)-q_1^Tz_0(1),\label{eqn-cc21}\\
0&=& \int_0^1 k^T(\xi) \left( \Lambda(z_0(
\xi),\xi)z_{0_x}(\xi)+f(z_0(\xi),\xi) \right)
d\xi-d_1 a(0)-d_2 b(0)
\nonumber \\ &&
-q_1^T\left( \Lambda(z_0(1),1)z_{0_x}(1)+f(z_0(1),1)\right).\label{eqn-cc41}
\end{eqnarray}
Call
\begin{eqnarray}
P_1(z_0)&=&q_1^Tz_0(1)-\int_0^1 k^T(\xi) z_0(\xi) d\xi,\\
P_2(z_0)&=&q_1^T\left( \Lambda(z_0(1),1)z_{0_x}(1)+f(z_0(1),1)\right)
\nonumber \\ &&
-\int_0^1 k^T(\xi) \left( \Lambda(z_0(
\xi),\xi)z_{0_x}(\xi)+f(z_0(\xi),\xi) \right)
d\xi.
\end{eqnarray}
Selecting
\begin{eqnarray}
a(0)&=&  -\frac{P_2(z_0)+d_2P_1(z_0)}{d_1-d_2},\quad
b(0)= \frac{d_1P_1(z_0)+P_2(z_0)}{d_1-d_2},\label{eqn-b0}
\end{eqnarray}
the compatibility conditions are automatically verified.

We are now ready to state our main result. Define the norms $\Vert z(\cdot,t) \Vert_{H^1}= \Vert z(\cdot,t) \Vert_{L^2}+\Vert z_x(\cdot,t) \Vert_{L^2}$ and $\Vert z(\cdot,t) \Vert_{H^2}= \Vert z(\cdot,t) \Vert_{H^1}+\Vert z_{xx}(\cdot,t) \Vert_{L^2}$.
\begin{theorem}\label{thm-main}
Consider system (\ref{eqn-z}) and (\ref{syst-a-b}) with boundary conditions (\ref{eqn-bcz2})
and initial conditions $z_0=[z_{0_1}\ z_{0_2}]^T \in H^2([0,1])$, and $a(0)$ and $b(0)$ verifying (\ref{eqn-b0}), with the kernels $K^{vu}$ and $K^{vv}$ obtained from (\ref{eqn-kuu})--(\ref{eqn-bc4}) where the coefficients $C(x)$ and $\Sigma(x)$ are computed from (\ref{eqn-nonlinearC}) and (\ref{eqn-nonlinearSigma}). Then, under the assumptions of smoothness for the coefficients stated in Section~\ref{sect-prob}, for every $\lambda>0$, there exist $\delta>0$ and $c>0$
such that such that, if $\Vert z_0 \Vert_{H^2} \leq \delta$ and if the compatibility conditions  (\ref{eqn-cc1}) and (\ref{eqn-cc3}) are verified,
then:
\begin{equation}
\Vert z(\cdot,t) \Vert_{H^2}^2+a^2(t)+b^2(t)\leq c\,\mathrm{e}^{-\lambda t}\left( \Vert z_0 \Vert_{H^2}^2+a^2(0)+b^2(0)\right).
\end{equation}
\end{theorem}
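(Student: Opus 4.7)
The plan is to combine the invertible change of variables $w = \Phi(x) z$ with the backstepping transformation (\ref{eqn-tran}) to reformulate the closed-loop system (\ref{eqn-z}), (\ref{eqn-bcz2}) as a nonlinear perturbation of the target system (\ref{eqn-gammalinear}) on the variable $\gamma = (\alpha,\beta)^T$, then to build a strict Lyapunov functional equivalent to the $H^2$ norm on this perturbed target system (augmented by the extension states $a, b$), and finally to transfer the decay back to $z$ using continuity in $H^2$ of the two backstepping maps and of $\Phi$. This adapts the methodology of~\cite{coron2} to the target-system coordinates produced by backstepping rather than working directly on $z$.

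\textbf{Target system and well-posedness.} Applying (\ref{eqn-zw}) and then (\ref{eqn-tran}) to the closed-loop equations, a direct (if lengthy) computation shows that $\gamma$ satisfies
\begin{equation*}
\gamma_t = \Sigma(x)\gamma_x + \mathcal{N}_1(\gamma,x)\gamma_x + \mathcal{N}_2(\gamma,x),
\end{equation*}
with $\mathcal{N}_1(0,x)=0$, $\mathcal{N}_2(0,x)=\partial_\gamma\mathcal{N}_2(0,x)=0$, together with boundary data
\begin{equation*}
\alpha(0,t) = q\beta(0,t) + \tilde{G}(\beta(0,t)), \qquad \beta(1,t) = a(t) + b(t),
\end{equation*}
where $\tilde{G}$ vanishes to second order at the origin. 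The $(a+b)$ correction at $x=1$ is precisely what makes the artificial conditions (\ref{eqn-cc21})--(\ref{eqn-cc41}) solvable by the choice (\ref{eqn-b0}). Local well-posedness in $C^0([0,T);H^2(0,1))$ for sufficiently small initial data then follows from standard quasilinear hyperbolic theory along characteristics, and the trajectory extends as long as an $H^2$ bound is maintained.

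\textbf{Strict $H^2$ Lyapunov functional.} For the target system, set
\begin{equation*}
V := V_0(\gamma) + V_1(\gamma_t) + V_2(\gamma_{tt}) + \rho_1 a^2 + \rho_2 b^2,
\end{equation*}
where each $V_i$ has the form (\ref{eqn-lyaplinear}) with weight matrix $D(x)$ as in (\ref{eqn-Ddef}) (with $A,B,\mu$ chosen level by level). Using the target PDE to express $\gamma_x, \gamma_{xx}$ in terms of $\gamma_t, \gamma_{tt}$ modulo terms quadratic in $\gamma$, one shows that for $\|\gamma\|_{H^2}$ small $V$ is equivalent to $\|\gamma\|_{H^2}^2 + a^2 + b^2$. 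Differentiating along trajectories and integrating by parts reproduces, via the calculation that led to (\ref{eqn-Udotlinear2}), a strictly dissipative linear part plus boundary traces at $x=0,1$, plus a cubic remainder $\mathcal{R}$ arising from $\mathcal{N}_1, \mathcal{N}_2, \tilde{G}$ and the $(a+b)$ correction. The 1D Sobolev embedding $H^1(0,1)\hookrightarrow L^\infty(0,1)$, combined with Cauchy--Schwarz and with $\rho_1,\rho_2$ chosen large enough to absorb the cross terms produced by $\beta(1,t)=a+b$, yields $|\mathcal{R}|\le C V^{3/2}$ and therefore
\begin{equation*}
\dot V \le -\lambda V + C V^{3/2}.
\end{equation*}
A standard bootstrap, valid whenever $\|z_0\|_{H^2}\le\delta$ is small enough, then gives $V(t)\le V(0) e^{-\lambda t/2}$ for all $t\ge 0$.

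\textbf{Return to $z$ and main obstacle.} Since $K, L, \Phi$ and their inverses are $C^1$ on their domains, the compositions $z\mapsto\gamma$ and $\gamma\mapsto z$ are bounded on $H^2(0,1)$, so exponential decay of $V$ transfers directly to $\|z(\cdot,t)\|_{H^2}^2 + a^2(t) + b^2(t)$. The principal technical obstacle is the $H^2$ dissipation step itself: because the dynamics are quasilinear, differentiating in $x$ twice to control $\gamma_{xx}$ generates top-order nonlinear terms that cannot be cleanly absorbed by integration by parts. The remedy, already built into the definition of $V$ above, is to differentiate in $t$ instead and to use the PDE to recover spatial derivatives; the hard part is then the rigorous tracking of the boundary traces of $\gamma_t(0,\cdot), \gamma_{tt}(0,\cdot)$ (determined by the nonlinear boundary relation at $x=0$) and of $\gamma_t(1,\cdot), \gamma_{tt}(1,\cdot)$ (determined by $\dot a+\dot b$), and the verification that every resulting cubic term remains strictly subordinate to the quadratic dissipative terms for $H^2$-small initial data.
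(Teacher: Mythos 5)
Your overall architecture is the paper's: pass to $\gamma$-coordinates via $\Phi$ and the backstepping map, build a Lyapunov functional out of weighted $L^2$ energies of $\gamma$, $\gamma_t$ and $\gamma_{tt}$ together with $a^2, b^2$, prove $\dot V\le -\lambda V + C V^{3/2}$, and transfer back by equivalence of $H^2$ norms. You also correctly identify the reason for differentiating in $t$ rather than $x$. However, there is a genuine gap in the central dissipation estimate, and it is not a detail to be ``tracked rigorously''; it is the main technical idea of the proof.

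The problem is your choice of weight. You propose to take every level of the Lyapunov functional with the \emph{same fixed diagonal weight} $D(x)$ of (\ref{eqn-Ddef}). That weight is adapted to the diagonal transport matrix $\Sigma(x)$ of the $\gamma$-equation, and indeed works for $V_1(\gamma)$. But the equations satisfied by $\eta=\gamma_t$ and $\theta=\gamma_{tt}$ have transport matrix $\Sigma(x)-F_1[\gamma]$, where $F_1[\gamma]=\Lambda_{NL}(\mathcal L[\gamma],x)$ is a full $2\times2$ matrix that does not commute with $D(x)$. After integrating by parts, the term
\begin{equation*}
\int_0^1 \theta^T D(x)\bigl(\Sigma(x)-F_1[\gamma]\bigr)\theta_x\,dx
\end{equation*}
leaves behind the antisymmetric part of $D F_1$, producing an unremovable cross term of the form $\int_0^1 \theta^T A[\gamma]\,\theta_x\,dx$ with $|A[\gamma]|\lesssim \|\gamma\|_\infty$. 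At the $\eta$-level this can be traded for $\|\gamma\|_\infty\,\|\eta\|_{L^2}\|\eta_x\|_{L^2}$, which is still acceptable because $\|\eta_x\|_{L^2}\sim\|\theta\|_{L^2}$ is controlled. But at the top $\theta$-level the analogous term involves $\|\theta_x\|_{L^2}\sim\|\gamma_{xtt}\|_{L^2}$, a \emph{third}-order spatial quantity that is not controlled by $V$, so the claimed bound $|\mathcal R|\le C V^{3/2}$ fails there. This is a genuine loss of derivatives, not a bookkeeping issue.

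The paper's remedy is Lemma~\ref{lem-lyapeta}: replace $D(x)$ by a \emph{state-dependent} symmetric positive weight $R[\gamma]=D(x)+\Theta[\gamma]$, where $\Theta[\gamma]$ is an off-diagonal correction of size $O(\|\gamma\|_\infty)$ chosen so that $R[\gamma]\bigl(\Sigma(x)-F_1[\gamma]\bigr)$ is symmetric, i.e. (\ref{eqn-symm}) holds. With this weight the integration by parts in $\dot V_2$ and $\dot V_3$ produces only a boundary term and a quadratic form in $\eta$ (resp. $\theta$) with coefficient $\bigl(R[\gamma](\Sigma-F_1)\bigr)_x$, which is estimated by (\ref{eqn-Thetax}); the only new price is the term $\int \theta^T (R[\gamma])_t\theta$, which is controlled via (\ref{eqn-Thetat}). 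Without this symmetrizing weight, the proof as you have outlined it does not close. Everything else in your sketch — the structure of the nonlinear boundary conditions, the role of $(a,b)$ in satisfying the artificial compatibility conditions, the Sobolev-embedding absorptions, and the transfer back to $z$ — matches the paper.
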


\section{Proof of Theorem~\ref{thm-main}}\label{sect-proof}
\subsection{Preliminary definitions}
We first establish some definitions and notation. For $\gamma(x)\in\mathbb R^2$  with components $\alpha(x)$ and $\beta(x)$ denote $\vert \gamma(x) \vert =\vert \alpha(x) \vert + \vert \beta(x) \vert$, and
\begin{equation}
\Vert \gamma \Vert_{\infty} = \sup_{x\in[0,1]} \vert \gamma(x) \vert,\quad
\Vert \gamma \Vert_{L^1}= \int_0^1 \vert \gamma(\xi) \vert d\xi.
\end{equation}
In what follows, for a time-varying vector $\gamma(x,t)$, we denote $\vert \gamma \vert =\vert \gamma(x,t) \vert$ and $\Vert \gamma \Vert=\Vert \gamma(\cdot,t) \Vert$ to simplify our notation.
For a $2 \times 2$ matrix $M$, denote:
\begin{equation}
\vert M \vert = \max \{ \vert M\gamma \vert; \gamma \in\mathbb R^2,\vert \gamma \vert =1\}.
\end{equation}

For the kernel matrices $K(x,\xi)$ and $L(x,\xi)$ denote
\begin{equation}
\Vert K \Vert_{\infty}=\sup_{(x,\xi)\in\mathcal T} \vert K(x,\xi)\vert.
\end{equation}
For $\gamma\in H^2([0,1])$, recall the following well-known inequalities, that will be used later:
\begin{eqnarray}
\Vert \gamma \Vert_{L^1} &\leq& C_1 \Vert \gamma \Vert_{L^2} \leq C_2 \Vert \gamma \Vert_{\infty},\\
\Vert \gamma \Vert_{\infty} &\leq& C_3\left[ \Vert \gamma \Vert_{L^2}+\Vert \gamma_x \Vert_{L^2}\right]\leq C_4 \Vert \gamma \Vert_{H^1},\label{eqn-sobolev1}\\
\Vert \gamma_x \Vert_{\infty} &\leq& C_5\left[ \Vert \gamma_x \Vert_{L^2}+\Vert \gamma_{xx} \Vert_{L^2}\right]\leq C_6 \Vert \gamma \Vert_{H^2}.\quad\,\,\label{eqn-sobolev2}
\end{eqnarray}

 Define the following linear functionals, the first two of which are, respectively, the inverse and direct transformations (\ref{eqn-tran}) and (\ref{eqn-traninv}):
\begin{eqnarray}\label{eqn-calK}
\mathcal K[\gamma](x)
&=&\gamma(x,t)-\int_0^x K(x,\xi) \gamma(\xi,t) d\xi,\\
\mathcal L[\gamma](x) \label{eqn-calL}
&=&\gamma(x,t)+\int_0^x L(x,\xi) \gamma(\xi,t) d\xi,\\
\mathcal K_{1}[\gamma](x)
&=&-K(x,x)\gamma(x,t)+\int_0^x K_{\xi}(x,\xi) \gamma(\xi,t) d\xi,\qquad\\
\mathcal K_{2}[\gamma](x)
&=&-K(x,x)\gamma(x,t)-\int_0^x K_{x}(x,\xi) \gamma(\xi,t) d\xi,\qquad\\
\mathcal L_{1}[\gamma](x)
&=&L(x,x)\gamma(x,t)+\int_0^x L_x(x,\xi) \gamma(\xi,t) d\xi,\qquad\\
\mathcal L_{11}[\gamma](x)
&=&(L_x(x,x)+L_\xi(x,x))\gamma(x,t)+L_x(x,x)\gamma(x,t)
\nonumber \\ &&
+\int_0^x L_{xx}(x,\xi) \gamma(\xi,t) d\xi.\label{eqn-calLx}
\end{eqnarray}
For simplicity, in what follows we drop writing the $x$ dependence in functionals and the $t$ dependence in the variables.

Using (\ref{eqn-calK}) and (\ref{eqn-calL}), we define $F_1[\gamma]$ and $F_2[\gamma]$ as:
\begin{eqnarray}\label{eqn-F1}
F_1&=&\Lambda_{NL}\left(\mathcal L[\gamma],x\right),\quad
F_2=f_{NL}\left(\mathcal L[\gamma],x\right).
\end{eqnarray}

To prove Theorem~\ref{thm-main}, we notice that if we apply the (invertible) backstepping transformation (\ref{eqn-tran}) to the nonlinear system (\ref{eqn-w2}) we obtain the following transformed system:
\begin{eqnarray}
0&=&\gamma_t-\Sigma(x)  \gamma_x
+\Lambda_{NL}(w,x)w_x+f_{NL}(w,x)
\nonumber \\ &&
+\int_0^x K(x,\xi) \left(\Lambda_{NL}(w,\xi)w_x(\xi)
-f_{NL}(w,\xi) \right) d\xi,\quad\label{eqn-gammaNL}
\end{eqnarray}
and using the inverse transformation (\ref{eqn-traninv}) the equation can be expressed fully in terms of $\gamma$ as:
\begin{eqnarray}\label{eqn-gammaNL2}
\gamma_t-\Sigma(x)  \gamma_x+F_3[\gamma,\gamma_x]+F_4[\gamma]=0,
\end{eqnarray}
where the functionals $F_3$ and $F_4$ are defined as
\begin{eqnarray}
F_3&=&\mathcal K\left[ F_1[\gamma]\gamma_x\right],
\label{eqn-F3}
\\
F_4&=&\mathcal K\left[ F_1[\gamma] \mathcal L_{1} \left[\gamma \right] +F_2[\gamma]\right].\label{eqn-F4}\qquad\,
\end{eqnarray}
The boundary conditions are
\begin{equation}\label{eqn-gammabc}
 \alpha(0,t)=q  \beta(0,t)+G_{NL}(\beta(0,t)),\,\  \beta (1,t) =a(t)+b(t).
\end{equation}
By the assumptions on the coefficients and applying Theorem~\ref{th-sm}, the direct and inverse transformations (\ref{eqn-tran}) and (\ref{eqn-traninv}) have kernels that are $\mathcal C^2 (\mathcal T)$ functions. Differentiating twice with respect to $x$ in these transformations, it can be shown that the $H^2$ norm of $\gamma$ is equivalent to the $H^2$ norm of $z$ (see for instance~\cite{vazquez-coron}). Thus, if we show $H^2$ local stability of the origin for (\ref{eqn-gammaNL2})--(\ref{eqn-gammabc}), the same holds for $z$.

We proceed by analyzing (using a Lyapunov function) the growth of $\Vert \gamma\Vert_{L^2}$, $\Vert \gamma_t\Vert_{L^2}$ and $\Vert \gamma_{tt}\Vert_{L^2}$. Relating these norms with $\Vert \gamma \Vert_{H^2}$, we then prove $H^2$ local stability for $\gamma$.

\subsection{Analyzing the growth of $\Vert \gamma\Vert_{L^2}$}
Define
\begin{equation}
V_1= \int_0^1\gamma^T(x,t) D(x) \gamma(x,t)dx,
\end{equation}
for $D(x)$ as in (\ref{eqn-Ddef}). Proceeding analogously to (\ref{eqn-Udotlinear1})--(\ref{eqn-Udotlinear2}), we get some extra nonlinear terms:
\begin{eqnarray}
\dot V_1 &= & -\int_0^1\gamma^T(x,t) \left(D(x)\Sigma(x)\right)_x \gamma(x,t)dx
+\left[ \gamma^T(x,t) D(x)\Sigma(x) \gamma(x,t)\right]_0^1
\nonumber \\ &&
- 2  \int_0^1\gamma^T(x,t) D(x)\left(F_3[\gamma,\gamma_x]+ F_4[\gamma] \right) dx
.
\end{eqnarray}
Let us analyze first the last term:
\begin{equation}
 2 \left| \int_0^1\gamma^T(x,t) D(x)\left(F_3[\gamma,\gamma_x]+ F_4[\gamma] \right)  dx \right|
\leq
K_1 \int_0^1 \vert \gamma \vert \left( \vert F_3[\gamma,\gamma_x]\vert
+  \vert F_4[\gamma] \vert\right) dx.
\end{equation}

Applying Lemma~\ref{lem-F3F4bound} (see the Appendix), we obtain that there exists a $\delta_1$, such that for $\Vert \gamma \Vert_{\infty} < \delta_1$,
\begin{eqnarray}
\int_0^1 \vert \gamma \vert \vert F_3[\gamma,\gamma_x] \vert dx &\leq& K_2 \Vert \gamma_x \Vert_{\infty}\Vert \gamma \Vert^2_{L^2},\quad\\
 \int_0^1 \vert \gamma \vert \vert F_4[\gamma] \vert dx &\leq& K_3 \Vert \gamma \Vert_{\infty} \Vert \gamma \Vert_{L^2}^2,
\end{eqnarray}
and using inequality (\ref{eqn-sobolev1}) and noting that $\Vert \gamma \Vert_{L^2} \leq K_4 V_1^{1/2}$, we obtain
\begin{eqnarray}
\int_0^1 \vert \gamma \vert \vert F_3[\gamma,\gamma_x] \vert dx &\leq& K_5 \Vert \gamma_x \Vert_{\infty}V_1,\quad\\
 \int_0^1 \vert \gamma \vert \vert F_4[\gamma] \vert dx &\leq& K_6  \Vert \gamma_x \Vert_{\infty}V_1+K_7 V_1^{3/2}.
\end{eqnarray}
Now,
\begin{eqnarray}
\left[ \gamma^T(x,t) D(x)\Sigma(x) \gamma(x,t)\right]_0^1
&=&B a^2(t)\mathrm{e}^{\mu}
-A \alpha^2(1,t)\mathrm{e}^{-\mu}
-B \beta^2(0,t)
\nonumber \\ &&
+A(q\beta(0,t)+G_{NL}(\beta(0,t)))^2,
\end{eqnarray}
and for $\Vert \gamma \Vert_{\infty} < \delta_1$, $\vert G_{NL}(\beta(0,t))\vert \leq K_8 \vert \beta(0,t)\vert $, and $A>0$, we obtain
\begin{eqnarray}
\left[ \gamma^T(x,t) D(x)\Sigma(x) \gamma(x,t)\right]_0^1
&\leq &
-A \alpha^2(1,t)\mathrm{e}^{-\mu}
+(A(\vert q\vert +K_8)^2-B) \beta^2(0,t)
\nonumber \\ &&
+B a^2(t)\mathrm{e}^{\mu}
\end{eqnarray}
Thus,
choosing $B=(\vert q\vert +K_8)^2 A+\lambda_2$ and $A$ and $\mu$ as in the proof of Proposition~\ref{pr-target}, we obtain the following proposition:
\begin{proposition}\label{pr-gamma}
There exists $\delta_1$ such that if $\Vert \gamma \Vert_{\infty} < \delta_1$ then
\begin{eqnarray}
\dot V_1 &\leq&  -\lambda_1 V_1 -\lambda_2 \left( \alpha^2(1,t)+ \beta^2(0,t) \right)
+ C_1 V_1^{3/2}  +C_2
\Vert \gamma_x \Vert_{\infty}
V_1
\nonumber \\ &&
+C_3 (a^2(t)+b^2(t)),
\end{eqnarray}
where $\lambda_1$, $\lambda_2$, $C_1$, $C_2$ and $C_3$ are positive constants.
\end{proposition}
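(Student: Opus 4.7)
My plan is to adapt the Lyapunov computation that already proved Proposition~\ref{pr-target}, treating the nonlinear source terms $F_3[\gamma,\gamma_x]+F_4[\gamma]$ in (\ref{eqn-gammaNL2}) and the nonlinear boundary perturbation $G_{NL}(\beta(0,t))$ in (\ref{eqn-gammabc}) as controllable errors. Differentiating $V_1$ along trajectories and integrating by parts (using that $D$ and $\Sigma$ commute), I obtain three pieces: the interior quadratic term $-\int_0^1\gamma^T(D\Sigma)_x\gamma\,dx$, the boundary term $[\gamma^TD\Sigma\gamma]_0^1$, and the nonlinear volume term $-2\int_0^1\gamma^TD(F_3+F_4)\,dx$.

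For the interior term I reuse the choices of $A$, $B$, $\mu$ from the proof of Proposition~\ref{pr-target}, giving $-\int_0^1\gamma^T(D\Sigma)_x\gamma\,dx\leq -\lambda_1 V_1$ for any prescribed $\lambda_1>0$. For the boundary term, the novelty compared with Proposition~\ref{pr-target} is that $\beta(1,t)=a(t)+b(t)\neq 0$ and $\alpha(0,t)=q\beta(0,t)+G_{NL}(\beta(0,t))$. Since $D\Sigma=\mathrm{diag}(-A\mathrm{e}^{-\mu x},B\mathrm{e}^{\mu x})$, evaluating at $x=0,1$ produces
\begin{equation*}
[\gamma^TD\Sigma\gamma]_0^1
= -A\mathrm{e}^{-\mu}\alpha^2(1,t)+B\mathrm{e}^{\mu}(a(t)+b(t))^2+A(q\beta(0,t)+G_{NL}(\beta(0,t)))^2-B\beta^2(0,t).
\end{equation*}
Because $G_{NL}$ vanishes to second order at the origin, for $\|\gamma\|_\infty<\delta_1$ one has $|G_{NL}(\beta(0,t))|\leq K_8|\beta(0,t)|$, so after expanding the square and using $(a+b)^2\leq 2(a^2+b^2)$ the boundary term is dominated by $-A\mathrm{e}^{-\mu}\alpha^2(1,t)+(A(|q|+K_8)^2-B)\beta^2(0,t)+C_3(a^2(t)+b^2(t))$. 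Choosing $B=(|q|+K_8)^2 A+\lambda_2$ (rather than $B=q^2A+\lambda_2$ as in Proposition~\ref{pr-target}) and keeping $A,\mu$ as before absorbs the coefficient of $\beta^2(0,t)$ into $-\lambda_2\beta^2(0,t)$.

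For the nonlinear volume term I apply Lemma~\ref{lem-F3F4bound} to obtain, for $\|\gamma\|_\infty<\delta_1$, the pointwise-in-$t$ bounds $\int_0^1|\gamma||F_3|\,dx\leq K_2\|\gamma_x\|_\infty\|\gamma\|_{L^2}^2$ and $\int_0^1|\gamma||F_4|\,dx\leq K_3\|\gamma\|_\infty\|\gamma\|_{L^2}^2$. Using the equivalence $\|\gamma\|_{L^2}\leq K_4 V_1^{1/2}$ and the Sobolev embedding $\|\gamma\|_\infty\leq C_3(\|\gamma\|_{L^2}+\|\gamma_x\|_{L^2})$, these turn into contributions of order $\|\gamma_x\|_\infty V_1$ and $V_1^{3/2}+\|\gamma_x\|_\infty V_1$, respectively; note the $V_1^{3/2}$ arises from the $\|\gamma\|_{L^2}$ piece of $\|\gamma\|_\infty$. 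Adding the three groups of estimates yields the claimed inequality.

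The only real adjustment relative to the proof of Proposition~\ref{pr-target} is the bookkeeping of the extra boundary contribution from $G_{NL}$, which forces a slightly larger choice of $B$; this is the step where the smallness assumption $\|\gamma\|_\infty<\delta_1$ enters, and it poses no genuine obstacle because $G_{NL}$ is quadratic at the origin and because the nonhomogeneous boundary value $\beta(1,t)=a(t)+b(t)$ contributes cleanly as $C_3(a^2(t)+b^2(t))$. Everything else is a direct transcription of the linear Lyapunov argument together with the estimates supplied by Lemma~\ref{lem-F3F4bound}.
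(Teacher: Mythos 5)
Your proposal is correct and follows essentially the same route as the paper: the same three-way decomposition of $\dot V_1$, the same application of Lemma~\ref{lem-F3F4bound} together with the Sobolev embedding and $\Vert\gamma\Vert_{L^2}\leq K_4V_1^{1/2}$ for the volume term, and the same modified choice $B=(\vert q\vert+K_8)^2A+\lambda_2$ to absorb the $G_{NL}$ contribution at the boundary. Your explicit handling of $\beta(1,t)=a(t)+b(t)$ via $(a+b)^2\leq 2(a^2+b^2)$ is in fact slightly more careful than the paper's displayed boundary computation, which writes only $Ba^2(t)\mathrm{e}^{\mu}$ there.
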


\subsection{Analyzing the growth of $\Vert \gamma_t\Vert_{L^2}$}
Define $\eta=\gamma_t$. Notice that the norms of $\eta$ and $\gamma_x$ are related (see Lemma~\ref{lem-gammaetaequiv} in the Appendix). Taking a partial derivative in $t$ in (\ref{eqn-gammaNL}) we obtain an equation for $\eta$ as follows:
\begin{eqnarray}\label{eqn-etaNL2}
 \eta_t+\left(F_1[\gamma]-\Sigma(x)\right)  \eta_x
+F_5[\gamma,\gamma_x,\eta]+F_6[\gamma,\eta]=0,
\end{eqnarray}
where $F_5$ and $F_6$ are defined as
\begin{eqnarray}
F_5&=& \mathcal K_{1} \left[ F_1[\gamma]\eta \right]
+ \int_0^x K(x,\xi) F_{12}[\gamma,\gamma_x]\eta(\xi) d\xi
+K(x,0) \Lambda_{NL}\left(\gamma(0),0\right) \eta(0)
\nonumber \\ &&
 +\mathcal  K \left[ F_{11}[\gamma,\eta]\gamma_x\right]
,\label{eqn-F5}
\\
F_6&=&\mathcal  K\left[F_{11}[\gamma,\eta] \mathcal  L_x \left[\gamma\right] \right]
+\mathcal  K\left[F_{1}[\gamma]\mathcal  L_x \left[\eta\right]\right]
+\mathcal K[F_{21} [\gamma,\eta]],\label{eqn-F6}
\end{eqnarray}
where
\begin{eqnarray}
F_{11}&=&
\frac{\partial \Lambda_{NL}}{\partial \gamma} \left(\mathcal  L[\gamma],x\right)\mathcal  L[\eta],\quad\,\,\,
\\
F_{12}&=&
\frac{\partial \Lambda_{NL}}{\partial \gamma} \left(\mathcal  L[\gamma],x\right) \left(\gamma_x+ \mathcal  L_x[ \gamma] \right)
+\frac{\partial \Lambda_{NL}}{\partial x} \left(\mathcal  L[\gamma],x\right) ,\quad\,\,\,
\\
F_{21}&=&\frac{\partial f_{NL}}{\partial \gamma} \left(\mathcal  L[\gamma],x\right)L[\eta].\quad
\end{eqnarray}
The boundary conditions for $\eta=[\eta_1 \ \eta_2]^T$ are
\begin{equation}
 \eta_1(0,t)=q  \eta_2(0,t)
 +G'_{NL}(\beta(0,t))\eta_2(0,t)
 ,\,\  \eta_2 (1,t) =-d_1a(t)-d_2 b(t).
\end{equation}

To find a Lyapunov function for $\eta$, we use the next lemma:
\begin{lemma}\label{lem-lyapeta}
There exists $\delta>0$ such that, for $\Vert \gamma \Vert_{\infty}<\delta$, there exists a symmetric matrix $R[\gamma]>0$ verifying the identity:
\begin{equation}
R[\gamma] \left(\Sigma(x)-F_1[\gamma]\right)- \left(\Sigma(x)-F_1[\gamma]\right)^T R[\gamma]=0,\label{eqn-symm}
\end{equation}
and the following bounds:
\begin{eqnarray}
R[\gamma](x)&\leq&c_1+c_2 \Vert \gamma \Vert_{\infty},\label{eqn-Rpos2}\\
\vert \left( \left(R[\gamma]-D(x)\right) \Sigma(x)\right)_x \vert&\leq& c_2 \Vert \gamma \Vert_{\infty}  \left(1 +   \Vert \gamma _x \Vert_\infty  \right),\quad\label{eqn-Thetax}\\
\vert \left( R[\gamma] \right)_t \vert &\leq& c_3 \left(\vert \eta\vert+  \Vert \eta \Vert_{L^1}   \right),\label{eqn-Thetat}
\end{eqnarray}
where $c_1,c_2,c_3$ are positive constants.
\end{lemma}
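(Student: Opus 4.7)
The plan is to construct $R[\gamma]$ by \emph{symmetrizing the transport matrix} $M[\gamma] := \Sigma(x) - F_1[\gamma]$ via a smooth diagonalization. At $\gamma = 0$ we have $F_1[0] = \Lambda_{NL}(0,\cdot) = 0$, so $M[0] = \Sigma(x)$ is diagonal with its two eigenvalues $-\Lambda_1(x)$ and $-\Lambda_2(x)$ of opposite signs and hence separated uniformly on $[0,1]$. By standard perturbation theory applied to the characteristic polynomial (implicit function theorem), for $\Vert \gamma \Vert_{\infty}$ smaller than some $\delta > 0$ the matrix $M[\gamma](x)$ still has two smooth distinct real eigenvalues and admits a smooth diagonalization $M[\gamma] = P \Lambda_d P^{-1}$, where $P$ depends smoothly on $(\mathcal L[\gamma](x), x)$ and $P|_{\gamma = 0} = I$. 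I then define
\begin{equation*}
R[\gamma](x) := (P^{-1})^T D(x) P^{-1},
\end{equation*}
where $D(x)$ is the diagonal weight from \eqref{eqn-Ddef}, so in particular $R[0] = D$.

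Symmetry and positive definiteness of $R$ are immediate since $D > 0$ and $P^{-1}$ is invertible. For the identity \eqref{eqn-symm}, compute $R[\gamma] M[\gamma] = (P^{-1})^T D P^{-1} \cdot P \Lambda_d P^{-1} = (P^{-1})^T (D \Lambda_d) P^{-1}$; since $D \Lambda_d$ is a product of diagonal matrices it is symmetric, so $RM$ is symmetric, which is exactly $RM = M^T R$. For \eqref{eqn-Rpos2}, the map $\gamma \mapsto R[\gamma] - D(x)$ is smooth and vanishes at $\gamma = 0$, so combining $|\mathcal L[\gamma](x)| \leq C \Vert \gamma \Vert_{\infty}$ with Taylor's inequality yields $|R[\gamma](x)| \leq |D(x)| + c_2 \Vert \gamma \Vert_{\infty}$.

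For \eqref{eqn-Thetax}, the crucial structural fact is that $R[\gamma] - D$ vanishes at $\gamma = 0$, so it is itself $O(\Vert \gamma \Vert_{\infty})$. Its $x$-derivative has two contributions via the chain rule: the partial in $x$ at fixed first argument, whose deviation from $D_x$ is again $O(\Vert \gamma \Vert_{\infty})$ by smoothness, and the term through $\partial_x \mathcal L[\gamma] = \mathcal L_1[\gamma]$, which is bounded by $c \Vert \gamma \Vert_{\infty}$ because the kernel $L$ is bounded. Multiplying by the bounded $\Sigma$ (and adding $(R-D)\Sigma_x$) gives $|((R-D)\Sigma)_x| \leq c \Vert \gamma \Vert_{\infty}$, which is majorized by the asserted $c_2 \Vert \gamma \Vert_{\infty}(1 + \Vert \gamma_x \Vert_{\infty})$. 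For \eqref{eqn-Thetat}, $R$ depends on $t$ only through $\mathcal L[\gamma](x,t)$, so $R_t = R_w \cdot \mathcal L[\eta]$; using $|\mathcal L[\eta](x)| \leq |\eta(x)| + \Vert L \Vert_{\infty} \Vert \eta \Vert_{L^1}$ together with boundedness of $R_w$ delivers \eqref{eqn-Thetat}.

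The main obstacle is the rigorous justification of the smooth simultaneous diagonalization of $M[\gamma](x)$: one needs both eigenvalues to remain distinct uniformly on $[0,1]$ and the eigenvectors to depend smoothly on the (nonlocal) argument $\mathcal L[\gamma]$. This rests on the spectral gap of $\Sigma(x) = -\Lambda(0,x)$ (which is uniform thanks to \eqref{eqn-speeds}) and on the $\mathcal C^2$ smoothness of $\Lambda_{NL}$ inherited from the standing assumptions of Section~\ref{sect-prob}. Once the diagonalization is in hand, identity \eqref{eqn-symm} is automatic and the three quantitative estimates reduce to careful chain-rule bookkeeping that tracks the order of vanishing in $\gamma$ through $\mathcal L$ and its $x$- and $t$-derivatives.
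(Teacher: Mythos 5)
Your construction is genuinely different from the paper's. The paper does not diagonalize $M[\gamma]=\Sigma(x)-F_1[\gamma]$; it makes the explicit ansatz $R[\gamma]=D(x)+\Theta[\gamma]$ with $\Theta$ purely off-diagonal, and solves the single scalar equation coming from \eqref{eqn-symm} in closed form, $\psi[\gamma]=\bigl(D_{11}(F_1)_{12}-D_{22}(F_1)_{21}\bigr)/\bigl(\epsilon_1+\epsilon_2+(F_1)_{11}-(F_1)_{22}\bigr)$. That buys complete explicitness: the only non-degeneracy required is that this scalar denominator stay above $\tfrac12\min_x(\epsilon_1+\epsilon_2)$ for small $\Vert\gamma\Vert_\infty$, and all three bounds then follow from the estimates on $F_1$, $F_{11}$, $F_{12}$ in Lemmas~\ref{lem-F3F4bound} and~\ref{lem-F5F6bound}. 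Your route through a smooth eigenbasis is viable for a $2\times2$ matrix with a uniform spectral gap and is more conceptual (it would extend to $n\times n$ with simple real eigenvalues), but it obliges you to justify a smooth, globally normalized eigenvector selection that the closed-form ansatz avoids; note also that your $R$ is a \emph{different} symmetrizer from the paper's (its diagonal part is not $D$ in general), which is harmless since only \eqref{eqn-symm} and the three bounds are used downstream.

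There is, however, a concrete error in your verification of \eqref{eqn-Thetax}: you assert $\partial_x\mathcal L[\gamma]=\mathcal L_1[\gamma]$, but from \eqref{eqn-calL} one has $\partial_x\mathcal L[\gamma](x)=\gamma_x(x)+L(x,x)\gamma(x)+\int_0^xL_x(x,\xi)\gamma(\xi)\,d\xi=\gamma_x(x)+\mathcal L_1[\gamma](x)$. The dropped term $\gamma_x$ is multiplied by the $w$-derivative of $R-D$, which is only $O(1)$ near $w=0$ (that $R-D$ vanishes at $\gamma=0$ does not make its $w$-derivative vanish there), so this contribution is of size $O(\Vert\gamma_x\Vert_\infty)$ and cannot be absorbed into $\Vert\gamma\Vert_\infty\Vert\gamma_x\Vert_\infty$. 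What your construction actually yields is $\vert((R-D)\Sigma)_x\vert\le c(\Vert\gamma\Vert_\infty+\Vert\gamma_x\Vert_\infty)$, not $c\Vert\gamma\Vert_\infty$. This weaker estimate is in fact all that is used downstream (see the bound $K_1\Vert\eta\Vert_{L^2}^2(\Vert\gamma\Vert_\infty+\Vert\gamma_x\Vert_\infty)$ in \eqref{eqn-Vdott1}), so the stability argument still closes, but your claimed intermediate bound is false as stated and the literal inequality \eqref{eqn-Thetax} does not follow from it. The verifications of \eqref{eqn-symm}, \eqref{eqn-Rpos2} and \eqref{eqn-Thetat} are correct.
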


\begin{proof}
We explicitly construct $R[\gamma]$ as
\begin{equation}
\label{eqn-Rdef}
R[\gamma]=D(x)+\Theta[\gamma],
\end{equation}
with
\begin{equation}
\Theta[\gamma]=\left[
\begin{array}{cc}
0 & \psi[\gamma]
 \\
 \psi[\gamma]
&0
\end{array}
 \right],\label{eqn-Thetadef}
\end{equation}
where $\psi[\gamma]$ is defined as:
\begin{equation}
\psi[\gamma]=\frac{D_{11}(x)\left(F_1[\gamma]\right)_{12}-D_{22}(x)\left(F_1[\gamma]
\right)_{21}}{\epsilon_2(x)+\epsilon_1(x)+\left(F_1[\gamma]\right)_{11}-\left(F_1[\gamma]\right)_{22}},\label{eqn-psidef}
\end{equation}
where $\left(F_1[\gamma]\right)_{ij}$ denotes the coefficient in row $i$ and column $j$ in the matrix $F_1[\gamma]$.
Identity (\ref{eqn-symm}) follows by using the construction of $R[\gamma](x)$ in (\ref{eqn-Rdef})--(\ref{eqn-psidef}), and the fact that $D(x)$ and $\Sigma(x)$ are diagonal and commute.
To ensure that the denominator of (\ref{eqn-psidef}) is different from zero, denote $K_1=\min_{x\in[0,1]} \left(\epsilon_1(x)+\epsilon_2(x)\right)>0$. Applying (\ref{eqn-F1bound}) from Lemma~\ref{lem-F3F4bound}, there exists $\delta_1$ for which, if $\Vert \gamma \Vert_{\infty}<\delta_1$, one gets:
\begin{equation}
\epsilon_2(x)+\epsilon_1(x)+\left(F_1[\gamma]\right)_{22}-\left(F_1[\gamma]\right)_{11}
\geq K_1 - K_2 \Vert \gamma \Vert_{\infty},
\end{equation}
thus if $\Vert \gamma \Vert_{\infty} \leq  \min\{\delta_1,\delta_2\}$ with $\delta_2=\frac{ K_1}{2K_2} $, we obtain
\begin{equation}
\epsilon_2(x)+\epsilon_1(x)+\left(F_1[\gamma]\right)_{22}-\left(F_1[\gamma]\right)_{11}
\geq \frac{K_1}{2},\label{dem-bound}
\end{equation}
thus $\psi[\gamma]$ is well-defined. Applying again (\ref{eqn-F1bound}) in the numerator of (\ref{eqn-psidef}) to bound (\ref{eqn-Thetadef}), we obtain:
\begin{equation}
\Vert \Theta[\gamma] \Vert \leq K_3 \Vert \gamma \Vert_{\infty},\label{eqn-thetabound}
\end{equation}
and noting $K_4=\Vert D \Vert_{\infty}$, we obtain directly the bound (\ref{eqn-Rpos2}), and by choosing $\Vert \gamma \Vert_{\infty} \leq \min\{\delta_1,\delta_2,\delta_3\}$, with $\delta_3= \frac{K_4}{2K_3}$, we show $R[\gamma]>0$.

Inequality (\ref{eqn-Thetax}) is equivalent to showing:
\begin{equation}
\vert \left(\Theta[\gamma] \Sigma(x)\right)_x \vert \leq c_2 \Vert \gamma \Vert_{\infty}  \left(1 + \Vert \gamma _x \Vert_\infty  \right).
\end{equation}
We first use (\ref{eqn-thetabound}) to bound $\vert  \Theta[\gamma](x) \Sigma_x(x) \vert$, and for $\vert  \Theta_x[\gamma](x) \Sigma(x) \vert$ we take a derivative in $x$ in (\ref{eqn-psidef}), use the bound (\ref{dem-bound}) and use the fact that $\frac{\partial}{\partial x} F_1[\gamma]= F_{12}[\gamma,\gamma_x]$ and using Lemma~\ref{lem-F5F6bound}, there exists $\delta_4$ such that if $\Vert \gamma \Vert_{\infty} \leq \delta_4$,
\begin{eqnarray}
\vert F_{12}[\gamma,\gamma_x] \vert &\leq& K_1 \left( \Vert \gamma \Vert_{\infty}+\Vert \gamma_x \Vert_{\infty}\right).
\end{eqnarray}
To show (\ref{eqn-Thetat}) we use $\frac{\partial}{\partial t} F_1[\gamma]=
\vert F_{11}[\gamma,\eta]$ and apply  Lemma~\ref{lem-F5F6bound}.
Setting $\delta=\min\{\delta_1,\delta_2,\delta_3,\delta_4\}$ the lemma follows.
\end{proof}

Define:
\begin{equation}
V_2= \int_0^1\eta^T(x,t) R[\gamma](x) \eta(x,t)dx.
\end{equation}
Computing $\dot V_2$, applying Lemma~\ref{lem-lyapeta}, and integrating by parts, we find
\begin{eqnarray}
\dot V_2
&=&
-\int_0^1\eta^T(x,t)\left( R[\gamma]\left(\Sigma(x) -F_1[\gamma] \right)\right)_x \eta(x,t)dx
\nonumber \\ &&
+\left[\eta^T(x,t)R[\gamma](x)\left(\Sigma(x) -F_1[\gamma](x) \right) \eta(x,t) \right]_{x=0}^{x=1}
+\int_0^1\eta^T(x,t) \left(R[\gamma]\right)_t \eta(x,t)dx
\nonumber \\ &&
-2\int_0^1\eta^T(x,t) R[\gamma]F_5[\gamma,\gamma_x,\eta,\eta_x,]dx
-2\int_0^1\eta^T(x,t) R[\gamma]F_6[\gamma,\eta] dx.\label{eqn-Vdot}
\end{eqnarray}
The first three terms of (\ref{eqn-Vdot}) are analyzed using Lemma~\ref{lem-lyapeta}. Thus, there exists $\delta_1$ such that, for $\Vert \gamma \Vert_{\infty}<\delta$, we find, for the first term:
\begin{eqnarray}
&&
-\int_0^1\eta^T(x,t)\left( R[\gamma]\left(\Sigma(x) -F_1[\gamma] \right)\right)_x \eta(x,t)dx
\nonumber \\
&\leq &
-\lambda_1 V_2
+K_1 \Vert \eta \Vert_{L^2}^2\left( \Vert \gamma \Vert_{\infty} + \Vert \gamma _x \Vert_\infty\right). \quad
\,\,\label{eqn-Vdott1}
\end{eqnarray}
The second term of  (\ref{eqn-Vdot}) is bounded using the boundary conditions, (\ref{eqn-Udotlinear1})--(\ref{eqn-Udotlinear2}), and Lemma~\ref{lem-lyapeta}, as:
\begin{eqnarray}
&&
\left[\eta^T(x,t)R[\gamma](x)\left(\Sigma(x) -F_1[\gamma](x) \right) \eta(x,t) \right]_{x=0}^{x=1}
\nonumber \\
&\leq&
-\lambda_2 \left( \eta_1^2(1,t)+ \eta_2^2(0,t) \right)
+
K_2 \Vert \gamma \Vert_{\infty} \left( \eta_2^2(0,t) +\eta_1^2(1,t)\right)
\nonumber \\ &&
+K_3(1+ \Vert \gamma \Vert_{\infty})(a(t)^2+b(t)^2).
\end{eqnarray}
Finally, we bound the third term of (\ref{eqn-Vdot}) applying Lemma~\ref{lem-lyapeta} as follows:
\begin{eqnarray}
\int_0^1\eta^T(x,t) \left(R[\gamma]\right)_t \eta(x,t)dx
&\leq& K_3\int_0^1 \vert \eta\vert^2   \left(\vert \eta\vert+  \Vert \eta \Vert_{L^1}   \right) dx
\leq  K_4\Vert \eta \Vert^2_{L^2} \Vert \eta \Vert_{\infty}.\qquad\label{eqn-Vdott2}
\end{eqnarray}

Applying Lemmas~\ref{lem-lyapeta} and~\ref{lem-F5F6bound} to the last terms of (\ref{eqn-Vdot}), we get, for $\Vert \gamma \Vert_{\infty} < \delta$,
\begin{eqnarray}
&&
2 \left|\int_0^1\eta^T(x,t) R[\gamma]F_5[\gamma,\gamma_x,\eta,\eta_x]dx \right|
\leq
K_5 \int_0^1 \vert \eta \vert \vert F_5[\gamma,\eta] \vert dx
\nonumber \\
&\leq&
K_6 \Vert \eta \Vert^2_{L^2} \left( \Vert \gamma \Vert_{\infty}+\Vert \gamma_x \Vert_{\infty}\right)
+K_{7} \Vert \eta \Vert_{L^2}\vert \eta(0,t)\vert \vert \gamma(0,t) \vert ,
\end{eqnarray}
and
\begin{eqnarray}
&&
2 \left|\int_0^1\eta^T(x,t) R[\gamma]F_6[\gamma,\eta]dx \right|
\leq
K_{8} \int_0^1 \vert \eta \vert \vert F_6[\gamma,\eta] \vert dx
\leq K_{9}
\Vert \eta \Vert^2_{L^2} \Vert \gamma \Vert_{\infty}.
\end{eqnarray}

Thus, it is clear that by choosing $\Vert \gamma \Vert_{\infty} $ small enough, using Lemma~\ref{lem-gammaetaequiv} to bound $\Vert \gamma_x \Vert_{\infty}$ by $\Vert \eta \Vert_{\infty}$, and noting $\Vert \eta \Vert_{L^2} \leq K_{10} V^{1/2}$, we obtain the following proposition:
\begin{proposition}\label{pr-gammat}
There exists $\delta_2$ such that if $\Vert \gamma \Vert_{\infty} < \delta_2$
\begin{eqnarray}
\dot V_2 &\leq&
-\lambda_3 V_2-\lambda_4 \left( \eta_1^2(1,t)+ \eta_2^2(0,t) \right)
+K_1V_2 \Vert \eta  \Vert_{\infty}+K_2b(t)^2,\quad\,\,\,
\end{eqnarray}
for $\lambda_2,\lambda_3,K_1,K_2$ positive constants.
\end{proposition}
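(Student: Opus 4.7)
The plan is to differentiate the weighted $L^2$ functional $V_2 = \int_0^1 \eta^T R[\gamma](x)\eta\,dx$ along the $\eta$-dynamics~(\ref{eqn-etaNL2}), where $R[\gamma]$ is the symmetric positive matrix supplied by Lemma~\ref{lem-lyapeta}. The crucial structural point is the identity~(\ref{eqn-symm}): because $\Sigma(x)-F_1[\gamma]$ is no longer pointwise diagonal in the quasilinear setting, a fixed weight $D(x)$ would leave uncontrolled cross terms when we integrate by parts. Identity~(\ref{eqn-symm}) is precisely what is needed to make $R(\Sigma-F_1)$ symmetric, so that after substituting $\eta_t=(\Sigma-F_1)\eta_x-F_5-F_6$ and integrating the transport term by parts, the pair of cross contributions collapses into the clean single $x$-derivative in~(\ref{eqn-Vdot}).

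Next I would decompose the resulting expression as in~(\ref{eqn-Vdot}) into an interior dissipation term, a boundary quadratic form, a time-drift term $\int \eta^T R_t \eta$, and the two functional terms $\int \eta^T R F_5$ and $\int \eta^T R F_6$. For each piece, I split $R=D+\Theta$ (with $\Theta$ of order $\|\gamma\|_\infty$ by~(\ref{eqn-Rpos2})) and treat the $D$-part by repeating the linear computation of Proposition~\ref{pr-target}, while the $\Theta$-part and the $F_1$-contribution are absorbed into small corrections using the quantitative bound~(\ref{eqn-Thetax}). This gives the interior term an upper bound $-\lambda_1 V_2 + K\|\eta\|_{L^2}^2(\|\gamma\|_\infty + \|\gamma_x\|_\infty)$.

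For the boundary term I use the boundary conditions $\eta_2(1,t)=-d_1 a(t)-d_2 b(t)$ and $\eta_1(0,t)=(q+G'_{NL}(\beta(0)))\eta_2(0,t)$; choosing the weights $A,B,\mu$ as in Proposition~\ref{pr-target} but with $B$ enlarged by $(|q|+K_8)^2 A$ to swallow the $G'_{NL}$ correction (the exact same device used for $V_1$), the diagonal $D\Sigma$ contribution produces $-\lambda_2(\eta_1^2(1,t)+\eta_2^2(0,t))$ plus a term $O(a^2+b^2)$. The time-drift term $\int \eta^T R_t \eta$ is bounded by $K\|\eta\|_{L^2}^2\|\eta\|_\infty$ directly from~(\ref{eqn-Thetat}). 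The $F_5$ and $F_6$ contributions are handled by Lemma~\ref{lem-F5F6bound}, which yields factors of $\|\gamma\|_\infty + \|\gamma_x\|_\infty$ or $\|\gamma\|_\infty$ multiplying $\|\eta\|_{L^2}^2$, together with a single trace product $\|\eta\|_{L^2}|\eta(0,t)||\gamma(0,t)|$ arising from the $K(x,0)\Lambda_{NL}(\gamma(0),0)\eta(0)$ piece of $F_5$; this is absorbed using Young's inequality into a small fraction of the dissipation $\lambda_2 \eta_2^2(0,t)$.

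To conclude, I would apply Lemma~\ref{lem-gammaetaequiv} to trade $\|\gamma_x\|_\infty$ for a constant multiple of $\|\eta\|_\infty$, recognise $\|\eta\|_{L^2}^2\le K V_2$, and then pick $\delta_2$ small enough so that all $\|\gamma\|_\infty$-multiplied error pieces are dominated by half of $\lambda_1 V_2$ and half of the boundary dissipation. The main obstacle I anticipate is controlling the off-diagonal quasilinear contribution to the boundary form: because $R[\gamma]$ is no longer diagonal, evaluating $\eta^T R \Sigma \eta$ at $x=0,1$ produces cross products of $\eta_1$ and $\eta_2$ with coefficient $\psi[\gamma]$ from~(\ref{eqn-psidef}), and showing that these remain subordinate to the diagonal $-\lambda_2(\eta_1^2(1,t)+\eta_2^2(0,t))$ dissipation is where the explicit smallness of $\Theta[\gamma]$ provided by Lemma~\ref{lem-lyapeta} becomes indispensable.
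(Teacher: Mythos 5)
Your proposal is correct and follows essentially the same route as the paper: the same functional $V_2=\int_0^1\eta^T R[\gamma]\eta\,dx$, the same decomposition into interior, boundary, $R_t$, $F_5$ and $F_6$ pieces as in (\ref{eqn-Vdot}), the same use of Lemmas~\ref{lem-lyapeta}, \ref{lem-F5F6bound} and \ref{lem-gammaetaequiv}, and the same absorption of the $\|\gamma\|_\infty$-weighted boundary and trace terms into the dissipation for $\delta_2$ small. The only refinement you add is making explicit how the cross term $\|\eta\|_{L^2}|\eta(0,t)||\gamma(0,t)|$ is absorbed via Young's inequality, which the paper leaves implicit.
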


\subsection{Analyzing the growth of $\Vert \gamma_{tt}\Vert_{L^2}$}
Define $\theta=\eta_t$. Notice that the norms of $\theta$ and $\eta_x$ are related (see Lemma~\ref{lem-thetaetaequiv} in the Appendix). Taking a partial derivative in $t$ in (\ref{eqn-etaNL2}) we obtain an equation for $\theta$:
\begin{eqnarray}
&&
\theta_t+\left(F_1[\gamma]-\Sigma(x)\right)  \theta_x
+F_7[\gamma,\gamma_x,\eta,\eta_x,\theta]+F_8[\gamma,\eta,\theta]=0,\label{eqn-thetaNL2}
\end{eqnarray}
where $F_7$ and $F_8$ are defined as
\begin{eqnarray}
F_7&=&\mathcal K_{1} \left[ F_{11}[\gamma,\eta]\eta \right]+ \int_0^x K(x,\xi) F_{12}[\gamma,\gamma_x]\theta(\xi) d\xi
+ \mathcal K_{1} \left[ F_1[\gamma]\theta \right]
\nonumber \\ &&
+ \int_0^x K(x,\xi) F_{14}[\gamma,\gamma_x,\eta,\eta_x]\eta(\xi) d\xi
+K(x,0)\frac{\partial \Lambda_{NL}}{\partial \gamma}\left(\gamma(0),0\right)\eta(0) \eta(0)
\nonumber \\ &&
 +K(x,0) \Lambda_{NL}\left(\gamma(0),0\right) \theta(0)
 +\mathcal  K \left[ F_{11}[\gamma,\eta]\eta_x\right]
  +\mathcal  K \left[ F_{13}[\gamma,\eta,\theta]\gamma_x\right]
,\label{eqn-F7}
\\
F_8&=&2\mathcal  K\left[F_{11}[\gamma,\eta] \mathcal  L_x \left[\eta\right] \right]
+\mathcal  K\left[F_{1}[\gamma]\mathcal  L_x \left[\theta\right]\right]
+\mathcal  K\left[F_{13}[\gamma,\eta,\theta] \mathcal  L_x \left[\gamma\right] \right]
\nonumber \\ &&
+\mathcal K[F_{22} [\gamma,\eta,\theta]],\label{eqn-F8}
\end{eqnarray}
where
\begin{eqnarray}
F_{13}&=&
\frac{\partial \Lambda^2_{NL}}{\partial \gamma^2} \left(\mathcal L[\gamma],x\right) \mathcal L[\eta] \mathcal L[\eta]
+\frac{\partial \Lambda_{NL}}{\partial \gamma} \left(\mathcal L[\gamma],x\right) \mathcal L[\theta],\\
F_{14}&=&
\frac{\partial^2 \Lambda_{NL}}{\partial \gamma^2} \left(\mathcal L[\gamma],x\right) \mathcal L[\eta] \left(\gamma_x+\mathcal L_{1}[\gamma]\right)
 +\frac{\partial \Lambda_{NL}}{\partial \gamma} \left(\mathcal L[\gamma] ,x\right)
\left(\eta_x+ \mathcal L_{1}[\eta]\right)
\nonumber \\ &&
+\frac{\partial^2 \Lambda_{NL}}{\partial x \partial \gamma} \left(\mathcal L[\gamma],x\right)  \mathcal L[\eta] ,\quad\,\,\,
\\
F_{22}&=&\frac{\partial^2 f_{NL}}{\partial \gamma^2} \left(\mathcal L[\gamma],x\right)\mathcal L[\eta]\mathcal L[\eta]
+
\frac{\partial f_{NL}}{\partial \gamma} \left(\mathcal L[\gamma],x\right)\mathcal L[\theta].\quad
\end{eqnarray}
 The boundary conditions for $\theta=[\theta_1 \ \theta_2]^T$ are
\begin{eqnarray}
 \theta_1(0,t)&=&q  \theta_2(0,t) +G'_{NL}(\beta(0,t))\theta_2(0,t)+G''_{NL}(\beta(0,t))\eta^2_2(0,t),\\  \theta_2 (1,t) &=&d_1^2 a(t)+d_2^2 b(t).
\end{eqnarray}

Since (\ref{eqn-thetaNL2}) has the same structure as (\ref{eqn-etaNL2}), we define:
\begin{equation}
V_3= \int_0^1\theta^T(x,t) R[\gamma](x) \theta(x,t)dx,
\end{equation}
where $R[\gamma](x)$ was defined in Lemma~\ref{lem-lyapeta}.

Computing $\dot V_3$, and proceeding exactly as in (\ref{eqn-Vdot}), we find:
\begin{eqnarray}
\dot V_3
&=&
-\int_0^1\theta^T(x,t)\left( R[\gamma]\left(\Sigma(x) -F_1[\gamma] \right)\right)_x \theta(x,t)dx
\nonumber \\ &&
+\left[\theta^T(x,t)R[\gamma](x)\left(\Sigma(x) -F_1[\gamma](x) \right) \theta(x,t)\right]_{x=0}^{x=1}
+\int_0^1\theta^T(x,t) \left(R[\gamma]\right)_t \theta(x,t)dx
\nonumber \\ &&
-2 \int_0^1\theta^T(x,t) R[\gamma]F_7[\gamma,\gamma_x,\eta,\eta_x,\theta]dx
-2 \int_0^1\theta^T(x,t) R[\gamma]F_6[\gamma,\eta,\theta]dx.\qquad
\label{eqn-Wdot}
\end{eqnarray}
The first three terms of (\ref{eqn-Wdot}) are analyzed as in (\ref{eqn-Vdott1})--(\ref{eqn-Vdott2}):
\begin{eqnarray}
\dot V_3
&\leq&
-\lambda_1 V_3+K_1 \Vert \theta \Vert_{L^2}^2   \left(\Vert \gamma \Vert_{\infty} + \Vert \gamma _x \Vert_\infty  \right)
+\left(K_2 \Vert \gamma \Vert_{\infty} -\lambda_2\right) \left( \theta_1^2(1,t)+ \theta_2^2(0,t) \right)
\nonumber \\ &&
+2 \left|\int_0^1\theta^T(x,t) R[\gamma]F_6[\gamma,\eta,\theta]dx\right|
+2 \left|\int_0^1\theta^T(x,t) R[\gamma]F_7[\gamma,\gamma_x,\eta,\eta_x,\theta]dx\right|
\nonumber \\ &&+  K_3\Vert \theta \Vert^2_{L^2} \Vert \eta \Vert_{\infty}
+K_4(\eta^4_2(0,t)+(1+\Vert \gamma \Vert_{\infty})(a^2(t)+b^2(t)))
.\qquad\label{eqn-Wdot2}
\end{eqnarray}
Finally, applying Lemmas~\ref{lem-lyapeta} and~\ref{lem-F7F8bound} in the last two terms of (\ref{eqn-Wdot2}), there exists a $\delta$, such that for $\Vert \gamma \Vert_{\infty} < \delta$,
\begin{eqnarray}
&& 2 \left|\int_0^1\theta^T(x,t) R[\gamma]F_7[\gamma,\gamma_x,\eta,\eta_x,\theta]dx \right|
\leq
K_5 \int_0^1 \vert \theta \vert \vert F_7[\gamma,\gamma_x,\eta,\eta_x,\theta] \vert dx
\nonumber \\
&\leq&
K_6 \Vert \theta \Vert_{L^2}^2 \left( \Vert \gamma \Vert_{\infty}+\Vert \gamma_x \Vert_{\infty}\right)+
K_7\Vert \theta \Vert_{L^2} \Vert \eta \Vert_{L^2}^2
+
K_8\Vert \theta \Vert_{L^2} \Vert \eta_x \Vert_{L^2}\Vert \eta \Vert_{\infty}
+K_9 \Vert \theta \Vert_{L^2}
\nonumber \\  &&
+K_{10} \Vert \theta \Vert_{L^2}
 \left( \Vert \eta \Vert_{L^2}\Vert \eta \Vert_{\infty}^2+\vert\eta(0,t)\vert^2+ \vert \gamma(0,t)\vert \vert \theta(0,t) \vert\right),\qquad
\end{eqnarray}
and
\begin{eqnarray}
&&
2 \left|\int_0^1\theta^T(x,t) R[\gamma]F_8[\gamma,\eta,\theta]dx \right|
\leq
K_{11} \int_0^1 \vert \theta \vert \vert F_8[\gamma,\eta,\theta] \vert dx
\nonumber \\
&\leq& K_{11}
\Vert \theta \Vert^2_{L^2} \Vert \gamma \Vert_{\infty}+K_{12} \Vert \eta \Vert_{L^2} \Vert \theta \Vert_{L^2} \Vert \eta \Vert_{\infty}
+K_{12} \Vert \eta \Vert_{L^2} \Vert \theta \Vert_{L^2}^2
+K_{13} \Vert \eta \Vert_{L^2}^2 \Vert \theta \Vert_{L^2}.\qquad\,\,
\end{eqnarray}

Thus, by choosing $\Vert \gamma \Vert_{\infty} $ and $\Vert \eta \Vert_{\infty} $ small enough to apply Lemma~\ref{lem-thetaetaequiv}, we finally obtain the following proposition:
\begin{proposition}\label{pr-gammatt}
There exists $\delta_3$ such that if $\Vert \gamma \Vert_{\infty}+\Vert \eta \Vert_{\infty}< \delta_3$ then
\begin{eqnarray}
\dot V_3 &\leq&
-\lambda_5 V_3-\lambda_6 \left( \theta_1^2(1,t)+ \theta_2^2(0,t) \right)
+K_1V_3 V_2^{1/2}
+K_2 V_2 V_3^{1/2}
\nonumber \\ &&
+K_3 V_3^{3/2}+K_4\Vert \eta \Vert_{\infty} \eta^2_2(0,t)+K_5(a^2+b^2),\quad
\end{eqnarray}
where $\lambda_5,\lambda_6,K_1,K_2,K_3,K_4,K_5$ are positive constants.
\end{proposition}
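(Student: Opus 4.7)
The plan is to mirror the derivation of Proposition~\ref{pr-gammat}, now applied to the differentiated equation (\ref{eqn-thetaNL2}) for $\theta = \gamma_{tt}$. The structural advantage is that $\theta$ satisfies a transport-type equation with the \emph{same} symbol $\Sigma(x) - F_1[\gamma]$ as $\eta$, so the symmetrizer $R[\gamma]$ furnished by Lemma~\ref{lem-lyapeta} still renders the advection contribution skew, and the only genuinely new features are (i) the more complicated source terms $F_7,F_8$, which involve up to second derivatives of $\Lambda_{NL}$ and $f_{NL}$ and the extra variable $\theta$, and (ii) the nonlinear boundary condition $\theta_1(0,t) = (q + G'_{NL}(\beta(0,t)))\theta_2(0,t) + G''_{NL}(\beta(0,t))\eta_2^2(0,t)$, which is quadratic in $\eta$.

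First I would differentiate $V_3$ and, using (\ref{eqn-thetaNL2}) together with the symmetry identity (\ref{eqn-symm}), integrate by parts to obtain the five-term decomposition (\ref{eqn-Wdot}). The first three terms (the $(R[\gamma](\Sigma-F_1))_x$ integral, the boundary evaluation, and the $(R[\gamma])_t$ integral) are then treated exactly as in (\ref{eqn-Vdott1})--(\ref{eqn-Vdott2}), using the three bounds of Lemma~\ref{lem-lyapeta}; this already yields the leading $-\lambda_1 V_3$ term plus corrections of the form $\Vert\theta\Vert_{L^2}^2(\Vert\gamma\Vert_\infty + \Vert\gamma_x\Vert_\infty)$ and $\Vert\theta\Vert_{L^2}^2 \Vert\eta\Vert_\infty$, which, after restricting $\Vert\gamma\Vert_\infty$ small and invoking Lemma~\ref{lem-gammaetaequiv} to bound $\Vert\gamma_x\Vert_\infty$ by $\Vert\eta\Vert_\infty$, can be absorbed into a fraction of the negative $V_3$ term.

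Next I would insert the boundary conditions. At $x=1$, the Dirichlet value $\theta_2(1,t) = d_1^2 a + d_2^2 b$ is quadratic in the dynamic extension, producing the $K_5(a^2+b^2)$ contribution. At $x=0$, squaring $\theta_1(0,t)$ against $R[\gamma](0)\Sigma(0)$ gives, besides the usual $(q + G'_{NL})^2 \theta_2^2(0,t)$ piece (which is dominated by $-\lambda_2 \theta_2^2(0,t)$ through the same choice of $A,B,\mu$ as in Proposition~\ref{pr-gammat}), a cross term with $G''_{NL}(\beta)\eta_2^2(0,t)$; Young's inequality distributes this into a $\theta_2^2(0,t)$ piece and a quartic $\eta_2^4(0,t)$ piece, and the latter is then bounded by $\Vert\eta\Vert_\infty \eta_2^2(0,t)$, producing precisely the nonstandard $K_4\Vert\eta\Vert_\infty \eta_2^2(0,t)$ summand in the conclusion. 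For the remaining two terms of (\ref{eqn-Wdot}) involving $F_7$ and $F_8$, I apply Lemma~\ref{lem-F7F8bound}, use Cauchy--Schwarz to factor $\Vert\theta\Vert_{L^2}$ outside each source norm, and trade $\Vert\eta\Vert_{L^2}\mapsto V_2^{1/2}$, $\Vert\theta\Vert_{L^2}\mapsto V_3^{1/2}$, and $\Vert\eta_x\Vert_{L^2}\mapsto V_3^{1/2}$ plus harmless $V_2$ corrections via Lemma~\ref{lem-thetaetaequiv}. Collecting products gives exactly the pattern $V_3 V_2^{1/2}$, $V_2 V_3^{1/2}$, $V_3^{3/2}$ appearing in the statement.

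The main obstacle will be the boundary bookkeeping at $x=0$: the $G''_{NL}(\beta(0,t))\eta_2^2(0,t)$ contribution to $\theta_1(0,t)$ is intrinsically quadratic in $\eta$, so naively the boundary term is quartic in $\eta$, and one cannot hope to hide it inside $V_3$; the right move is to keep it as $\Vert\eta\Vert_\infty \eta_2^2(0,t)$, which in the eventual composite Lyapunov argument will be controlled by the $-\lambda_4 \eta_2^2(0,t)$ already supplied by Proposition~\ref{pr-gammat} once $\Vert\eta\Vert_\infty$ is sufficiently small. A secondary, purely bookkeeping, difficulty is to keep the cross terms $V_3 V_2^{1/2}$, $V_2 V_3^{1/2}$ and $V_3^{3/2}$ as separate structures rather than collapsing them into a single generic higher-order term, since the subsequent composite-Lyapunov argument needs to weight them differently against $V_1$, $V_2$ and $V_3$.
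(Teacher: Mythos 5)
Your proposal follows essentially the same approach as the paper: differentiate $V_3 = \int_0^1 \theta^T R[\gamma]\theta\,dx$ using the symmetrizer from Lemma~\ref{lem-lyapeta}, split $\dot V_3$ into the same five-term decomposition, treat the first three terms as in (\ref{eqn-Vdott1})--(\ref{eqn-Vdott2}), insert the boundary conditions (with the $\eta_2^4(0,t)$ piece coming from $G''_{NL}$ at $x=0$ subsequently absorbed into $\Vert\eta\Vert_\infty\,\eta_2^2(0,t)$, and $(a^2+b^2)$ from the dynamic extension at $x=1$), and bound the $F_7,F_8$ integrals via Lemma~\ref{lem-F7F8bound} and Lemma~\ref{lem-thetaetaequiv} to produce the $V_3 V_2^{1/2}$, $V_2 V_3^{1/2}$, $V_3^{3/2}$ terms. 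The steps and lemma invocations match the paper's argument, so the proposal is correct and not a genuinely different route.
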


\subsection{Proof of $H^2$ stability of $\gamma$}
Defining $W=V_1+V_2+V_3$, and combining Propositions~\ref{pr-gamma},~\ref{pr-gammat}, and~\ref{pr-gammatt}, there exists $\delta$ such that if $\Vert \gamma \Vert_{\infty}+\Vert \eta \Vert_{\infty}< \delta$
\begin{equation}
\dot W\leq - \lambda_1 W+C_1 W^{3/2}+C_2(a^2+b^2),
\end{equation}
for $\lambda,C_1,C_2>0$.
To compensate the last term, we augment this Lyapunov function and define $S=W+\frac{c}{2}(\frac{a^2}{d_1}+\frac{b^2}{d_2})$. Then,
\begin{equation}
\dot S\leq - \lambda_1 W+C_1 W^{3/2}+(C_2-c)(a^2+b^2),
\end{equation}
and choosing $c>C_2$, one obtains
\begin{equation}
\dot S\leq - \lambda_2 S+C_1 S^{3/2},
\end{equation}
for some positive $\lambda_2$.
Following~\cite{coron} and noting $\Vert \gamma \Vert_{\infty}+\Vert \eta \Vert_{\infty}\leq C_2 S$, then for sufficiently small $S(0)$, it follows that $S(t) \rightarrow 0$ exponentially.

Given that $W$ (by Proposition~\ref{prop-equiv}) is equivalent to the $H^2$ norm of $\gamma$ when $\Vert \gamma \Vert_{\infty}+\Vert \eta \Vert_{\infty}$ is sufficiently small, and since by construction $\gamma_0$ verifies the required second-order compatibility conditions, there exists $\delta>0$ and $c>0$ such that if $\Vert \gamma_0\Vert_{H^2} \leq \delta$, then:
\begin{equation}
\Vert \gamma \Vert_{H^2}^2+a(t)^2+b(t)^2 \leq c\,\mathrm{e}^{-\lambda t}\left( \Vert \gamma_0 \Vert_{H^2}^2+a(0)^2+b(0)^2 \right).
\end{equation}
Since, as we argued, for small enough $\Vert z \Vert_{H^2}$ the $H^2$ norms of $z$ and $\gamma$ are equivalent, this proves Theorem~\ref{thm-main}.
\begin{remark}\em\label{rem-qzero}
The proof has been carried out for the case $q\neq 0$. If $q=0$, we have to modify the target system following Section~\ref{sect-q0} and this implies the appearance of a linear boundary term (a coefficient times $\beta(0,t)$) in the $\gamma$ system; similarly, in the $\eta$ and $\theta$ systems, $\eta_2(0,t)$ and $\theta_2(0,t)$ terms will appear. These terms can be controlled using the same Lyapunov function by following the strategy outlined in Section~\ref{sect-q0}
\end{remark}

\section{Concluding remarks}\label{sect:conclusions}
We have solved the problem of full-state boundary stabilization for a $2\times2$ system of first-order hyperbolic quasilinear PDEs with actuation on only one boundary. We have shown, using a strict Lyapunov function, $H^2$ local exponential stability of the state. It is possible to extend this result to design an observer, as shown in~\cite{vazquez-obs}, and combining both results one obtains an output-feedback controller with similar properties (see~\cite{vazquez-of}).

It would be of interest to extend the method to $n\times n$ systems. For instance, a $3 \times 3$ first-order hyperbolic system of interest is the Saint-Venant-Exner system, which models open channels with a moving sediment bed~\cite{diagne}; the extension is shown (for the linear case) in~\cite{florent1}. While extending the Lyapunov analysis to $n \times n$ systems has been done~\cite{coron2}, considerable extra effort is required to extend backstepping to a general $n \times n$ system, even in the linear case. In general, the method  needs $n^2$ kernels resulting in a $n^2 \times n^2$ system of coupled first-order hyperbolic equations, whose well-posedness depends critically on the exact choice of the transformation and target system. The extension has been shown possible, for the linear case, if the system has $n$ positive and one negative transport speeds, with actuation only on the state corresponding to the negative velocity~\cite{florent2}.

\appendix

 \section{Well-posedness of the kernel equations}\label{sec:wp}
 We show well-posedness of the following hyperbolic $4\times4$ system, which is generic enough to contain all the kernel equation systems that appear in the paper:
\begin{eqnarray}\label{eqn-hypF1}
\epsilon_1(x)F^1_x+\epsilon_1(\xi)F^1_\xi&=&g_1(x,\xi)+\sum_{i=1}^4 C_{1i} (x,\xi)  F^i(x,\xi),\quad\,\\
\epsilon_1(x)F^2_x-\epsilon_2(\xi)F^2_\xi&=&g_2(x,\xi)+\sum_{i=1}^4 C_{2i}(x,\xi) F^i(x,\xi),\label{eqn-hypF2}\\
\epsilon_2(x)F^3_x-\epsilon_1(\xi)F^3_\xi&=&g_3(x,\xi)+\sum_{i=1}^4 C_{3i}(x,\xi) F^i(x,\xi),\label{eqn-hypF3}\\
\epsilon_2(x)F^4_x+\epsilon_2(\xi)F^4_\xi&=&g_4(x,\xi)+\sum_{i=1}^4 C_{4i}(x,\xi) F^i(x,\xi),\quad\,\,\,\,\,\, \label{eqn-hypF4}
\end{eqnarray}
evolving in the domain $\mathcal T=\{(x,\xi):0\leq \xi \leq x \leq 1\}$,
with boundary conditions:
\begin{eqnarray} \label{eqn-bcF1}
F^1(x,0)&=&h_1(x)+q_1(x)F^2(x,0)+q_2(x)F^3(x,0),\quad\,\,\\
F^2(x,x)&=&h_2(x),\quad
F^3(x,x)=h_3(x), \label{eqn-bcF2}\\
F^4(x,0)&=&h_4(x)+q_3(x)F^2(x,0)+q_4(x)F^3(x,0).\quad\,\,\label{eqn-bcF4}
\end{eqnarray}
This type of system has been called ``generalized Goursat problem'' by some authors~\cite{holten}. However the boundaries of the domain $\mathcal T$ are characteristic for (\ref{eqn-hypF1}) and (\ref{eqn-hypF4}), thus the general results derived in \cite{holten} cannot be applied. The following theorems discusses existence, uniqueness and smoothness of solutions to the equations.

\begin{theorem}\label{th-wp}
Consider the hyperbolic system (\ref{eqn-hypF1})--(\ref{eqn-bcF4}).
Under the assumptions
$
q_i,h_i\in \mathcal C([0,1]),\,g_i,C_{ji}\in \mathcal C(\mathcal T),\,\,i,j=1,2,3,4
$
and $\epsilon_1,\epsilon_2\in \mathcal C ([0,1])$ with $\epsilon_1(x),\epsilon_2(x)>0$, there exists a unique $\mathcal C(\mathcal T)$ solution $F^i$, $i=1,2,3,4$.
\end{theorem}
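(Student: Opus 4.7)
The plan is to reduce the PDE system to an equivalent system of integral equations via the method of characteristics, and then solve the latter by successive approximations. First I would introduce $\phi_1(x)=\int_0^x d\sigma/\epsilon_1(\sigma)$ and $\phi_2(x)=\int_0^x d\sigma/\epsilon_2(\sigma)$, which by positivity of $\epsilon_i$ are $\mathcal C^1$ strictly increasing diffeomorphisms of $[0,1]$. The characteristic curves for (\ref{eqn-hypF1})--(\ref{eqn-hypF4}) are then the level sets of $\phi_1(x)-\phi_1(\xi)$, $\phi_1(x)+\phi_2(\xi)$, $\phi_2(x)+\phi_1(\xi)$ and $\phi_2(x)-\phi_2(\xi)$, respectively. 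A direct check shows that, starting from any $(x,\xi)\in\mathcal T$, the backward characteristic of $F^1$ or $F^4$ hits the axis $\xi=0$ at a unique point in $[0,x]$, while the backward characteristic of $F^2$ or $F^3$ hits the diagonal $\xi=x$ at a unique point in $[\xi,x]$; in each case the entire characteristic segment remains inside $\mathcal T$.

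Parametrizing along each characteristic and integrating the PDE yields, for each $i$, an equation of the form
\begin{equation*}
F^i(x,\xi)= F^i_{\mathrm{bdry}}(x,\xi) + \int_0^{s^i(x,\xi)}\Bigl[g_i+\sum_{j=1}^4 C_{ij}F^j\Bigr]\bigl(\Gamma^i_{x,\xi}(s)\bigr)\,ds,
\end{equation*}
where $\Gamma^i_{x,\xi}$ is the backward characteristic and $s^i(x,\xi)$ its parameter length. For $i=2,3$ the boundary term is just $h_i$ evaluated at the hit point on the diagonal. For $i=1,4$ the boundary term at $(x^\star,0)$ involves $F^2(x^\star,0)$ and $F^3(x^\star,0)$ through (\ref{eqn-bcF1}),(\ref{eqn-bcF4}); substituting in turn the integral representations of $F^2, F^3$ at $(x^\star,0)$, whose own characteristics trace back to the diagonal where $h_2, h_3$ supply the data, produces a closed fixed-point equation $F=TF+\Phi$ on $\mathcal C(\mathcal T;\mathbb R^4)$, with $\Phi\in\mathcal C(\mathcal T;\mathbb R^4)$ and $T$ a bounded linear integral operator built from the continuous coefficients $C_{ij}, q_k$.

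To solve this I would run successive approximations: set $F_0=\Phi$ and $F_{n+1}=TF_n+\Phi$. A pointwise induction on $n$, applied componentwise and along the characteristics, gives an estimate of the form
\begin{equation*}
\max_i\bigl|F^i_{n+1}(x,\xi)-F^i_n(x,\xi)\bigr|\le \frac{\bigl(M\,\rho(x,\xi)\bigr)^n}{n!}\,\|F_1-F_0\|_{\mathcal C(\mathcal T)},
\end{equation*}
where $M$ bounds the relevant coefficients and $\rho(x,\xi)\le \phi_1(1)+\phi_2(1)$ measures the cumulative characteristic length traversed (including the auxiliary segments used to express $F^2, F^3$ at $(x^\star,0)$). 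Summing this factorial series shows that $(F_n)$ is Cauchy in $\mathcal C(\mathcal T;\mathbb R^4)$, so its uniform limit $F$ is continuous and solves $F=TF+\Phi$; reversing the characteristic integration, $F$ solves (\ref{eqn-hypF1})--(\ref{eqn-bcF4}). Uniqueness is immediate: the difference of two solutions satisfies $F=TF$, and the same factorial bound forces $F\equiv 0$.

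The main obstacle is bookkeeping the coupling of all four components, both through the internal $C_{ij}$ terms and through the boundary conditions (\ref{eqn-bcF1}),(\ref{eqn-bcF4}), where $F^1, F^4$ at $\xi=0$ are expressed in terms of $F^2, F^3$ at the same points. A naive cascade does not work, so one must handle the full vector fixed point $F=TF+\Phi$ and verify that the compositions of characteristic traces depend continuously on $(x,\xi)$ and that the composite operator $T$ still enjoys the Volterra (nilpotent on short characteristic intervals) structure needed for the factorial bound. Once this bookkeeping is in place, the successive approximation argument proceeds exactly as in the scalar one-dimensional Goursat setting.
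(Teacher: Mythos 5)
Your proposal matches the paper's proof in structure and in all essential details: the same characteristic coordinates $\phi_1,\phi_2$, the same observation that the $F^1,F^4$ characteristics terminate on $\xi=0$ while the $F^2,F^3$ characteristics terminate on the diagonal, the same trick of inserting the integral representations of $F^2(\cdot,0),F^3(\cdot,0)$ into the boundary conditions (\ref{eqn-bcF1}),(\ref{eqn-bcF4}) to close the fixed-point equation, and the same successive-approximation scheme with a factorial estimate yielding uniform convergence, continuity, and uniqueness. The paper phrases the factorial bound in powers of $x$ rather than a cumulative characteristic length $\rho(x,\xi)$, but this is a cosmetic difference; the argument is the same.
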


\begin{theorem}\label{th-sm}
Consider the hyperbolic system (\ref{eqn-hypF1})--(\ref{eqn-bcF4}). Under the assumptions of Theorem~\ref{th-wp}, and the additional assumptions
$
\epsilon_i,q_i,h_i\in \mathcal C^N([0,1]),\,g_i,C_{ji}\in \mathcal C^N(\mathcal T)$,
there exists a unique $\mathcal C^N(\mathcal T)$ solution $F^i$, $i=1,2,3,4$.
\end{theorem}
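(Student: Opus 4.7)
The plan is to induct on $N$, with the base case $N=0$ being exactly Theorem~\ref{th-wp}. For the inductive step, suppose the result holds for $N-1$ and assume the data satisfy the $\mathcal{C}^N$ hypotheses. Applying the inductive hypothesis to the original system itself (its data are in particular $\mathcal{C}^{N-1}$), we conclude $F^i \in \mathcal{C}^{N-1}(\mathcal{T})$, so it remains to upgrade this to $\mathcal{C}^N$ by showing that $F^i_x$ and $F^i_\xi$ are $\mathcal{C}^{N-1}(\mathcal{T})$.

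First I would differentiate (\ref{eqn-hypF1})--(\ref{eqn-hypF4}) formally in $x$ and in $\xi$. The resulting equations for $F^i_x$ and $F^i_\xi$ have exactly the same characteristic speeds ($\pm\epsilon_1, \pm\epsilon_2$) as the original system; the new zero-order coefficients are expressible in terms of derivatives of $\epsilon_j, C_{ji}$ (hence $\mathcal{C}^{N-1}$) and the source terms involve only $g_i, C_{ji}$, their first derivatives, and the already-known $F^i \in \mathcal{C}^{N-1}(\mathcal{T})$. Thus each of the two systems (one for $(F^i_x)$, one for $(F^i_\xi)$) is again a $4\times 4$ hyperbolic system of the form treated in Theorem~\ref{th-wp}, with all PDE data in $\mathcal{C}^{N-1}$.

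The core step, and the main technical point, is to derive boundary conditions for the differentiated system that fit the template (\ref{eqn-bcF1})--(\ref{eqn-bcF4}). On the diagonal $\xi = x$, differentiating $F^2(x,x) = h_2(x)$ gives the tangential relation $F^2_x(x,x) + F^2_\xi(x,x) = h_2'(x)$, while evaluating the PDE (\ref{eqn-hypF2}) at $\xi=x$ gives $\epsilon_1(x) F^2_x(x,x) - \epsilon_2(x) F^2_\xi(x,x) = g_2(x,x) + \sum_i C_{2i}(x,x)F^i(x,x)$. The determinant of this $2\times 2$ linear system is $-(\epsilon_1(x)+\epsilon_2(x)) < 0$, so $F^2_x(x,x)$ and $F^2_\xi(x,x)$ are uniquely determined as $\mathcal{C}^{N-1}$ functions of $x$; the same procedure handles $F^3$ on the diagonal. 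On $\xi=0$, differentiating (\ref{eqn-bcF1}) in $x$ yields $F^1_x(x,0)$ as a linear combination with $\mathcal{C}^{N-1}$ coefficients of $F^2_x(x,0), F^3_x(x,0)$, and known data, giving a condition of the exact form (\ref{eqn-bcF1}); evaluating the PDE (\ref{eqn-hypF1}) at $\xi=0$ and using $\epsilon_1(0) > 0$ recovers $F^1_\xi(x,0)$. The same device handles $F^4$ at $\xi=0$. All the required coefficients are $\mathcal{C}^{N-1}$.

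Having verified that the derivative system satisfies the hypotheses of the $(N-1)$ case of Theorem~\ref{th-sm}, I invoke the inductive hypothesis to obtain $F^i_x, F^i_\xi \in \mathcal{C}^{N-1}(\mathcal{T})$, which together give $F^i \in \mathcal{C}^N(\mathcal{T})$ and close the induction. The main obstacle is the bookkeeping in the previous paragraph: one must check carefully that every derived boundary condition has the structural form (\ref{eqn-bcF1})--(\ref{eqn-bcF4}) required to re-apply the inductive step, and that the relevant $2\times 2$ matrices encountered at $\xi=x$ and $\xi=0$ are invertible—both of which reduce to the strict positivity and sign conditions on $\epsilon_1, \epsilon_2$ already assumed.
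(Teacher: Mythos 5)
Your proposal matches the paper's argument: the paper likewise differentiates the PDEs in $x$ and $\xi$ to obtain two $4\times4$ systems of the same form for $G^i=F^i_x$ and $H^i=F^i_\xi$, recovers their boundary conditions at $\xi=x$ by combining the tangential derivative of the data with the PDE restricted to the diagonal (and at $\xi=0$ by differentiating the boundary condition and evaluating the PDE there), and then re-applies the well-posedness theorem, handling $N\ge 2$ by induction exactly as you do. The approach and the key invertibility observations are the same, so your proof is essentially the paper's.
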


Next we prove the theorems; the proof is based on transforming the equations into integral equations and then solving them using a successive approximation method.

\subsection{Transformation to integral equations}\label{sect-kerneltrans}
The equations can be transformed into integral equations by the method of characteristics. For that, it is necessary to define:
\begin{equation}
\phi_1(x)=\int_0^x\frac{1}{\epsilon_1(z)}dz,\,\phi_2(x)=\int_0^x\frac{1}{\epsilon_2(z)}dz,
\end{equation}
and $\phi_3(x)=\phi_1(x)+\phi_2(x)$.
Note that all the $\phi$ functions are monotonically increasing and thus invertible, due to positivity of the $\epsilon$ coefficients. Under the assumptions of Theorem~\ref{th-wp}, it also holds that $\phi_i,\phi_i^{-1}\in\mathcal C^1([0,1])$.

Define, for $(x,\xi)\in\mathcal T$, the characteristic lines along which (\ref{eqn-hypF1})--(\ref{eqn-hypF4}) evolve:
\begin{eqnarray} \label{eqn-x1}
x_1(x,\xi,s)&=&\phi_1^{-1}\left(\phi_1(x)-\phi_1(\xi)+s\right),\,\\ \xi_1(x,\xi,s)&=&\phi_1^{-1}(s),\\
x_2(x,\xi,s)&=&\phi_1^{-1}\left(\phi_1\left(\phi_3^{-1}\left(\phi_1(x)+\phi_2(\xi)\right)\right)
+s\right), \\
\xi_2(x,\xi,s)&=&\phi_2^{-1}\left(\phi_2\left(\phi_3^{-1}\left(\phi_1(x)+\phi_2(\xi)\right)\right)
-s\right),\quad\\
x_3(x,\xi,s)&=&\phi_2^{-1}\left(\phi_2\left(\phi_3^{-1}\left(\phi_2(x)+\phi_1(\xi)\right)\right)
+s\right),\\ \xi_3(x,\xi,s)&=&\phi_1^{-1}\left(\phi_1\left(\phi_3^{-1}\left(\phi_2(x)+\phi_1(\xi)\right)\right)
-s\right),\\
x_4(x,\xi,s)&=&\phi_2^{-1}\left(\phi_2(x)-\phi_2(\xi)+s\right),\\\xi_4(x,\xi,s)&=&\phi_2^{-1}(s), \label{eqn-xi4}
\end{eqnarray}
 where the argument $s$ that parameterizes $x_i$ and $\xi_i$ belongs to the interval $[0,s_i^F]$, with $s_i^F$ defined as
\begin{eqnarray} \label{eqn-s1f}
s_1^F(x,\xi)&=&\phi_1(\xi),\\
s_2^F(x,\xi)&=&\phi_1(x)-\phi_1\left(\phi_3^{-1}\left(\phi_1(x)+\phi_2(\xi)\right)\right),\\
s_3^F(x,\xi)&=&\phi_2(x)-\phi_2\left(\phi_3^{-1}\left(\phi_2(x)+\phi_1(\xi)\right)\right),\quad\\
s_4^F(x,\xi)&=&\phi_2(\xi). \label{eqn-s4f}
 \end{eqnarray}
 The following holds
 \begin{lemma}
If $(x,\xi)\in\mathcal T$ and $s\in[0,s_i^F]$, it holds that $(x_i(x,\xi,s),\xi_i(x,\xi,s))\in\mathcal T$, for $i=1,\hdots,4$.  Also, under the assumptions of Theorem~\ref{th-wp}, $x_i$, $\xi_i$, and $s_i^F$ are continuous in their domains of definition  since they are defined as compositions of continuous functions. Moreover, the following inequalities are verified
\begin{eqnarray}
x_i(x,\xi,s)&\leq& x,\quad i=1,\hdots,4,\\
\xi_1,\xi_4(x,\xi,s) &\leq& \xi, \quad
\xi_2,\xi_3(x,\xi,s) \geq \xi.
\end{eqnarray}
 \end{lemma}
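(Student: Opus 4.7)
The plan is to handle the three claims in turn, each by straightforward monotonicity considerations applied to the formulas \eqref{eqn-x1}--\eqref{eqn-xi4}.

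\textbf{Continuity.} Under the hypotheses of Theorem~\ref{th-wp}, the functions $\phi_1,\phi_2$ are $\mathcal C^1$ with strictly positive derivatives $1/\epsilon_1, 1/\epsilon_2$, so $\phi_3=\phi_1+\phi_2$ is also strictly increasing, and $\phi_1,\phi_2,\phi_3$ are global homeomorphisms of $[0,1]$ onto their images, with continuous inverses. Each $x_i,\xi_i,s_i^F$ is a finite composition of these functions (plus additive operations in $s$), so continuity follows immediately.

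\textbf{Geometric picture.} For each $i$ I would first compute the characteristic at its two endpoints $s=0$ and $s=s_i^F$. For $i=1$ one finds $(x_1,\xi_1)|_{s=0}=(\phi_1^{-1}(\phi_1(x)-\phi_1(\xi)),0)$ and $(x_1,\xi_1)|_{s=s_1^F}=(x,\xi)$; for $i=4$ the same with $\phi_1\leftrightarrow\phi_2$. For $i=2$, introducing the shorthand $u=\phi_3^{-1}(\phi_1(x)+\phi_2(\xi))$ and using the identity $\phi_1(u)+\phi_2(u)=\phi_1(x)+\phi_2(\xi)$, one gets $(x_2,\xi_2)|_{s=0}=(u,u)$ and $(x_2,\xi_2)|_{s=s_2^F}=(x,\xi)$; case $i=3$ is analogous with $v=\phi_3^{-1}(\phi_2(x)+\phi_1(\xi))$. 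Thus the characteristics for $i=1,4$ run from the axis $\xi=0$ to $(x,\xi)$, while those for $i=2,3$ run from the diagonal $\xi=x$ to $(x,\xi)$.

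\textbf{Inequalities and membership in $\mathcal T$.} For $i=1$, $\phi_1(\xi_1(s))=s\le\phi_1(\xi)$ gives $\xi_1\le\xi$, and $\phi_1(x_1(s))=\phi_1(x)-\phi_1(\xi)+s\le\phi_1(x)$ gives $x_1\le x$; moreover $\phi_1(x_1)-\phi_1(\xi_1)=\phi_1(x)-\phi_1(\xi)\ge 0$ (since $x\ge\xi$), so $x_1\ge\xi_1\ge 0$. The case $i=4$ is identical with $\phi_2$ in place of $\phi_1$. For $i=2$, $\phi_1(x_2(s))=\phi_1(u)+s$ is nondecreasing in $s$ and $\phi_2(\xi_2(s))=\phi_2(u)-s$ is nonincreasing in $s$, so $u\le x_2\le x$ and $\xi\le\xi_2\le u$; the flanking inequalities $\xi\le u\le x$ follow from $\phi_3(\xi)\le\phi_1(x)+\phi_2(\xi)\le\phi_3(x)$, itself a consequence of $\xi\le x$ and monotonicity. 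Case $i=3$ is symmetric. Combining these bounds, $0\le\xi_i\le x_i\le x\le 1$ in all four cases, hence $(x_i,\xi_i)\in\mathcal T$.

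The main (minor) obstacle is unpacking the nested expressions $\phi_3^{-1}(\phi_1(x)+\phi_2(\xi))$ and $\phi_3^{-1}(\phi_2(x)+\phi_1(\xi))$ cleanly; once one introduces $u,v$ as above and uses $\phi_1(u)+\phi_2(u)=\phi_1(x)+\phi_2(\xi)$, every claim collapses to monotonicity of the $\phi_j$ and $\phi_j^{-1}$ together with the fixed inequality $\xi\le x$ defining $\mathcal T$, so no further subtlety arises.
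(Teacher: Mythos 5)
The paper does not actually supply a proof of this lemma: the continuity assertion is justified inline in the lemma statement itself (``since they are defined as compositions of continuous functions''), and the membership-in-$\mathcal T$ claim and the inequalities are stated without argument. Your proof therefore fills a genuine gap rather than matching an existing one, and it is correct.

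The approach is the natural one. Introducing the shorthand $u=\phi_3^{-1}\bigl(\phi_1(x)+\phi_2(\xi)\bigr)$ and $v=\phi_3^{-1}\bigl(\phi_2(x)+\phi_1(\xi)\bigr)$ and exploiting $\phi_1(u)+\phi_2(u)=\phi_1(x)+\phi_2(\xi)$ makes the endpoint computations for $i=2,3$ transparent; I verified that $s=0$ gives $(u,u)$ (respectively $(v,v)$) on the diagonal and $s=s_i^F$ gives $(x,\xi)$, and that $\xi\le u\le x$, $\xi\le v\le x$ follow from $\xi\le x$ and monotonicity of $\phi_1,\phi_2,\phi_3$. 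For $i=1,4$ your bookkeeping $\phi_1(x_1)-\phi_1(\xi_1)=\phi_1(x)-\phi_1(\xi)\ge 0$ (and its $\phi_2$ analogue) is exactly what is needed to conclude $x_1\ge\xi_1$, and the chains $0\le\xi_i\le x_i\le x\le 1$ give $(x_i,\xi_i)\in\mathcal T$ in every case. The continuity argument is also fine: with $\epsilon_1,\epsilon_2\in\mathcal C([0,1])$ strictly positive, each $\phi_j$ is a $\mathcal C^1$ strictly increasing homeomorphism onto its image with continuous (indeed $\mathcal C^1$) inverse, as the paper itself notes just before the lemma, so $x_i,\xi_i,s_i^F$ are compositions of continuous maps. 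No gaps.
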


 Using these definitions,  (\ref{eqn-hypF1})--(\ref{eqn-hypF4})  are integrated to:
 \begin{eqnarray}
F^j(x,\xi)&=&F^j\left(x_j(x,\xi,0),\xi_j(x,\xi,0)\right)
+G_j(x,\xi)
+I_j[F](x,\xi),
\end{eqnarray}
where we have denoted
\begin{eqnarray}
 F&=&\left[\begin{array}{c}F^1\\F^2\\F^3\\F^4\end{array}\right],\quad
 G_j(x,\xi)=\int_0^{s^F_j(x,\xi)} g_j\left(x_j(x,\xi,s),\xi_j(x,\xi,s)\right)   ds,
\\ I_j[F](x,\xi)&=&
\sum_{i=1}^4 \int_0^{s^F_j(x,\xi)} \hspace{-10pt}
 C_{ji}\left(x_j(x,\xi,s),\xi_j(x,\xi,s)\right)
F^i\left(x_j(x,\xi,s),\xi_j(x,\xi,s)\right)
 ds,\label{eqn-integrat} \qquad
\end{eqnarray}
for $j=1,2,3,4$. Substituting the boundary conditions (\ref{eqn-bcF1})--(\ref{eqn-bcF4}) and expressing the terms in  (\ref{eqn-bcF1}) and (\ref{eqn-bcF4}) containing $F^2(x,0)$ and $F^3(x,0)$ in terms of the solution (\ref{eqn-integrat}) we get four integral equations which have the following structure:
 \begin{eqnarray}
F^j(x,\xi)&=&H_j(x,\xi)+G_j(x,\xi)+
\varphi_j(x,\xi)
+Q_j[F](x,\xi)+I_j[F](x,\xi),\qquad
\end{eqnarray}
where $H_j(x,\xi)=h_j(x_j(x,\xi,0))$, $\varphi_j(x,\xi)$ is has the values $\varphi_2=\varphi_3=0$ and
\begin{eqnarray}
\varphi_1&=&q_1(x_1(x,\xi,0))H_2(x_1(x,\xi,0),0)+q_2(x_1(x,\xi,0))H_3(x_1(x,\xi,0),0)
\nonumber \\ &&
+q_1(x_1(x,\xi,0))G_2(x_1(x,\xi,0),0)
+q_2(x_1(x,\xi,0))
G_3(x_1(x,\xi,0),0)
,\qquad \,\,\\
\varphi_4&=&q_3(x_4(x,\xi,0))H_2(x_4(x,\xi,0),0)+q_4(x_4(x,\xi,0))H_3(x_4(x,\xi,0),0)
\nonumber \\ &&
+q_3(x_4(x,\xi,0))G_2(x_4(x,\xi,0),0)
+q_4(x_4(x,\xi,0))
G_3(x_4(x,\xi,0),0)
.\qquad \,\,
\end{eqnarray}
and the values of the $Q_j[F](x,\xi)$ are $Q_2=Q_3=0$ and
\begin{eqnarray}
Q_1[F]&=&q_1(x_1(x,\xi,0))I_2[F](x_1(x,\xi,0),0)
+q_2(x_1(x,\xi,0))I_3[F](x_1(x,\xi,0),0),\qquad \,\,\\
Q_4[F]&=&q_3(x_4(x,\xi,0))I_2[F](x_4(x,\xi,0),0)
+q_4(x_4(x,\xi,0))I_3[F](x_4(x,\xi,0),0).\qquad \,\,
\end{eqnarray}

In this form, the equations are amenable to be solved using the successive approximation method. This is explained next.

\subsection{Solution of the integral equation via a successive approximation series}\label{sect-kernelapprox}

The successive approximation method can be used to solve the integral equations. Define first the following functional acting on $ F$:
\begin{equation}
\Phi_j[F](x,\xi)=Q_j[F](x,\xi)+I_j[F](x,\xi),
\end{equation}
and the vectors:
\begin{equation}
\varphi=\left[\begin{array}{c}H_1+G_1+\varphi_1\\H_2+G_2+\varphi_2\\H_3+G_3+\varphi_3\\H_4+G_4+\varphi_4\end{array}\right],\,
 \Phi[F]=\left[\begin{array}{c}\Phi_1[F]\\\Phi_2[F]\\\Phi_3[F]\\\Phi_4[F]\end{array}\right].\,
\end{equation}
Define then
\begin{eqnarray}
 F^0(x,\xi)&=&\varphi(x,\xi),
 F^{n}(x,\xi)=\Phi[ F^{n-1}](x,\xi).\quad \label{eqn-Fn}
\end{eqnarray}
Finally define for $n\geq1$ the increment $\Delta  F^n= F^{n}- F^{n-1}$, with $\Delta  F^0=\varphi$ by definition. It is easy to see that, since $\Phi$ is a linear functional, the equation $\Delta  F^n(x,\xi)=\Phi[ \Delta F^{n-1}](x,\xi)$ holds.

If $\lim_{n\rightarrow \infty} F^n(x,\xi)$ exists, then $ F=\lim_{n\rightarrow \infty} F^n(x,\xi)$ is a solution of the integral equations (and thus solves the original hyperbolic system). Using the definition of $\Delta  F^n$, it follows that  if $\sum_{n=0}^{\infty}\Delta  F^n(x,\xi)$ converges, then
\begin{equation}
 F (x,\xi)=\sum_{n=0}^{\infty}\Delta  F^n(x,\xi).\label{eqn-Fseriesdef}
\end{equation}
\subsection{Proof of convergence of the successive approximation series}
First, define:
\begin{eqnarray}
\bar \phi&=&\max_{\substack{(x,\xi)\in\mathcal T\\
i=1,2,3,4}} \left\{ \left| \varphi_i(x,\xi) \right|\right\},\,\, \bar C_{ji}=\max_{(x,\xi)\in\mathcal T}\left|C_{ji}(x,\xi)\right|,\,\, K_{\epsilon}=\max_{(x,\xi)\in\mathcal T} \left\{\frac{1}{\epsilon_1(x)},\frac{1}{\epsilon_2(x)}\right\},
\nonumber
\\ \bar q_i&=&\max_{x\in[0,1]} \left| q_i(x) \right|,
\bar C= \left(1+\sum_{i=1}^4\bar q_{i} \right)\left(\sum_{j=1}^4\sum_{i=1}^4\bar C_{ji} \right).
\end{eqnarray}
Next, we prove the following two lemmas:
\begin{lemma}\label{lem-int}
For $i=1,2,3,4$, $n\geq1$, $(x,\xi)\in\mathcal T$, and $s^F_i(x,\xi)$, $x_i(x,\xi,s)$ defined as in (\ref{eqn-x1})--(\ref{eqn-s4f}), it follows that
\begin{equation}
\int_0^{s^F_i(x,\xi)}x^n_i(x,\xi,s)ds\leq K_{\epsilon} \frac{x^{n+1}}{n+1}.
\end{equation}
\end{lemma}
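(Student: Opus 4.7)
The plan is to treat each of the four cases uniformly by changing the variable of integration from $s$ to $y=x_i(x,\xi,s)$. From the explicit formulas (\ref{eqn-x1})--(\ref{eqn-xi4}) and the definition of $\phi_1,\phi_2$, differentiating $x_i$ in $s$ yields $\frac{d x_i}{ds}=\epsilon_1(x_i)$ for $i=1,2$ and $\frac{d x_i}{ds}=\epsilon_2(x_i)$ for $i=3,4$, since the outer function in $x_i$ is $\phi_1^{-1}$ in the first two cases and $\phi_2^{-1}$ in the last two, and $(\phi_k^{-1})'(u)=\epsilon_k(\phi_k^{-1}(u))$. In particular $\frac{d x_i}{ds}>0$, so the change of variable is a valid diffeomorphism onto its image.

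Next I would identify the endpoints. Evaluating the formulas in (\ref{eqn-x1})--(\ref{eqn-xi4}) at $s=0$ and at $s=s_i^F$ given by (\ref{eqn-s1f})--(\ref{eqn-s4f}), one checks in each case that $x_i(x,\xi,s_i^F)=x$: for $i=1$, $\phi_1(x)-\phi_1(\xi)+\phi_1(\xi)=\phi_1(x)$; for $i=2$, the offset $s_2^F$ is exactly what one needs to add to $\phi_1(\phi_3^{-1}(\phi_1(x)+\phi_2(\xi)))$ to reach $\phi_1(x)$; symmetrically for $i=3,4$. Combined with the already-stated inequality $x_i(x,\xi,s)\leq x$ and the fact that $x_i(x,\xi,0)\geq 0$, the image of $[0,s_i^F]$ under $y=x_i(x,\xi,s)$ is a subinterval of $[0,x]$.

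The change of variable then gives, for example in the case $i=1$,
\begin{equation}
\int_0^{s_1^F} x_1^n(x,\xi,s)\,ds=\int_{x_1(x,\xi,0)}^{x} y^n \,\frac{dy}{\epsilon_1(y)}\leq K_{\epsilon}\int_0^{x} y^n\,dy = K_{\epsilon}\,\frac{x^{n+1}}{n+1},
\end{equation}
using $1/\epsilon_1(y)\leq K_{\epsilon}$ and extending the lower limit from $x_1(x,\xi,0)\geq 0$ down to $0$. The three remaining cases proceed identically, with $\epsilon_1$ replaced by $\epsilon_2$ in the cases $i=3,4$; the bound $1/\epsilon_2\leq K_{\epsilon}$ plays the same role.

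There is really no serious obstacle here, only bookkeeping: the potentially tricky step is simply checking that the formulas for $s_i^F$ are calibrated so that $x_i$ reaches $x$ precisely at $s=s_i^F$, which is what makes the upper limit of the $y$-integral equal to $x$. Once that is confirmed from (\ref{eqn-s1f})--(\ref{eqn-s4f}) by direct substitution, the stated inequality follows for all four indices at once.
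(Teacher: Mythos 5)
Your proof is correct and follows essentially the same route as the paper: change of variable $y=x_i(x,\xi,s)$, noting $\frac{dx_i}{ds}=\epsilon_k(x_i)$ and that the upper endpoint is exactly $x$, then bound $1/\epsilon_k\leq K_\epsilon$ and extend the lower limit to $0$. The only cosmetic difference is that the paper explicitly works cases $i=1,2$ and invokes the $\epsilon_1\leftrightarrow\epsilon_2$ symmetry for $i=3,4$, whereas you handle all four indices in a single pass.
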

\vspace{2pt}
\begin{proof}
We show the result for $i=1,2$. It follows for $i=3,4$ by switching $\epsilon_1$ and $\phi_1$, respectively, for $\epsilon_2$ and $\phi_2$.
For $i=1$ we can write:
\begin{equation}
\int_0^{\phi_1(\xi)}  x_1^n(x,\xi,s)ds=\int_0^{\phi_1(\xi)}   \left[\phi_1^{-1}\left(\phi_1(x)-\phi_1(\xi)+s\right)) \right]^n ds.
\end{equation}
To prove the inequality, change the variable of integration to $z=\phi_1^{-1}\left(\phi_1(x)-\phi_1(\xi)+s\right)$. Then, taking into account
\begin{equation}
\frac{dz}{ds}
=\frac{d}{ds}\left[\phi_1^{-1}\left(\phi_1(x)-\phi_1(\xi)+s\right)\right]=\frac{1}{\phi_1'(z)}=\epsilon_1(z),
\end{equation}
the integral can be bounded as follows:
\begin{eqnarray}
&& \int_0^{\phi_1(\xi)} \left[\phi_1^{-1}\left(\phi_1(x)-\phi_1(\xi)+s\right)) \right]^n ds
\nonumber \\
&=&
\int_{\phi_1^{-1}\left(\phi_1(x)-\phi_1(\xi)\right)}^{x}  z^n/ \epsilon_1(z) dz \leq K_{\epsilon} \int_{0}^{x} z^n dz=K_{\epsilon} \frac{x^{n+1}}{n+1}.
\end{eqnarray}

For $i=2$ the integral can be written as:
\begin{eqnarray}
 \int_0^{\phi_1(x)-\phi_1\left(\phi_3^{-1}\left(\phi_1(x)+\phi_2(\xi)\right)\right)}
 \hspace{-100pt}
 \left[\phi_1^{-1}\left(\phi_1\left(\phi_3^{-1}\left(\phi_1(x)+\phi_2(\xi)\right)\right)+s\right)\right]^n ds.
\end{eqnarray}
As before,
 change the variable of integration to $z=\phi_1^{-1}\left(\phi_1\left(\phi_3^{-1}\left(\phi_1(x)+\phi_2(\xi)\right)\right)+s\right)$. Then
 one  has that $
\frac{dz}{ds}
=\frac{1}{\phi_1'(z)}=\epsilon_1(z)$,
thus
the integral can be bounded as follows:
\begin{eqnarray}
&& \int_0^{\phi_1(x)-\phi_1\left(\phi_3^{-1}\left(\phi_1(x)+\phi_2(\xi)\right)\right)} \hspace{-100pt}  \left[\phi_1^{-1}\left(\phi_1\left(\phi_3^{-1}\left(\phi_1(x)+\phi_2(\xi)\right)\right)+s\right)\right]^n ds
\nonumber \\
&=&
\int_{\phi_3^{-1}\left(\phi_1(x)+\phi_2(\xi)\right)}^{x}  z^n/ \epsilon_1(z) dz \leq K_{\epsilon} \int_{0}^{x} z^n dz=K_{\epsilon} \frac{x^{n+1}}{n+1},\qquad\,
\end{eqnarray}
which concludes the proof.
\end{proof}
\begin{lemma}\label{lemma-bound}
For $i=1,2,3,4$, $n\geq1$ and $(x,\xi)\in\mathcal T$, assume that
\begin{equation}
\left| \Delta F^n_i (x,\xi) \right|  \leq \bar \phi \frac{\bar C^n K_{\epsilon}^n x^n}{n!},
\end{equation}
then it follows that
$
\left| \Phi_i(\Delta  F^n) (x,\xi) \right| \leq \bar \phi \frac{\bar C^{n+1} K_{\epsilon}^{n+1} x^{n+1}}{(n+1)!}$.
\end{lemma}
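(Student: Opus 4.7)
The plan is to prove the bound by a direct computation that feeds the inductive hypothesis into the two pieces $I_j$ and $Q_j$ of $\Phi_j$, then absorbs the arithmetic into the combined constant $\bar C$. A key observation is that the inductive bound $|\Delta F_i^n(x,\xi)| \leq \bar\phi (\bar C K_\epsilon)^n x^n / n!$ depends on $x$ only, not on $\xi$, which is what makes the integration along characteristics behave cleanly.

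First I would handle the integral operator $I_j[\Delta F^n]$. Plugging the inductive bound into the definition of $I_j$ and pulling $\bar\phi (\bar C K_\epsilon)^n / n!$ and the uniform bound $\bar C_{ji}$ out of the integral reduces the estimate to controlling $\int_0^{s_j^F(x,\xi)} x_j^n(x,\xi,s) \, ds$. This is exactly the content of Lemma~\ref{lem-int}, which gives $K_\epsilon x^{n+1}/(n+1)$. Summing over $i$ I obtain
\begin{equation}
|I_j[\Delta F^n](x,\xi)| \leq \Bigl(\sum_i \bar C_{ji}\Bigr) \bar\phi \, \frac{\bar C^n K_\epsilon^{n+1} x^{n+1}}{(n+1)!}.
\end{equation}

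Next I would dispose of the boundary operator $Q_j$. Since $Q_2 = Q_3 = 0$, only $j=1,4$ matter, and $Q_1,Q_4$ are linear combinations with coefficients $q_i(x_j(x,\xi,0))$ of $I_2$ and $I_3$ evaluated at $(x_j(x,\xi,0),0)$. Using $|q_i| \leq \bar q_i$ and $x_j(x,\xi,0) \leq x$ (which follows from the inequalities after Lemma 3), the estimate just established for $I_2,I_3$ applies with $x$ replaced by $x_j(x,\xi,0) \leq x$, giving, for instance,
\begin{equation}
|Q_1[\Delta F^n](x,\xi)| \leq \Bigl(\bar q_1 \sum_i \bar C_{2i} + \bar q_2 \sum_i \bar C_{3i}\Bigr) \bar\phi \, \frac{\bar C^n K_\epsilon^{n+1} x^{n+1}}{(n+1)!},
\end{equation}
and analogously for $Q_4$.

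Finally I would add the two contributions. The combined coefficient in front of $\bar\phi \, \bar C^n K_\epsilon^{n+1} x^{n+1}/(n+1)!$ is $\sum_i \bar C_{ji}$ plus at most $\sum_k \bar q_k \sum_i \bar C_{ki}$, which for every $j \in \{1,2,3,4\}$ is bounded by $\bigl(1+\sum_i \bar q_i\bigr)\bigl(\sum_{k,i} \bar C_{ki}\bigr) = \bar C$. This upgrades $\bar C^n$ to $\bar C^{n+1}$ and yields exactly the claimed bound. There is no real obstacle beyond careful bookkeeping; the one point to watch is that the coefficient $\bar C$ was defined precisely so as to dominate this $Q_j+I_j$ aggregate uniformly in $j$.
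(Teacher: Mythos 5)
Your proposal is correct and follows essentially the same route as the paper: split $\Phi_j$ into $I_j$ plus $Q_j$, insert the inductive bound (which depends only on $x$), invoke Lemma~\ref{lem-int} to gain the factor $K_\epsilon x^{n+1}/(n+1)$, use $x_j(x,\xi,0)\leq x$ for the boundary terms, and absorb the resulting coefficients into $\bar C$ via its definition. The only difference is presentational — the paper writes out the cases $i=1,2$ explicitly while you argue uniformly in $j$ — and your constant bookkeeping at the end matches the paper's.
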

\begin{proof}
We show it for $i=1,2$; the structure of the equations is the same for $i=3,4$. For $i=2$:
\begin{eqnarray}
\left| \Phi_2[\Delta  F^n](x,\xi)\right|&=&\left| I_2[\Delta  F^n](x,\xi)\right|
\leq
 \sum_{i=1}^4  \bar C_{2i} \int_0^{s^F_2(x,\xi)} \left| \Delta F^n_i\left(x_2(x,\xi,s),\xi_2(x,\xi,s)
 \right) ds \right|
  \nonumber \\ &\leq &\bar \phi
\frac{K_{\epsilon}^n \bar C^{n}}{n!} \sum_{i=1}^4 \bar C_{2i} \int_0^{s^F_2(x,\xi)} x_2^n(x,\xi,s) ds
\leq \bar \phi
\frac{K_{\epsilon}^{n+1} \bar C^{n+1} x^{n+1} }{(n+1)!},
 \end{eqnarray}
 where Lemma~\ref{lem-int} has been applied. Similarly, for $i=1$:
 \begin{eqnarray}
 \Phi_1[\Delta  F^n](x,\xi) &\leq& \left| Q_1[\Delta  F^n](x,\xi)\right|+\left| I_1[\Delta  F^n](x,\xi)\right|
\nonumber \\ &\leq&
 \bar q_1
 \bar \phi \frac{\bar C^n K_{\epsilon}^n}{n!}
 \sum_{i=1}^4 \bar C_{2i} \int_0^{s^F_2(x_1(x,\xi,0),0)}
x_2^n(x_1(x,\xi,0),0,s) ds
\nonumber \\ &&
+\bar q_2 \bar \phi  \frac{\bar C^n K_{\epsilon}^n}{n!} \sum_{i=1}^4 \bar C_{3i} \int_0^{s^F_3(x_1(x,\xi,0),0)}
x_3^n(x_1(x,\xi,0),0,s) ds
\nonumber \\ &&
+\bar \phi  \frac{\bar C^n K_{\epsilon}^n}{n!}
\sum_{i=1}^4 \bar C_{1i} \int_0^{s^F_1(x,\xi)}x_1^n(x,\xi,s)ds
 \nonumber \\ &\leq&
 \bar q_1
 \bar \phi \frac{\bar C^n K_{\epsilon}^{n+1}}{n!}
 \sum_{i=1}^4 \bar C_{2i} \frac{x_1(x,\xi,0)^n}{n+1}
+\bar q_2 \bar \phi  \frac{\bar C^n K_{\epsilon}^{n+1}}{n!} \sum_{i=1}^4 \bar C_{3i} \frac{x_1(x,\xi,0)^n}{n+1}
\nonumber \\ &&
+\bar \phi  \frac{\bar C^n K_{\epsilon}^{n+1}}{n!}
\sum_{i=1}^4 \bar C_{1i} \frac{x^n}{n+1}
 \leq \bar \phi \frac{\bar C^{n+1} K_{\epsilon}^{n+1}x^{n+1}}{(n+1)!} ,
\end{eqnarray}
since $x_i(x,\xi,0)\leq x$. Thus the lemma is proved.
\end{proof}

Next we show that (\ref{eqn-Fseriesdef}) converges.
\begin{proposition}\label{prop-conv}
For $\Delta F_i^n(x,\xi)$, $i=1,2,3,4$, one has that
\begin{equation}
\left| \sum_{n=0}^{\infty} \Delta F_i^n(x,\xi) \right| \leq
\bar \phi \mathrm{e}^{\bar C K_{\epsilon} x}.
\end{equation}
\end{proposition}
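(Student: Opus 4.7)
The plan is to prove the bound by a straightforward induction on $n$, exploiting the fact that the recursion $\Delta F^n = \Phi[\Delta F^{n-1}]$ together with Lemma~\ref{lemma-bound} gives exactly the factorial decay required for an exponential majorant series.

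First, I would verify the base case $n=0$. Since $\Delta F^0 = \varphi$ by definition, and by the definition of $\bar\phi$ we have $|\varphi_i(x,\xi)| \leq \bar\phi$ uniformly on $\mathcal T$ for $i=1,2,3,4$, the claim $|\Delta F_i^0(x,\xi)| \leq \bar\phi \frac{\bar C^0 K_\epsilon^0 x^0}{0!} = \bar\phi$ holds.

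Next, I would run the inductive step. Assuming the bound $|\Delta F_i^n(x,\xi)| \leq \bar\phi \frac{\bar C^n K_\epsilon^n x^n}{n!}$ holds for all $i=1,2,3,4$ and all $(x,\xi)\in\mathcal T$, the identity $\Delta F^{n+1} = \Phi[\Delta F^n]$ combined with Lemma~\ref{lemma-bound} immediately yields
\begin{equation}
|\Delta F_i^{n+1}(x,\xi)| = |\Phi_i[\Delta F^n](x,\xi)| \leq \bar\phi \frac{\bar C^{n+1} K_\epsilon^{n+1} x^{n+1}}{(n+1)!},
\end{equation}
closing the induction.

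Finally, summing over $n$ and comparing to the exponential series gives
\begin{equation}
\sum_{n=0}^\infty |\Delta F_i^n(x,\xi)| \leq \bar\phi \sum_{n=0}^\infty \frac{(\bar C K_\epsilon x)^n}{n!} = \bar\phi\, \mathrm{e}^{\bar C K_\epsilon x},
\end{equation}
which bounds the absolute value of the series uniformly on $\mathcal T$ (by $\bar\phi\, \mathrm{e}^{\bar C K_\epsilon}$, since $x\leq 1$) and in particular proves absolute and uniform convergence, establishing the stated inequality.

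There is no real obstacle here: all the analytic work (the characteristic-line integration, the estimate $\int_0^{s_i^F} x_i^n\,ds \leq K_\epsilon x^{n+1}/(n+1)$, and the propagation of the factorial decay through $\Phi$) is already packaged in Lemmas~\ref{lem-int} and \ref{lemma-bound}. The only thing this proposition adds is the induction setup and the observation that the resulting majorant is the exponential series.
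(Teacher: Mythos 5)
Your proof is correct and follows essentially the same route as the paper: induction on $n$ with the bound $|\Delta F_i^n| \leq \bar\phi\, \bar C^n K_\epsilon^n x^n / n!$, base case from the definition of $\bar\phi$, inductive step via Lemma~\ref{lemma-bound}, and comparison with the exponential series. The paper's proof is identical in structure, just more terse.
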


\begin{proof}
The result follows if we show that
$
\left| \Delta F_i^n(x,\xi) \right| \leq
\bar \phi \frac{\bar C^{n} K_{\epsilon}^{n} x^{n}}{n!}$.
We prove the bound by induction. For $n=0$, the result follows from (\ref{eqn-Fn}). Assume that the bound is correct for all $i$ in $\Delta  F^n(x,\xi)$. Then, we get for $\Delta F_i^{n+1}(x,\xi)$ that
\begin{equation}
\left| \Delta F_i^{n+1} (x,\xi) \right|
=\left| \Phi_i [\Delta  F^{n}] (x,\xi) \right|
\leq
\bar \phi \frac{\bar C^{n+1} K_{\epsilon}^{n+1} x^{n+1}}{(n+1)!},
\end{equation}
where we have used Lemma~\ref{lemma-bound}. Thus the proposition follows.
\end{proof}

From Proposition~\ref{prop-conv} we conclude that the successive approximation series is bounded and converges uniformly. Thus, a bounded solution to Equations~(\ref{eqn-hypF1})--(\ref{eqn-bcF4}) exists. This proves the existence part of  Theorem~\ref{th-wp}.

To prove uniqueness, let us denote by $ F(x,\xi)$ and $ F'(x,\xi)$ two different solutions to (\ref{eqn-hypF1})--(\ref{eqn-bcF4}). Defining $ {\tilde F}(x,\xi)= F(x,\xi)- F'(x,\xi)$. By linearity of (\ref{eqn-hypF1})--(\ref{eqn-bcF4}), $ {\tilde F}(x,\xi)$ also verifies (\ref{eqn-hypF1})--(\ref{eqn-bcF4}), with $h_i=0$ for all $i$. Then $\bar \phi=0$ for ${\tilde F}(x,\xi)$, and Proposition~\ref{prop-conv} we conclude $ {\tilde F}(x,\xi)=0$, which implies that $ F(x,\xi)= F'(x,\xi)$.

To prove that the solution is continuous, note that since (\ref{eqn-Fseriesdef}) converges uniformly, one only needs to prove continuity of each term. First, $\Delta F_0=\varphi_i\in \mathcal C(\mathcal T)$ since the $\varphi_i$ are defined as a sum of compositions of continuous functions. Similarly, since $\Delta F_{n}$ is defined as the integral (with continuous limits) of continuous functions times the previous $\Delta F_{n-1}$ composed with continuous functions, by induction it can be shown that $\Delta F_n\in \mathcal C(\mathcal T)$. Thus $F\in \mathcal C(\mathcal T)$ and Theorem~\ref{th-wp} is proved.

\subsection{Smoothness of solutions}
Next we sketch the proof of Theorem~\ref{th-sm}. We only consider $N=1$; for $N\geq1$  the result can be proven by induction.
Denote $G_i=\frac{\partial F_i}{\partial x}(x,\xi)$ and $H_i=\frac{\partial F_i}{\partial \xi}(x,\xi)$. By  differentiating with respect to $x$ and $\xi$ in (\ref{eqn-hypF1})--(\ref{eqn-hypF4}), we find two uncoupled $4 \times 4$ hyperbolic systems  (for $G_i$ and for $H_i$)
\begin{eqnarray}\label{eqn-hypG1}
\epsilon_1(x)G^1_x+\epsilon_1(\xi)G^1_\xi&=&-\epsilon_1'(x)F^1+\frac{\partial g_1}{\partial x}(x,\xi)+\sum_{i=1}^4 \frac{\partial C_{1i}}{\partial x}(x,\xi)  F^i(x,\xi)
\nonumber \\ &&
+\sum_{i=1}^4 C_{1i} (x,\xi)  G^i(x,\xi),\qquad\\
\epsilon_1(x)G^2_x-\epsilon_2(\xi)G^2_\xi&=&-\epsilon_1'(x)F^2+\frac{\partial g_2}{\partial x}(x,\xi)+\sum_{i=1}^4 \frac{\partial C_{2i}}{\partial x}(x,\xi)F^i(x,\xi)
\nonumber \\ &&
+\sum_{i=1}^4 C_{2i}(x,\xi) G^i(x,\xi),\\
\epsilon_2(x)G^3_x-\epsilon_1(\xi)G^3_\xi&=&-\epsilon_2'(x)F^3+\frac{\partial g_3}{\partial x}(x,\xi)+\sum_{i=1}^4 \frac{\partial C_{3i}}{\partial x}(x,\xi) F^i(x,\xi)
\nonumber \\ &&
+\sum_{i=1}^4 C_{3i}(x,\xi) G^i(x,\xi),\\
\epsilon_2(x)G^4_x+\epsilon_2(\xi)G^4_\xi&=&-\epsilon_2'(x)F^4+\frac{\partial g_4}{\partial x}(x,\xi)+\sum_{i=1}^4 \frac{\partial C_{4i}}{\partial x}(x,\xi) F^i(x,\xi)
\nonumber \\ &&
+\sum_{i=1}^4 C_{4i}(x,\xi) G^i(x,\xi), \label{eqn-hypG4}
 \\
\label{eqn-hypH1}
\epsilon_1(x)H^1_x+\epsilon_1(\xi)H^1_\xi&=&-\epsilon_1'(\xi)F^1+\frac{\partial g_1}{\partial \xi}(x,\xi)+\sum_{i=1}^4 \frac{\partial C_{1i}}{\partial \xi}(x,\xi)  F^i(x,\xi)
\nonumber \\ &&
+\sum_{i=1}^4 C_{1i} (x,\xi)  H^i(x,\xi),\qquad\\
\epsilon_1(x)H^2_x-\epsilon_2(\xi)H^2_\xi&=&-\epsilon_1'(\xi)F^2+\frac{\partial g_2}{\partial \xi}(x,\xi)+\sum_{i=1}^4 \frac{\partial C_{2i}}{\partial \xi}(x,\xi)F^i(x,\xi)
\nonumber \\ &&
+\sum_{i=1}^4 C_{2i}(x,\xi) H^i(x,\xi),\\
\epsilon_2(x)H^3_x-\epsilon_1(\xi)H^3_\xi&=&-\epsilon_2'(\xi)F^3+\frac{\partial g_3}{\partial \xi}(x,\xi)+\sum_{i=1}^4 \frac{\partial C_{3i}}{\partial \xi}(x,\xi) F^i(x,\xi)
\nonumber \\ &&
+\sum_{i=1}^4 C_{3i}(x,\xi) H^i(x,\xi),\\
\epsilon_2(x)H^4_x+\epsilon_2(\xi)H^4_\xi&=&-\epsilon_2'(\xi)F^4+\frac{\partial g_4}{\partial \xi}(x,\xi)+\sum_{i=1}^4 \frac{\partial C_{4i}}{\partial \xi}(x,\xi) F^i(x,\xi)
\nonumber \\ &&
+\sum_{i=1}^4 C_{4i}(x,\xi) H^i(x,\xi). \label{eqn-hypH4}
\end{eqnarray}

Now, differentiating the boundary conditions in (\ref{eqn-bcF2}) it is found that:
\begin{eqnarray}
G^2(x,x)+H^2(x,x)&=&h_2'(x), \quad
G^3(x,x)+H^3(x,x)=h_3'(x),
\end{eqnarray}
and setting $x=\xi$ in (\ref{eqn-hypF2})--(\ref{eqn-hypF3})
\begin{eqnarray}
\epsilon_1(x)G^2(x,x)-\epsilon_2(x)H^2(x,x)&=&g_2(x,x)+\sum_{i=1}^4 C_{2i}(x,x) F^i(x,x),\\
\epsilon_2(x)G^3(x,x)-\epsilon_1(\xi)H^3(x,x)&=&g_3(x,x)+\sum_{i=1}^4 C_{3i}(x,x) F^i(x,x),
\end{eqnarray}
we can find a set of boundary conditions for $G^j$ and $H^j$, $j=2,3$, at the boundary $x=\xi$:
\begin{eqnarray} \label{eqn-bcG1}
G^2(x,x)&=&\frac{\epsilon_2(x)h_2'(x)+ g_2(x,x)+\sum_{i=1}^4 C_{2i} (x,x)  F^i(x,x)}{\epsilon_2(x)+\epsilon_1(x)},\\
G^3(x,x)&=&\frac{\epsilon_1(x) h_3'(x)+g_3(x,x)+\sum_{i=1}^4 C_{3i} (x,x)  F^i(x,x)}{\epsilon_2(x)+\epsilon_1(x)},\\
H^2(x,x)&=&\frac{\epsilon_1(x)h_2'(x)- g_2(x,x)-\sum_{i=1}^4 C_{2i} (x,x)  F^i(x,x)}{\epsilon_2(x)+\epsilon_1(x)},\\
H^3(x,x)&=&\frac{\epsilon_2(x) h_3'(x)- g_3(x,x)-\sum_{i=1}^4 C_{3i} (x,x)  F^i(x,x)}{\epsilon_2(x)+\epsilon_1(x)},
\end{eqnarray}

Similarly, differentiating  boundary conditions (\ref{eqn-bcF1}) and (\ref{eqn-bcF4}) we find boundary conditions for $G^1$ and $G^4$ at $\xi=0$:
\begin{eqnarray} \label{eqn-bcGH1}
G^1(x,0)&=&h_1'(x)+q_1'(x)F^2(x,0)+q_2'(x)F^3(x,0)+q_1(x)G^2(x,0)
\nonumber \\&&
+q_2(x)G^3(x,0),\\
G^4(x,0)&=&h_4'(x)+q_3'(x)F^2(x,0)+q_4'(x)F^3(x,0)+q_3(x)G^2(x,0)\nonumber \\&&+q_4(x)G^3(x,0),\label{eqn-bcGh4}
\end{eqnarray}
and setting $\xi=0$ in (\ref{eqn-hypF1})--(\ref{eqn-hypF4})
we can also find two sets of boundary conditions for $H^j$, $j=1,4$, at the boundary $\xi=0$:\begin{eqnarray} \label{eqn-bcH1}
H^1(x,0)&=&\frac{g_1(x,0)+\sum_{i=1}^4 C_{1i} (x,0)  F^i(x,0)}{\epsilon_1(0)}-\frac{\epsilon_1(x)}{\epsilon_1(0)} \Bigg(h_1'(x)+q_1'(x)F^2(x,0)
 \nonumber \\ && \left. +q_2'(x)F^3(x,0)
+\frac{q_1(x)}{\epsilon_1(x)}\left(\epsilon_2(0)H^2(x,0)+g_2(x,0)+\sum_{i=1}^4 C_{2i} (x,0)  F^i(x,0)\right)
\right. \nonumber \\ && \left.
+\frac{q_2(x)}{\epsilon_2(x)}\left(\epsilon_1(0)H^3(x,0)+g_3(x,0)+\sum_{i=1}^4 C_{3i} (x,0)  F^i(x,0)\right)\right),\\
H^4(x,0)&=&\frac{g_4(x,0)+\sum_{i=1}^4 C_{4i} (x,0)  F^i(x,0)}{\epsilon_2(0)}-\frac{\epsilon_2(x)}{\epsilon_2(0)} \Bigg(h_4'(x)+q_3'(x)F^2(x,0)
 \nonumber \\ && \left. +q_4'(x)F^3(x,0)
+\frac{q_3(x)}{\epsilon_1(x)}\left(\epsilon_2(0)H^2(x,0)+g_2(x,0)+\sum_{i=1}^4 C_{2i} (x,0)  F^i(x,0)\right)
\right. \nonumber \\ && \left.
+\frac{q_4(x)}{\epsilon_2(x)}\left(\epsilon_1(0)H^3(x,0)+g_3(x,0)+\sum_{i=1}^4 C_{3i} (x,0)  F^i(x,0)\right)\right).\label{eqn-bcH4}
\end{eqnarray}
Thus, both the $G^i$'s and $H^i$'s verify equations formally equivalent to (\ref{eqn-hypF1})--(\ref{eqn-bcF4}), with derivatives of the old equations' coefficients as new coefficients, and the $F^i$'s as additional terms. If these equations have solutions, then the solutions must be the partial derivatives of the $F^i$ functions.

Now, under the assumptions of Theorem~\ref{th-sm}, by Theorem~\ref{th-wp} there is a (at least) continuous solution $F^i(x,\xi)$. Plugging that solution into the equations we just derived for the $G^i$'s and $H^i$'s, one obtains equations whose coefficients and boundary conditions are (at least) continuous. Hence Theorem~\ref{th-wp} can be applied implying that the $G^i$'s and $H^i$'s are continuous. Thus $F^i(x,\xi)\in\mathcal C^1\left(\mathcal T\right)$, proving the result.

\section{Technical results}

Next we give some technical lemmas used throughout the paper. The first lemma follows from the fact that the control direct and inverse kernels are $\mathcal C^2 (\mathcal T)$ functions.
\begin{lemma}
\begin{eqnarray}
\vert \mathcal K[\gamma] \vert &\leq& C_1 \left(
\vert \gamma\vert+\Vert \gamma \Vert_{L^1}
 \right),\\
\vert \mathcal L[\gamma]\vert
&\leq& C_2 \left(
\vert \gamma\vert+\Vert \gamma \Vert_{L^1}
 \right),\\
\vert \mathcal K_{1}[\gamma]\vert
&\leq& C_3 \left(
\vert \gamma\vert+\Vert \gamma \Vert_{L^1}
 \right),\\
 \vert \mathcal K_{2}[\gamma]\vert
&\leq& C_4 \left(
\vert \gamma\vert+\Vert \gamma \Vert_{L^1}
 \right),\\
\vert  \mathcal L_{1}[\gamma]\vert
&\leq& C_5 \left(
\vert \gamma\vert+\Vert \gamma \Vert_{L^1}
 \right),\\
\vert  \mathcal L_{11}[\gamma]\vert
&\leq& C_6 \left(
\vert \gamma\vert+\Vert \gamma \Vert_{L^1}
 \right).
\end{eqnarray}
\end{lemma}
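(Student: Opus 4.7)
The plan is to exploit the fact, stated in the sentence preceding the lemma, that both the direct kernel $K(x,\xi)$ and the inverse kernel $L(x,\xi)$ belong to $\mathcal{C}^2(\mathcal{T})$. Since $\mathcal{T}=\{(x,\xi):0\leq\xi\leq x\leq 1\}$ is compact, this regularity implies that $K$, $L$, their first partials $K_x,K_\xi,L_x,L_\xi$, and the second partial $L_{xx}$ that appears in $\mathcal{L}_{11}$ are all uniformly bounded on $\mathcal{T}$. In particular, the boundary traces $K(x,x)$, $L(x,x)$, $L_x(x,x)$, $L_\xi(x,x)$ are continuous functions of $x\in[0,1]$ and hence bounded. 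I would begin the proof by invoking Theorem~\ref{th-sm} (applied with $N=2$ under the smoothness hypotheses of Section~\ref{sect-prob}) to record a single constant $M=\max\{\Vert K\Vert_\infty,\Vert L\Vert_\infty,\Vert K_x\Vert_\infty,\Vert K_\xi\Vert_\infty,\Vert L_x\Vert_\infty,\Vert L_{xx}\Vert_\infty,\sup_x |K(x,x)|,\sup_x |L(x,x)|,\sup_x|L_x(x,x)|,\sup_x|L_\xi(x,x)|\}$.

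Each of the six functionals in the statement is, by definitions (\ref{eqn-calK})--(\ref{eqn-calLx}), a sum of (i) a pointwise term of the form $A(x)\gamma(x,t)$, where $A$ is a bounded continuous matrix-valued function (possibly zero, as in $\mathcal{K}$ and $\mathcal{L}$), and (ii) an integral term of the form $\int_0^x B(x,\xi)\gamma(\xi,t)\,d\xi$, where $B\in\mathcal{C}(\mathcal{T})$. The pointwise term is bounded by $M|\gamma(x,t)|=M|\gamma|$, while the integral term is bounded by
\[
\left|\int_0^x B(x,\xi)\gamma(\xi,t)\,d\xi\right|\leq M\int_0^x |\gamma(\xi,t)|\,d\xi\leq M\Vert\gamma\Vert_{L^1}.
\]
Adding the two contributions and collecting the constants yields the desired inequality $|\mathcal{K}[\gamma]|\leq C_1(|\gamma|+\Vert\gamma\Vert_{L^1})$, and identical bookkeeping yields the bounds for $\mathcal{L}$, $\mathcal{K}_1$, $\mathcal{K}_2$, $\mathcal{L}_1$, $\mathcal{L}_{11}$. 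I would carry out the six cases in parallel, simply tabulating, for each functional, the boundary coefficient (whose sup-norm determines the pointwise constant) and the integrand (whose sup-norm determines the integral constant), and defining the $C_i$'s accordingly.

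There is essentially no technical obstacle in this argument: once the $\mathcal{C}^2$ regularity of the kernels is cited, the rest is a one-line sup-norm estimate applied six times. The only subtlety worth flagging is that the pointwise and $L^1$ contributions must be kept separate rather than absorbed into an $L^\infty$ norm, because elsewhere in the paper these bounds are integrated against $\gamma$ and the $L^1$ structure is what eventually produces the $L^2$ and Sobolev estimates via Cauchy--Schwarz; so I would state the bounds exactly in the additive form $|\gamma|+\Vert\gamma\Vert_{L^1}$ without attempting to simplify them.
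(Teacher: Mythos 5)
Your proposal is correct and matches the paper's intent: the paper justifies this lemma with the single remark that the direct and inverse kernels are $\mathcal C^2(\mathcal T)$ (via Theorem~\ref{th-sm}), and your argument is precisely the routine expansion of that remark --- bound each pointwise term by the sup-norm of the (continuous, hence bounded on the compact triangle $\mathcal T$) kernel traces, and each integral term by $\Vert\cdot\Vert_\infty$ of the integrand times $\Vert\gamma\Vert_{L^1}$. Nothing further is needed.
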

The next lemma is based on the fact that, since $\Lambda_{NL}(u,x)$ is twice differentiable with respect to $u$ and $x$, and since we have $\Lambda_{NL}(0,x)=0$, it follows that there exists a  $\delta_{\Lambda}$ and $K_1$, $K_2$, $K_3$ such that if $\vert u \vert \leq \delta_{\Lambda}$, then, for any $v,w \in \mathbb R^2$, it holds that
\begin{eqnarray}
\vert \Lambda_{NL}(u,x) \vert
+\left| \frac{\partial \Lambda_{NL}(u,x)}{\partial x} \right|
&\leq& K_1 \vert u \vert ,
\\
\left| \frac{\partial \Lambda_{NL}}{\partial u} (u,x) v \right|
+\left| \frac{\partial \Lambda_{NL}(u,x)}{\partial u \partial x}v \right|
&\leq& K_2 \vert v \vert ,
\\
\left| \frac{\partial^2 \Lambda_{NL}}{\partial u^2} (u,x) v w \right|&\leq& K_3 \vert v \vert \vert w \vert.\,\,\label{eqn-linearbound}
\end{eqnarray}
Similarly, since $f_{NL}(u,x)$ is twice differentiable with respect to $u$ and once with respect to $x$, and
$f_{NL}(0,x)=\frac{\partial f_{NL}}{\partial u}(0,x)=0$, there exists a  $\delta_f$ and $K_4$, $K_5$, $K_6$ such that if $\vert u \vert \leq \delta_f $, then for any $v\in \mathbb R^2$,
\begin{eqnarray}
\vert f_{NL}(u,x) \vert+\left| \frac{\partial f_{NL}}{\partial x} (u,x) \right| &\leq& K_4 \vert u \vert^2,\\
\left| \frac{\partial f_{NL}}{\partial u} (u,x) \right| &\leq& K_5 \vert u \vert,
\\
\left| \frac{\partial^2 f_{NL}}{\partial u^2} (u,x)v \right| &\leq& K_4 \vert v \vert.\label{eqn-quadbound}
\end{eqnarray}
Then, the following lemma holds.
\begin{lemma}\label{lem-F3F4bound}
For $\Vert \gamma\Vert_{\infty}<\min\{\delta_\Lambda,\delta_f\}$,
\begin{eqnarray}
 \vert F_1 \vert &\leq &C_5 \left(
\vert \gamma\vert+\Vert \gamma \Vert_{L^1}
 \right)\label{eqn-F1bound},\\
\vert F_2 \vert &\leq &C_6 \left(
\vert \gamma\vert^2+\Vert \gamma \Vert_{L^1}^2
 \right)\label{eqn-F2bound},\\
 \vert F_3 \vert &\leq &C_7 \left(\Vert \gamma \Vert_{L^2}\Vert+\vert \gamma\vert\right)
  \left(\Vert \gamma_x \Vert_{L^2}+ \vert \gamma_x(x)
 \right),\\
 \vert F_4\vert &\leq &C_8 \left(
\vert \gamma\vert^2+\Vert \gamma \Vert_{L^2}^2
 \right).
\end{eqnarray}
\end{lemma}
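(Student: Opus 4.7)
My plan is to handle each of the four bounds by a direct chain of substitutions, combining (i) the pointwise estimates on $\Lambda_{NL}$ and $f_{NL}$ that come from their vanishing (and in the case of $f_{NL}$, the vanishing of its first derivative) at the origin, together with (ii) the integral-operator bounds already established for $\mathcal K$, $\mathcal L$, and $\mathcal L_1$ in the preceding lemma. The smallness assumption $\|\gamma\|_\infty < \min\{\delta_\Lambda,\delta_f\}$ is used to guarantee that $|\mathcal L[\gamma](x)|$ stays below $\delta_\Lambda$ and $\delta_f$, so that the Taylor-type bounds (\ref{eqn-linearbound})--(\ref{eqn-quadbound}) apply to the composed functions.

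For (\ref{eqn-F1bound}), since $\Lambda_{NL}(0,x)=0$, inequality $|\Lambda_{NL}(u,x)|\le K_1|u|$ combined with $|\mathcal L[\gamma]|\le C_2(|\gamma|+\|\gamma\|_{L^1})$ yields the claim immediately. For $F_2$ the same idea works, but now $f_{NL}$ also has vanishing first derivative at the origin, so $|f_{NL}(u,x)|\le K_4|u|^2$; squaring the $\mathcal L[\gamma]$ bound and using $(a+b)^2\le 2(a^2+b^2)$ gives the quadratic estimate.

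For $F_3=\mathcal K[F_1[\gamma]\gamma_x]$, I first apply the $\mathcal K$-bound to obtain
\[
|F_3(x)| \;\le\; C_1\bigl(|F_1[\gamma](x)|\,|\gamma_x(x)| + \|F_1[\gamma]\,\gamma_x\|_{L^1}\bigr).
\]
The pointwise factor is controlled by (\ref{eqn-F1bound}). For the $L^1$ factor, inserting $|F_1[\gamma](\xi)|\le C_5(|\gamma(\xi)|+\|\gamma\|_{L^1})$ and applying Cauchy--Schwarz to $\int_0^1 |\gamma(\xi)||\gamma_x(\xi)|\,d\xi$ and $\int_0^1 |\gamma_x(\xi)|\,d\xi$ produces a term of the form $C\,\|\gamma\|_{L^2}\|\gamma_x\|_{L^2}$. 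Collecting both pieces and factoring gives the product structure $(\|\gamma\|_{L^2}+|\gamma(x)|)(\|\gamma_x\|_{L^2}+|\gamma_x(x)|)$. For $F_4=\mathcal K[F_1[\gamma]\mathcal L_1[\gamma]+F_2[\gamma]]$, the same $\mathcal K$-bound reduces the estimate to the pointwise and $L^1$ sizes of the two summands: $|F_1[\gamma]\,\mathcal L_1[\gamma]|$ is quadratic in $|\gamma|+\|\gamma\|_{L^1}$ by the bounds on $\mathcal L_1$ and $F_1$, and $|F_2[\gamma]|$ is quadratic by (\ref{eqn-F2bound}). Using $\|\gamma\|_{L^1}\le\|\gamma\|_{L^2}$ on $[0,1]$ to absorb the $L^1$ contributions into $L^2$ contributions yields the stated bound.

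The only step that requires any care is $F_3$, because the extra $\gamma_x$ factor inside $\mathcal K$ has no smallness associated to it, and one must recognize that the natural way to split the $L^1$ integral $\int |F_1[\gamma]||\gamma_x|\,d\xi$ is via Cauchy--Schwarz rather than $L^\infty$-$L^1$ duality, so as to land in the product form $\|\gamma\|_{L^2}\|\gamma_x\|_{L^2}$ that the subsequent Lyapunov analysis requires. The remaining manipulations are routine bookkeeping of constants, and no new analytic ingredient is needed beyond the two lemmas already cited.
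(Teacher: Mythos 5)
Your proposal is correct and follows exactly the route the paper intends: the paper states this lemma without a written proof, presenting it as a direct consequence of the preceding bounds on $\mathcal K$, $\mathcal L$, $\mathcal L_{1}$ and the Taylor-type estimates (\ref{eqn-linearbound})--(\ref{eqn-quadbound}), and your chain of substitutions (including the Cauchy--Schwarz step for the $L^1$ integral $\int_0^1 |F_1[\gamma]|\,|\gamma_x|\,d\xi$ in the $F_3$ bound and the absorption $\Vert\gamma\Vert_{L^1}\le\Vert\gamma\Vert_{L^2}$ on $[0,1]$) is precisely the intended argument. The only cosmetic caveat, shared with the paper's own statement, is that the threshold should strictly be shrunk by the constant of $\mathcal L$ so that $|\mathcal L[\gamma]|$ itself stays below $\min\{\delta_\Lambda,\delta_f\}$, which you correctly flag.
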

The next lemma follows from the previous ones.
\begin{lemma}\label{lem-F5F6bound}
For $\Vert \gamma\Vert_{\infty}<\min\{\delta_\Lambda,\delta_f\}$,
\begin{eqnarray}
 \vert F_{11}\vert &\leq &C_9 \left(
\vert \eta\vert+\Vert \eta \Vert_{L^1}
 \right),\\
\vert F_{12} \vert &\leq &C_{10} \left(
\vert \gamma_x\vert+
\vert \gamma\vert+\Vert \gamma \Vert_{L^1}
 \right),\\
 \vert F_{21} \vert &\leq &C_{11} \left(\vert \gamma\vert+\Vert \gamma \Vert_{L^1}\right)
  \left(\vert \eta\vert+\Vert \eta \Vert_{L^1}
 \right),\\
 \vert F_5 \vert &\leq &C_{12}  \left(
\vert \eta\vert+\Vert \eta \Vert_{L^2}
 \right)
 \left(
\vert \gamma\vert+\Vert \gamma \Vert_{L^2}
 \right)
+C_{14}  \left(
\vert \eta\vert+\Vert \eta \Vert_{L^2}
 \right)
 \left(
\vert \gamma_x\vert+\Vert \gamma_x \Vert_{L^2}
 \right)
 \nonumber \\ &&
 +C_{15} \vert \gamma(0) \vert \vert \eta(0) \vert
 ,\\
  \vert F_6 \vert &\leq &C_{16}
  \left(
\vert \eta\vert+\Vert \eta \Vert_{L^2}
 \right)
 \left(
\vert \gamma\vert+\Vert \gamma \Vert_{L^2}
 \right).
  \end{eqnarray}
\end{lemma}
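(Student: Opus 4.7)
The plan is to derive the five inequalities by hierarchically applying the pointwise bounds (\ref{eqn-linearbound})--(\ref{eqn-quadbound}) on $\Lambda_{NL}$, $f_{NL}$ and their derivatives, the generic functional estimates of the preceding lemma on $\mathcal{K}, \mathcal{L}, \mathcal{K}_1, \mathcal{L}_1$, and the $F_1$ bound already established in Lemma~\ref{lem-F3F4bound}. The smallness hypothesis $\Vert \gamma \Vert_\infty < \min\{\delta_\Lambda, \delta_f\}$ together with $\vert \mathcal{L}[\gamma] \vert \leq C_2(\vert \gamma \vert + \Vert \gamma \Vert_{L^1}) \leq C_2' \Vert \gamma \Vert_\infty$ ensures that the argument $\mathcal{L}[\gamma]$ of every Jacobian stays in the region where the Taylor-type bounds are valid, possibly after a small reduction of $\delta_\Lambda$ and $\delta_f$ to absorb the multiplicative factor $C_2'$.

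For the three atomic bounds, I would proceed in order. For $F_{11}$, apply $\vert \partial_u \Lambda_{NL}(u,x) v \vert \leq K_2 \vert v \vert$ with $v = \mathcal{L}[\eta]$ and the $\mathcal{L}$-estimate on $\eta$, which immediately yields the stated inequality. For $F_{12}$, split into the three summands of its defining formula and combine the bounds on $\partial_u \Lambda_{NL}$ and $\partial_x \Lambda_{NL}$ with the $\mathcal{L}_x$-estimate on $\gamma$; the raw $\vert \gamma_x \vert$ factor in the stated bound comes from the term $\partial_u \Lambda_{NL}(\cdot) \gamma_x$. For $F_{21}$, the quadratic-at-zero structure of $f_{NL}$ gives $\vert \partial_u f_{NL}(u,x) \vert \leq K_5 \vert u \vert$, producing a product $\vert \mathcal{L}[\gamma] \vert \cdot \vert \mathcal{L}[\eta] \vert$ which is handled by two applications of the $\mathcal{L}$-bound.

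The bound on $F_5$ is the most involved. I would decompose $F_5$ via (\ref{eqn-F5}) into its four constituent terms and estimate each separately. The first, $\mathcal{K}_1[F_1[\gamma] \eta]$, uses Lemma~\ref{lem-F3F4bound} for $\vert F_1 \vert$ together with the $\mathcal{K}_1$-estimate. The integral term $\int_0^x K(x,\xi) F_{12} \eta(\xi) d\xi$ is controlled by the just-derived $F_{12}$-bound and the uniform continuity of $K$ on $\mathcal{T}$; this is the source of the $\vert \gamma_x \vert + \Vert \gamma_x \Vert_{L^2}$ factor. The boundary contribution $K(x,0) \Lambda_{NL}(\gamma(0),0) \eta(0)$ is bounded by $\Vert K \Vert_\infty K_1 \vert \gamma(0) \vert \vert \eta(0) \vert$ using $\vert \Lambda_{NL}(\gamma(0),0) \vert \leq K_1 \vert \gamma(0) \vert$, producing the isolated pointwise boundary contribution. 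The last summand $\mathcal{K}[F_{11} \gamma_x]$ combines the $F_{11}$-bound just proved with $\vert \gamma_x \vert$ and the $\mathcal{K}$-estimate. The $F_6$ bound follows the same strategy applied to the three $\mathcal{K}$-integrals of (\ref{eqn-F6}), and is simpler because no boundary term appears.

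The main obstacle is not conceptual but organizational: carefully tracking, for each summand, which of the pointwise, $L^1$, or $L^2$ norm each factor contributes, and isolating the boundary contribution $\vert \gamma(0) \vert \vert \eta(0) \vert$ in $F_5$ as a separate pointwise term instead of absorbing it into interior integral norms. No new ingredient beyond the disciplined chaining of inequalities already established in the appendix is required.
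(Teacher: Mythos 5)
Your proposal is correct and follows exactly the route the paper intends: the paper offers no written proof beyond the remark that the lemma ``follows from the previous ones,'' and your term-by-term chaining of the pointwise bounds (\ref{eqn-linearbound})--(\ref{eqn-quadbound}) with the $\mathcal K$, $\mathcal L$, $\mathcal K_1$, $\mathcal L_1$ estimates and the $F_1$ bound of Lemma~\ref{lem-F3F4bound} is precisely that argument. Your observation that $\delta_\Lambda,\delta_f$ may need a harmless reduction so that $\vert\mathcal L[\gamma]\vert$ (rather than $\vert\gamma\vert$) lands in the region of validity is a detail the paper glosses over, and it is handled correctly.
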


The next lemma follows immediately from the previous lemmas and the corresponding definitions.

\begin{lemma}\label{lem-F7F8bound}
For $\Vert \gamma\Vert_{\infty}<\min\{\delta_\Lambda,\delta_f\}$,
\begin{eqnarray}
 \vert F_{13} \vert &\leq &C_{17} \left(
\vert \eta\vert^2+\Vert \eta \Vert_{L^1}^2
 \right) + C_{18} \left(
\vert \theta\vert+\Vert \theta \Vert_{L^1}
 \right)
 ,\quad\,\\
\vert F_{14}\vert &\leq &C_{19}
 \left(
\vert \eta\vert+\Vert \eta \Vert_{L^1}
 \right)
\left(1+
\vert \gamma_x\vert+
\vert \gamma\vert+\Vert \gamma \Vert_{L^1}
 \right)
  +C_{20}\left(
\vert \eta_x\vert+
\vert \eta\vert+\Vert \eta \Vert_{L^1}
 \right)
 ,\qquad \\
 \vert F_{22}\vert &\leq &C_{21} \left(\vert \gamma\vert+\Vert \gamma \Vert_{L^1}\right)
  \left(\vert \theta\vert+\Vert \theta \Vert_{L^1}
 \right)
 +C_{22}\left(
\vert \eta\vert^2+\Vert \eta \Vert_{L^1}^2
 \right)
 ,\\
 \vert F_7 \vert &\leq &C_{23}
  \left(
\vert \eta\vert^2+\Vert \eta \Vert_{L^2}^2
 \right)
 \left(1+\Vert \gamma \Vert_{\infty}+\Vert \gamma_x \Vert_{\infty} \right)
  +C_{24}
 \left(
\vert \eta\vert+\Vert \eta \Vert_{L^2}
 \right)
    \left(
\vert \eta_x\vert+\Vert \eta \Vert_{L^2}
 \right)
  \nonumber \\ &&
  +C_{25}
 \left(
\vert \gamma\vert+\Vert \gamma \Vert_{L^2}
+\Vert \gamma_x \Vert_{\infty}
 \right)
    \left(
\vert \theta\vert+\Vert \theta \Vert_{L^2}
 \right)
 +C_{26} \left(\vert \eta(0) \vert^2+\vert \gamma(0) \vert\vert \theta(0) \vert \right)
 ,\qquad \,\,\,\\
  \vert F_8\vert &\leq &C_{27}
  \left(
\vert \eta\vert^2+\Vert \eta \Vert_{L^2}^2
 \right)\left(1+\Vert \gamma \Vert_{\infty} \right)
  +C_{28}
 \left(
\vert \gamma\vert+\Vert \gamma \Vert_{L^2}
 \right)
    \left(
\vert \theta\vert+\Vert \theta \Vert_{L^2}
 \right).
  \end{eqnarray}
\end{lemma}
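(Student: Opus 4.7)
The strategy is purely mechanical: each of $F_{13}$, $F_{14}$, $F_{22}$, $F_7$, $F_8$ is defined as a sum of products of (i) derivatives of $\Lambda_{NL}$ or $f_{NL}$ evaluated at $(\mathcal L[\gamma],x)$, (ii) the linear operators $\mathcal K$, $\mathcal L$, $\mathcal K_1$, $\mathcal K_2$, $\mathcal L_1$ (and their derivatives) applied to $\gamma$, $\eta$, $\theta$, and (iii) the derivative arguments $\gamma_x$, $\eta_x$, or boundary values. The plan is to bound each factor with the tools already proved and then multiply out, so that the conclusion for each $F_i$ is a sum of the desired terms.

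\textbf{Step 1 (pointwise bounds on the outer coefficients).} For $\Vert\gamma\Vert_\infty<\min\{\delta_\Lambda,\delta_f\}$, the first of the two estimates lemmas gives $\vert\mathcal L[\gamma]\vert\leq C_2(\vert\gamma\vert+\Vert\gamma\Vert_{L^1})$, so in particular $\vert\mathcal L[\gamma](x)\vert\leq C\Vert\gamma\Vert_\infty<\delta_\Lambda$ (and $<\delta_f$) after a further shrinking of the smallness threshold. Hence the structural estimates \eqref{eqn-linearbound}--\eqref{eqn-quadbound} apply to $\Lambda_{NL}$, $f_{NL}$ and their first and second derivatives at the argument $(\mathcal L[\gamma](x),x)$; all such coefficients are bounded uniformly by an absolute constant, and the first $\gamma$-derivative of $f_{NL}$ gains an extra factor $\vert\mathcal L[\gamma]\vert\leq C(\vert\gamma\vert+\Vert\gamma\Vert_{L^1})$.

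\textbf{Step 2 (the three auxiliary quantities $F_{13}$, $F_{14}$, $F_{22}$).} For $F_{13}$, apply Step~1 to the two outer coefficients, then bound $\vert \mathcal L[\eta]\vert^2\leq C(\vert\eta\vert+\Vert\eta\Vert_{L^1})^2\leq 2C(\vert\eta\vert^2+\Vert\eta\Vert_{L^1}^2)$ and $\vert\mathcal L[\theta]\vert\leq C(\vert\theta\vert+\Vert\theta\Vert_{L^1})$; summing yields the stated bound on $F_{13}$. For $F_{14}$, the three summands are treated separately: the second derivative of $\Lambda_{NL}$ is uniformly bounded, so the first summand is controlled by $(\vert\eta\vert+\Vert\eta\Vert_{L^1})(\vert\gamma_x\vert+\vert\mathcal L_1[\gamma]\vert)$ and $\vert\mathcal L_1[\gamma]\vert\leq C(\vert\gamma\vert+\Vert\gamma\Vert_{L^1})$ by the first technical lemma; the second summand gives $\vert\eta_x\vert+\vert\mathcal L_1[\eta]\vert\leq\vert\eta_x\vert+C(\vert\eta\vert+\Vert\eta\Vert_{L^1})$; the third summand contributes only $\vert\mathcal L[\eta]\vert\leq C(\vert\eta\vert+\Vert\eta\Vert_{L^1})$, which is already present. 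For $F_{22}$, the first summand uses $\vert\mathcal L[\eta]\vert^2$ as in $F_{13}$, and the second uses $\left\vert\frac{\partial f_{NL}}{\partial\gamma}\right\vert\leq K_5\vert\mathcal L[\gamma]\vert\leq C(\vert\gamma\vert+\Vert\gamma\Vert_{L^1})$ from \eqref{eqn-quadbound} together with $\vert\mathcal L[\theta]\vert\leq C(\vert\theta\vert+\Vert\theta\Vert_{L^1})$, which is exactly the first claimed term.

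\textbf{Step 3 (the two big functionals $F_7$, $F_8$).} Now that the bounds on $F_{11}$, $F_{12}$, $F_{13}$, $F_{14}$, $F_{21}$, $F_{22}$ are in hand (via Lemma~\ref{lem-F5F6bound} for the first three and Step~2 for the latter three), write out each of the eight terms in $F_7$ and four terms in $F_8$, apply $\vert\mathcal K[\cdot]\vert$, $\vert\mathcal K_1[\cdot]\vert$, $\vert\mathcal L_1[\cdot]\vert$ bounds from the first technical lemma (which absorb integrals as $L^1$, and hence as $L^2$, norms via the Cauchy--Schwarz inequality $\Vert\cdot\Vert_{L^1}\leq\Vert\cdot\Vert_{L^2}$), and upgrade $\Vert\gamma\Vert_{L^1},\Vert\eta\Vert_{L^1}$ to $\Vert\gamma\Vert_{L^2},\Vert\eta\Vert_{L^2}$. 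The boundary traces $K(x,0)\Lambda_{NL}(\gamma(0),0)\theta(0)$ and $K(x,0)\frac{\partial\Lambda_{NL}}{\partial\gamma}(\gamma(0),0)\eta(0)\eta(0)$ produce the $\vert\gamma(0)\vert\vert\theta(0)\vert$ and $\vert\eta(0)\vert^2$ terms in the bound for $F_7$, using $\vert\Lambda_{NL}(\gamma(0),0)\vert\leq K_1\vert\gamma(0)\vert$. All remaining contributions fit into the templates $(\vert\eta\vert^2+\Vert\eta\Vert_{L^2}^2)(1+\Vert\gamma\Vert_\infty+\Vert\gamma_x\Vert_\infty)$, $(\vert\eta\vert+\Vert\eta\Vert_{L^2})(\vert\eta_x\vert+\Vert\eta\Vert_{L^2})$, and $(\vert\gamma\vert+\Vert\gamma\Vert_{L^2}+\Vert\gamma_x\Vert_\infty)(\vert\theta\vert+\Vert\theta\Vert_{L^2})$ by the product-rule bookkeeping described above; the factor $1+\Vert\gamma\Vert_\infty+\Vert\gamma_x\Vert_\infty$ for $F_7$ (and $1+\Vert\gamma\Vert_\infty$ for $F_8$) arises precisely from the $\gamma_x+\mathcal L_1[\gamma]$ factor inside $F_{14}$ (respectively from the coefficient of $\mathcal L_x[\gamma]$ in $F_8$).

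\textbf{Expected obstacle.} There is no substantive obstacle; the only care needed is combinatorial. The delicate points are (a) keeping track of which terms gain an extra factor of $\vert\mathcal L[\gamma]\vert$ because $\frac{\partial f_{NL}}{\partial\gamma}(0,x)=0$ (this explains why $F_{22}$ has the $(\vert\gamma\vert+\Vert\gamma\Vert_{L^1})(\vert\theta\vert+\Vert\theta\Vert_{L^1})$ structure rather than just $\vert\theta\vert+\Vert\theta\Vert_{L^1}$), and (b) identifying the correct placement of the $\vert\gamma_x\vert$ and $\vert\eta_x\vert$ factors: they enter $F_7$ only through the explicit $\mathcal K[F_{11}\eta_x]$ summand and through the $\gamma_x+\mathcal L_1[\gamma]$ and $\eta_x+\mathcal L_1[\eta]$ appearing in $F_{14}$, which is why the $\Vert\gamma_x\Vert_\infty$ appears as a \emph{multiplier} (of $\Vert\eta\Vert_{L^2}^2$) while $\vert\eta_x\vert$ appears in its own $(\vert\eta\vert+\Vert\eta\Vert_{L^2})(\vert\eta_x\vert+\Vert\eta\Vert_{L^2})$ factor. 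After this bookkeeping, matching the claimed bounds is a term-by-term verification.
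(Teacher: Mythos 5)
Your proposal is correct and follows exactly the route the paper intends: the paper gives no written proof beyond "follows immediately from the previous lemmas and the corresponding definitions," and your term-by-term expansion using the kernel-operator bounds, the structural estimates on $\Lambda_{NL}$ and $f_{NL}$, and Lemma~\ref{lem-F5F6bound} is precisely that argument spelled out. Your observation that the smallness threshold must be shrunk so that $\Vert\mathcal L[\gamma]\Vert_\infty<\min\{\delta_\Lambda,\delta_f\}$ is a legitimate (and correctly handled) refinement of an imprecision the paper glosses over.
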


Finally, the following result is crucial in establishing Theorem~\ref{thm-main}.

\begin{proposition}\label{prop-equiv}
There exists $\delta$ such that, if $\Vert \gamma \Vert_{\infty}+\Vert \eta \Vert_{\infty} < \delta$, then the following inequalities hold
\begin{eqnarray}
 \Vert \theta \Vert_{\infty} &\leq& c_1 \left(\Vert \gamma_{xx} \Vert_{\infty} +\Vert \gamma_x \Vert_{\infty}
 +\Vert \gamma \Vert_{\infty}
 \right),\\
 \Vert \theta \Vert_{L^2} &\leq& c_2\left(\Vert \gamma_{xx} \Vert_{L^2}+\Vert \gamma_x \Vert_{L^2}+\Vert \gamma \Vert_{L^2} \right),\\
 \Vert \gamma_{xx} \Vert_{\infty} &\leq& c_3\left( \Vert \theta \Vert_{\infty} +\Vert \eta \Vert_{\infty}
 +\Vert \gamma \Vert_{\infty}
 \right),\\
\Vert \gamma_{xx} \Vert_{L^2} &\leq& c_4\left( \Vert \theta \Vert_{L^2} +\Vert \eta \Vert_{L^2}
+\Vert \gamma \Vert_{L^2}
\right),
\end{eqnarray}
where  $c_1,c_2,c_3,c_4$  are positive constants.
\end{proposition}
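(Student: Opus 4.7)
The strategy is to use the hyperbolic PDE~(\ref{eqn-gammaNL2}) and its $x$- and $t$-derivatives as algebraic identities relating $\gamma_x$, $\gamma_{xx}$, $\eta$, $\eta_x$, and $\theta$. From (\ref{eqn-gammaNL2}) one immediately reads $\eta = \Sigma(x)\gamma_x - F_3[\gamma,\gamma_x] - F_4[\gamma]$, and since $\Sigma(x)$ is diagonal with entries bounded away from zero, it is invertible, giving
\[
\gamma_x = \Sigma(x)^{-1}\eta + \Sigma(x)^{-1}\bigl(F_3[\gamma,\gamma_x] + F_4[\gamma]\bigr).
\]
Differentiating (\ref{eqn-gammaNL2}) in $x$ produces $\eta_x = \Sigma'(x)\gamma_x + \Sigma(x)\gamma_{xx} - \partial_x(F_3+F_4)$, while differentiating in $t$ (or equivalently reading off (\ref{eqn-etaNL2})) yields $\theta = \Sigma(x)\eta_x - \partial_t(F_3+F_4)$. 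Substituting the first into the second produces the crucial identity
\[
\theta = \Sigma(x)^2\gamma_{xx} + \Sigma(x)\Sigma'(x)\gamma_x - \Sigma(x)\partial_x(F_3+F_4) - \partial_t(F_3+F_4),
\]
and solving for $\gamma_{xx}$ gives
\[
\gamma_{xx} = \Sigma(x)^{-2}\theta - \Sigma(x)^{-1}\Sigma'(x)\Sigma(x)^{-1}\eta + (\text{nonlinear remainders in }\gamma,\eta,\gamma_x,\eta_x).
\]

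The four inequalities then follow by estimating the nonlinear remainders with the technical lemmas already in hand. Each of $F_3$, $F_4$, and their $x$- and $t$-derivatives decomposes (using Lemmas~\ref{lem-F3F4bound} and~\ref{lem-F5F6bound}) into products of small quantities times a factor that is at most as regular as $\gamma_{xx}$ or $\eta_x$ pointwise; any such factor appears multiplied by $\Vert\gamma\Vert_\infty$ or $\Vert\eta\Vert_\infty$, which we are free to take as small as needed. For the $L^\infty$ estimates one takes a supremum and absorbs any residual term $\varepsilon\Vert\gamma_{xx}\Vert_\infty$ (respectively $\varepsilon\Vert\theta\Vert_\infty$) on the right into the left-hand side. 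For the $L^2$ estimates the same argument is applied inside the integral, using the embeddings (\ref{eqn-sobolev1})--(\ref{eqn-sobolev2}) to convert $L^\infty$ factors of $\gamma$ or $\eta$ into small constants.

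The main obstacle is handling the self-referential structure that appears when one differentiates the integral functionals $F_3$ and $F_4$. For example, $\partial_x F_3 = \partial_x \mathcal{K}[F_1[\gamma]\gamma_x]$ contains both an $x$-derivative of the kernel (producing a boundary term and an integral with $K_x$) and, through the chain rule, a contribution involving $\gamma_{xx}$ via $\partial_x(F_1[\gamma]\gamma_x)$. One must show that the coefficient of $\gamma_{xx}$ in these remainders is pointwise controlled by $C\Vert\gamma\Vert_\infty$, so that under the smallness hypothesis $\Vert\gamma\Vert_\infty+\Vert\eta\Vert_\infty<\delta$ the term can be absorbed. Once this absorption is justified, the remaining contributions are genuinely of lower order and the four stated inequalities follow, completing the proof.
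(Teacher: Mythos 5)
Your proposal is correct and follows essentially the same route as the paper: the paper proves the proposition by chaining three intermediate lemmas (Lemma~\ref{lem-gammaetaequiv} relating $\eta$ and $\gamma_x$, Lemma~\ref{lem-gammaxxetaxequiv} relating $\gamma_{xx}$ and $\eta_x$, and Lemma~\ref{lem-thetaetaequiv} relating $\eta_x$ and $\theta$), each of which is obtained exactly as you describe — by reading the relevant PDE (or its $x$- or $t$-derivative) as an algebraic identity, solving for the leading-order derivative thanks to the invertibility of $\Sigma$, and absorbing the $\varepsilon\Vert\gamma_{xx}\Vert$-type remainders under the smallness hypothesis using the $O(\Vert\gamma\Vert_\infty)$ bound on $F_1[\gamma]$. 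You present the chain directly rather than as three separate lemmas, but the computation and the absorption argument are the same.
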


The proposition is proven using a series three lemmas.

The first lemma gives a relation between the $L^2$ and infinity norms of $\eta$ and $\gamma_x$, under the assumption that $\Vert \gamma \Vert_{\infty} $ is small enough.
\begin{lemma}\label{lem-gammaetaequiv}
There exists $\delta_2$ such that, if $\Vert \gamma \Vert_{\infty} < \delta_2$, then the following inequalities hold
\begin{eqnarray}
 \Vert \eta \Vert_{\infty} &\leq& c_1 \left(\Vert \gamma_x \Vert_{\infty} +\Vert \gamma \Vert_{\infty} \right),\\
 \Vert \eta \Vert_{L^2} &\leq& c_2\left(\Vert \gamma_x \Vert_{L^2}+\Vert \gamma \Vert_{L^2}\right),\\
 \Vert \gamma_x \Vert_{\infty} &\leq& c_3\left( \Vert \eta \Vert_{\infty} +\Vert \gamma \Vert_{\infty} \right),\\
\Vert \gamma_x \Vert_{L^2} &\leq& c_4\left( \Vert \eta \Vert_{L^2} +\Vert \gamma \Vert_{L^2}\right),
\end{eqnarray}
where  $c_1,c_2,c_3,c_4$  are positive constants.
\end{lemma}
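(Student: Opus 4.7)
The plan is to use the evolution equation (\ref{eqn-gammaNL2}) directly as an algebraic relation
\begin{equation*}
\eta = \Sigma(x)\gamma_x - F_3[\gamma,\gamma_x] - F_4[\gamma],
\end{equation*}
and to exploit it in both directions, leveraging the smallness of $\|\gamma\|_\infty$ to control the nonlinear terms via Lemma~\ref{lem-F3F4bound}. The threshold $\delta_2$ will be determined at the end by collecting the smallness requirements appearing along the way.

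For the easy direction (bounding $\eta$ by $\gamma_x$ and $\gamma$), applying Lemma~\ref{lem-F3F4bound} gives $|F_3(x)| \leq C\|\gamma\|_\infty(\|\gamma_x\|_{L^2} + |\gamma_x(x)|)$ and $|F_4(x)| \leq C\|\gamma\|_\infty^2$. Taking the $\infty$-norm or the $L^2$-norm on both sides of the identity and using $\|\gamma\|_\infty \leq \delta_2$ immediately yields the two upper bounds for $\|\eta\|$.

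For the reverse direction, the task is to invert the identity in $\gamma_x$. Unfolding $F_3 = \mathcal{K}[F_1[\gamma]\gamma_x]$ via (\ref{eqn-calK}) puts the equation in the Volterra form
\begin{equation*}
(\Sigma(x) - F_1[\gamma](x))\gamma_x(x) = \eta(x) + F_4[\gamma](x) - \int_0^x K(x,\xi)F_1[\gamma](\xi)\gamma_x(\xi)\,d\xi.
\end{equation*}
Since $\Sigma(x)$ is diagonal with entries bounded away from zero and $|F_1[\gamma](x)| \leq C\|\gamma\|_\infty$ by (\ref{eqn-F1bound}), the matrix $\Sigma(x) - F_1[\gamma](x)$ is uniformly invertible once $\delta_2$ is chosen small enough; setting $M(x) = (\Sigma(x) - F_1[\gamma](x))^{-1}$ yields a second-kind Volterra equation for $\gamma_x$ with uniformly bounded $M$.

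The final step is to extract the pointwise and $L^2$ bounds from this Volterra equation. In the $\infty$-norm, the Volterra kernel $M(x)K(x,\xi)F_1[\gamma](\xi)$ has sup-norm $O(\|\gamma\|_\infty)$, and a standard Gr\"onwall argument together with $\|F_4\|_\infty \leq C\delta_2\|\gamma\|_\infty$ delivers $\|\gamma_x\|_\infty \leq c_3(\|\eta\|_\infty + \|\gamma\|_\infty)$. In the $L^2$-norm, the Cauchy--Schwarz bound $\|\int_0^\cdot K F_1[\gamma] \gamma_x\,d\xi\|_{L^2} \leq C\|\gamma\|_\infty\|\gamma_x\|_{L^2}$ lets the integral term be absorbed into the left-hand side when $\delta_2$ is small, giving $\|\gamma_x\|_{L^2} \leq c_4(\|\eta\|_{L^2} + \|\gamma\|_{L^2})$. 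The main obstacle is essentially bookkeeping: choosing a single $\delta_2$ that simultaneously guarantees (i) the applicability of Lemma~\ref{lem-F3F4bound}, (ii) the uniform invertibility of $\Sigma - F_1$, and (iii) the success of the absorption step. No deeper analytical difficulty arises beyond that.
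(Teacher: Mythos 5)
Your proposal is correct and follows essentially the same route as the paper: both proofs read (\ref{eqn-gammaNL2}) as the algebraic identity $\eta=\Sigma(x)\gamma_x-F_3[\gamma,\gamma_x]-F_4[\gamma]$, use Lemma~\ref{lem-F3F4bound} to bound the nonlinear terms by $O\left(\Vert\gamma\Vert_{\infty}\Vert\gamma_x\Vert\right)$ and $O\left(\Vert\gamma\Vert_{\infty}\Vert\gamma\Vert\right)$, and absorb the $\gamma_x$-dependent remainder for $\delta_2$ small. The only (inessential) difference is in the reverse direction, where the paper simply writes $\gamma_x=\Sigma^{-1}(x)\left(\eta+F_3+F_4\right)$ and absorbs all of $F_3$ at once, so that unfolding $F_3$ into a Volterra equation, inverting $\Sigma-F_1[\gamma]$, and invoking Gr\"onwall are not needed.
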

\begin{proof}
First, from (\ref{eqn-gammaNL2}) we see that
\begin{equation}
\eta-\Sigma(x)  \gamma_x+F_3[\gamma,\gamma_x](x)+F_4[\gamma](x)=0.
\end{equation}
Therefore, calling $\delta_1$ the value of $\delta$ in Lemma~\ref{lem-F3F4bound} and assuming $\Vert \gamma \Vert_{\infty} < \delta_1$, we can compute a bound on $ \Vert \eta \Vert_{\infty} $ as follows:
\begin{eqnarray}
 \Vert \eta \Vert_{\infty}
 &\leq& K_1  \Vert \gamma_x \Vert_{\infty}
 +\Vert F_3[\gamma,\gamma_x]\Vert_{\infty}+\Vert F_4[\gamma]\Vert_{\infty}
 \nonumber \\
 &\leq&
 K_2 \left(  \Vert \gamma_x \Vert_{\infty}
 +\Vert \gamma_x \Vert_{\infty} \Vert \gamma \Vert_{\infty}
 +\Vert \gamma \Vert_{\infty}^2 \right)
\leq
 K_3 \left(  \Vert \gamma_x \Vert_{\infty}
 +\Vert \gamma \Vert_{\infty} \right).
\end{eqnarray}
Proceeding similarly with the $L^2$ norm,
\begin{eqnarray}
 \Vert \eta \Vert_{L^2}
 &\leq& K_1  \Vert \gamma_x \Vert_{L^2}
 +\Vert F_3[\gamma,\gamma_x]\Vert_{L^2}+\Vert F_4[\gamma]\Vert_{L^2}
 \nonumber \\
 &\leq&
 K_2 \left(  \Vert \gamma_x \Vert_{L^2}
 +\Vert \gamma_x \Vert_{L^2} \Vert \gamma \Vert_{\infty}
 +\Vert \gamma \Vert_{\infty} \Vert \gamma \Vert_{L^2} \right)
\leq
 K_3 \left(  \Vert \gamma_x \Vert_{L^2}
 +\Vert \gamma \Vert_{L^2} \right).
\end{eqnarray}
For the last two inequalities, we solve for $\gamma_x$:
\begin{equation}
\gamma_x =\Sigma^{-1}(x)\left( \eta+F_3[\gamma,\gamma_x](x)+F_4[\gamma](x)\right).
\end{equation}
Remembering the definition of $\Sigma(x)$, $\bar \epsilon= \max_{x\in[0,1]}\left\{\frac{1}{\epsilon_1(x)}, \frac{1}{\epsilon_2(x)} \right\}>0$, and assuming that  $\Vert \gamma \Vert_{\infty} < \delta_1$ we obtain\begin{eqnarray}
 \Vert \gamma_x  \Vert_{\infty}
 &\leq&\bar \epsilon \left(  \Vert \eta \Vert_{\infty}
 +K_1 \Vert \gamma_x \Vert_{\infty} \Vert \gamma \Vert_{\infty}
 +K_2 \Vert \gamma \Vert_{\infty}^2 \right).\,\qquad
\end{eqnarray}
Therefore if we choose $\Vert \gamma \Vert_{\infty} < \min\left\{\delta_1,\frac{1}{2K_1\bar\epsilon}\right\}$, we reach the third inequality.
Proceeding similarly with the $L^2$ norm:
\begin{equation}
 \Vert \gamma_x  \Vert_{L^2}
\leq \bar \epsilon \left(  \Vert \eta \Vert_{L^2}
 +K_3 \Vert \gamma_x \Vert_{L^2} \Vert \gamma \Vert_{\infty}
 +K_4  \Vert \gamma \Vert_{L^2} \Vert \gamma \Vert_{\infty} \right),\quad
\end{equation}
so choosing $\Vert \gamma \Vert_{\infty} < \min\left\{\delta_1,\frac{1}{2K_3\bar\epsilon}\right\}$, we reach the fourth inequality. Therefore, choosing $\delta=\min\left\{\delta_1,\frac{1}{2K_1\bar\epsilon},\frac{1}{2K_3\bar\epsilon}\right\}$, all inequalities are verified and the lemma is proven.
\end{proof}

The next lemma gives the relation between $\eta_x$ and $\gamma_{xx}$, both in the infinity norm and the $L^2$ norm, for small $\Vert \gamma \Vert_{\infty}$:
\begin{lemma}\label{lem-gammaxxetaxequiv}
There exists $\delta$ such that, if $\Vert \gamma \Vert_{\infty} < \delta$, then the following inequalities hold
\begin{eqnarray}
 \Vert \gamma_{xx} \Vert_{\infty} &\leq& c_1 \left(\Vert \eta_x \Vert_{\infty} +\Vert \eta \Vert_{\infty}
 +\Vert \gamma \Vert_{\infty}
 \right),\\
 \Vert  \gamma_{xx} \Vert_{L^2} &\leq& c_2\left(\Vert \eta_x \Vert_{L^2}+\Vert \eta \Vert_{L^2}+\Vert \gamma \Vert_{L^2} \right),\\
 \Vert \eta_x \Vert_{\infty} &\leq& c_3\left( \Vert  \gamma_{xx} \Vert_{\infty} +\Vert \eta \Vert_{\infty}
 +\Vert \gamma \Vert_{\infty}
 \right),\\
\Vert \eta_x \Vert_{L^2} &\leq& c_4\left( \Vert  \gamma_{xx} \Vert_{L^2} +\Vert \eta \Vert_{L^2}
+\Vert \gamma \Vert_{L^2}
\right),
\end{eqnarray}
where  $c_1,c_2,c_3,c_4$  are positive constants.
\end{lemma}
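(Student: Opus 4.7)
The strategy mirrors the one used in Lemma~\ref{lem-gammaetaequiv}: extract an algebraic relation between $\eta_x$ and $\gamma_{xx}$ by differentiating the governing PDE, and then use the smallness of $\Vert \gamma \Vert_{\infty}$ (together with Lemma~\ref{lem-gammaetaequiv}) to absorb cross terms. Concretely, starting from (\ref{eqn-gammaNL2}),
\begin{equation}
\gamma_t = \Sigma(x)\gamma_x - F_3[\gamma,\gamma_x] - F_4[\gamma],
\end{equation}
I differentiate once in $x$. Since $\gamma_{tx}=\eta_x$, this yields the identity
\begin{equation}
\Sigma(x)\gamma_{xx} = \eta_x - \Sigma'(x)\gamma_x + \partial_x F_3[\gamma,\gamma_x] + \partial_x F_4[\gamma].\label{eqn-planid}
\end{equation}

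The key computation is to expand $\partial_x F_3$ and $\partial_x F_4$ explicitly using the formulas (\ref{eqn-F3})--(\ref{eqn-F4}), the definitions of $F_1,F_2$ in (\ref{eqn-F1}), and the differentiation rule for the integral operator $\mathcal{K}[h](x) = h(x)-\int_0^x K(x,\xi)h(\xi)d\xi$, which produces a boundary term $-K(x,x)h(x)$ plus an integral of $K_x(x,\xi)h(\xi)$. The crucial point is that $\partial_x F_3$ is the only term which can produce a $\gamma_{xx}$ contribution (through the factor $\gamma_x$ inside $F_3$); however, this contribution is pre-multiplied by $F_1[\gamma]$, and by (\ref{eqn-F1bound}) of Lemma~\ref{lem-F3F4bound} we have $|F_1[\gamma]|\leq C(\vert\gamma\vert+\Vert\gamma\Vert_{L^1})$. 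All remaining pieces of $\partial_x F_3$ and $\partial_x F_4$ involve only $\gamma$, $\gamma_x$, and the kernels of $\mathcal{K},\mathcal{L}$, so they are bounded (in either the $\infty$- or the $L^2$-norm) by constants times $\Vert\gamma\Vert_\infty$ applied to $|\gamma|+|\gamma_x|$-type quantities, after invoking the smoothness of the kernels given by Theorem~\ref{th-sm}.

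From here the two directions of the lemma are obtained by rearranging (\ref{eqn-planid}). For the bounds on $\gamma_{xx}$: multiplying (\ref{eqn-planid}) by $\Sigma^{-1}(x)$ (which is well-defined and bounded since $\epsilon_1,\epsilon_2>0$), I get $\gamma_{xx}$ expressed in terms of $\eta_x$, $\gamma_x$, $\gamma$, plus a ``bad'' term of the form $\Sigma^{-1}F_1[\gamma]\gamma_{xx}$ (arising from $\partial_x F_3$) whose coefficient is $O(\Vert\gamma\Vert_\infty)$; choosing $\delta$ so that this coefficient is less than $1/2$ lets me absorb the term into the left-hand side, and then Lemma~\ref{lem-gammaetaequiv} is used to replace the $\gamma_x$ terms by $\eta+\gamma$ contributions. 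For the bounds on $\eta_x$: I read (\ref{eqn-planid}) as an expression for $\eta_x$ in terms of $\gamma_{xx}$, $\gamma_x$, $\gamma$ and again use Lemma~\ref{lem-gammaetaequiv} to trade $\gamma_x$ for $\eta+\gamma$. The $L^2$ estimates follow along identical lines, replacing sup-norm bounds by the corresponding $L^2$-norm bounds and using the continuity (hence $L^\infty$-boundedness) of the kernels of $\mathcal{K}$ and $\mathcal{L}$.

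The main obstacle I expect is the careful bookkeeping of $\partial_x F_3$: since $F_3 = \mathcal{K}[F_1[\gamma]\gamma_x]$ and $F_1$ itself depends on $\mathcal{L}[\gamma]$, differentiation in $x$ produces three types of contribution that must be separately controlled, namely (i) the boundary term $-K(x,x)F_1[\gamma](x)\gamma_x(x)$ from the Leibniz rule on $\mathcal{K}$, (ii) the term $F_1[\gamma]\gamma_{xx}$ that must be absorbed into the left-hand side, and (iii) a term involving $\frac{\partial \Lambda_{NL}}{\partial\gamma}\mathcal{L}_1[\gamma]\gamma_x$ coming from the chain rule on $F_1[\gamma]=\Lambda_{NL}(\mathcal{L}[\gamma],x)$. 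Each of these must be bounded using the estimates in Lemma~\ref{lem-F3F4bound} (and the analogous estimates contained in Lemma~\ref{lem-F5F6bound}), after which the smallness of $\Vert\gamma\Vert_\infty$ closes the argument.
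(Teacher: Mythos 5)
Your proposal is correct and follows essentially the same route as the paper: differentiate (\ref{eqn-gammaNL2}) in $x$, isolate the $F_1[\gamma]\gamma_{xx}$ term (whose coefficient is $O(\Vert\gamma\Vert_\infty)$ by (\ref{eqn-F1bound})) and absorb it for small $\Vert\gamma\Vert_\infty$, bound the remaining contributions of $\partial_x F_3$ and $\partial_x F_4$ (which the paper collects into functionals $F_{32}$, $F_{42}$) by lower-order quantities, and finish with Lemma~\ref{lem-gammaetaequiv} to trade $\gamma_x$ for $\eta$ and $\gamma$. Your three-way decomposition of $\partial_x F_3$ matches the paper's $F_1[\gamma]\gamma_{xx}+F_{12}[\gamma,\gamma_x]\gamma_x+\mathcal K_2[F_1[\gamma]\gamma_x]$ exactly.
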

\begin{proof}
Taking an $x$-derivative in (\ref{eqn-gammaNL2}):
\begin{equation}
\eta_x-\Sigma'(x)  \gamma_x-\Sigma(x)  \gamma_{xx}+F_1[\gamma]  \gamma_{xx}+
F_{32}[\gamma,\gamma_x]+F_{42}[\gamma,\gamma_x]=0,
\end{equation}
where $F_{32}$ and $F_{42}$ are defined as:
\begin{eqnarray}
F_{32}&=&\mathcal K_{2} \left[ F_1[\gamma]\gamma_x\right]
+F_{12}[\gamma,\gamma_x]\gamma_x,\\
F_{42}&=&\mathcal K_{2} \left[  F_1[\gamma] \mathcal L_{1} \left[\gamma \right] +F_2[\gamma] \right]
+F_{12}[\gamma,\gamma_x]\mathcal L_{1} \left[\gamma \right]
\nonumber \\ &&
+F_1[\gamma] \mathcal L_{11} \left[\gamma \right]+F_1[\gamma] L(x,x) \gamma_x+F_{23}[\gamma,\gamma_x] \gamma_x
,
\end{eqnarray}
where
\begin{equation}
F_{23}[\gamma,\gamma_x] =\frac{\partial f_{NL}}{\partial x}\left(\mathcal L[\gamma],x\right)+\frac{\partial f_{NL}}{\partial \gamma}\left(\mathcal L[\gamma],x\right)\left(\gamma_x+\mathcal L_{1}[\gamma]\right).
\end{equation}

These functionals verify the following bound, similar to the bounds developed in Lemma~\ref{lem-F3F4bound}, if $\Vert \gamma\Vert_{\infty}<\min\{\delta_\Lambda,\delta_f\}$:
\begin{eqnarray}
 \vert F_{32} \vert &\leq &C_1 \left(\Vert \gamma \Vert_{L^2}+\vert \gamma\vert\right)
  \left(\Vert \gamma_x \Vert_{L^2}+ \vert \gamma_x\vert
 \right)
 +C_2 \vert \gamma_x\vert^2,\\
  \vert F_{42} \vert &\leq &C_4 \left(\Vert \gamma \Vert_{L^2}+\vert \gamma\vert\right)
  \left(\Vert \gamma_x \Vert_{L^2}+ \vert \gamma_x\vert
 \right).\qquad
\end{eqnarray}
Therefore, using Lemma~\ref{lem-gammaetaequiv}, and inequality (\ref{eqn-F1bound}), and making $\Vert \gamma \Vert_{\infty}$ small enough, we can compute the bounds as in the proof of Lemma~\ref{lem-gammaetaequiv}.
\end{proof}

Finally, the next lemma relates $\eta_x$ and $\theta$, both in the infinity norm and the $L^2$ norm, for small $\Vert \gamma \Vert_{\infty}$ and $\Vert \eta \Vert_{\infty}$:
\begin{lemma}\label{lem-thetaetaequiv}
There exists $\delta$ such that, if $\Vert \gamma \Vert_{\infty}+\Vert \eta \Vert_{\infty} < \delta$, then the following inequalities hold
\begin{eqnarray}
 \Vert \theta \Vert_{\infty} &\leq& c_1 \left(\Vert \eta_x \Vert_{\infty} +\Vert \eta \Vert_{\infty}
 +\Vert \gamma \Vert_{\infty}
 \right),\\
 \Vert \theta \Vert_{L^2} &\leq& c_2\left(\Vert \eta_x \Vert_{L^2}+\Vert \eta \Vert_{L^2}+\Vert \gamma \Vert_{L^2} \right),\\
 \Vert \eta_x \Vert_{\infty} &\leq& c_3\left( \Vert \theta \Vert_{\infty} +\Vert \eta \Vert_{\infty}
 +\Vert \gamma \Vert_{\infty}
 \right),\\
\Vert \eta_x \Vert_{L^2} &\leq& c_4\left( \Vert \theta \Vert_{L^2} +\Vert \eta \Vert_{L^2}
+\Vert \gamma \Vert_{L^2}
\right),
\end{eqnarray}
where  $c_1,c_2,c_3,c_4$  are positive constants.
\end{lemma}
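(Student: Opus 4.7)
The plan is to follow exactly the template used in the proofs of Lemma~\ref{lem-gammaetaequiv} and Lemma~\ref{lem-gammaxxetaxequiv}: use equation (\ref{eqn-etaNL2}) to express $\theta$ algebraically in terms of $\eta_x$ (plus lower-order nonlinear functionals), bound those functionals via Lemmas~\ref{lem-F3F4bound} and~\ref{lem-F5F6bound}, and then invert the relation by exploiting that $\Sigma(x)-F_1[\gamma]$ is close to $\Sigma(x)$ when $\Vert\gamma\Vert_\infty$ is small. Since $\theta=\eta_t$ by definition, (\ref{eqn-etaNL2}) rewritten reads
\begin{equation}
\theta = \bigl(\Sigma(x)-F_1[\gamma]\bigr)\eta_x - F_5[\gamma,\gamma_x,\eta] - F_6[\gamma,\eta].
\end{equation}

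For the first pair of inequalities (bounding $\theta$ by $\eta_x$, $\eta$, $\gamma$), I would apply directly the estimate (\ref{eqn-F1bound}) of Lemma~\ref{lem-F3F4bound} and the estimates on $F_5,F_6$ in Lemma~\ref{lem-F5F6bound}. The dangerous terms are those involving $\Vert\gamma_x\Vert_\infty$ (or $\Vert\gamma_x\Vert_{L^2}$) that appear in the bound for $F_5$; these are controlled by Lemma~\ref{lem-gammaetaequiv}, which gives $\Vert\gamma_x\Vert_\infty\leq c(\Vert\eta\Vert_\infty+\Vert\gamma\Vert_\infty)$ and the analogous $L^2$ bound, provided $\Vert\gamma\Vert_\infty$ is smaller than the corresponding threshold. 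The boundary term $\vert\gamma(0)\vert\vert\eta(0)\vert$ in the estimate for $F_5$ is immediately absorbed into $\Vert\gamma\Vert_\infty\Vert\eta\Vert_\infty$. Combining everything yields the first and second stated inequalities.

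For the reverse direction, I would solve algebraically for $\eta_x$:
\begin{equation}
\eta_x = \bigl(\Sigma(x)-F_1[\gamma]\bigr)^{-1}\bigl(\theta + F_5[\gamma,\gamma_x,\eta] + F_6[\gamma,\eta]\bigr).
\end{equation}
Since $\Sigma(x)$ is diagonal with entries bounded below by $\min\{\epsilon_1,\epsilon_2\}>0$ and since (\ref{eqn-F1bound}) gives $\vert F_1[\gamma]\vert\leq C\Vert\gamma\Vert_\infty$, for $\Vert\gamma\Vert_\infty$ below a threshold depending only on the $\epsilon_i$, the matrix $\Sigma(x)-F_1[\gamma]$ is invertible with uniformly bounded inverse. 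Plugging in and using the same $F_5,F_6$ bounds and Lemma~\ref{lem-gammaetaequiv} as above, I would obtain an estimate of the form $\Vert\eta_x\Vert_\infty\leq K(\Vert\theta\Vert_\infty+\Vert\eta\Vert_\infty+\Vert\gamma\Vert_\infty) + K'(\Vert\gamma\Vert_\infty+\Vert\eta\Vert_\infty)\Vert\eta_x\Vert_\infty$, and the same pattern in the $L^2$ norm. Choosing $\Vert\gamma\Vert_\infty+\Vert\eta\Vert_\infty<\delta$ with $\delta$ small enough to make the coefficient $K'(\Vert\gamma\Vert_\infty+\Vert\eta\Vert_\infty)$ smaller than $1/2$, the $\Vert\eta_x\Vert$ term on the right can be absorbed on the left, giving the third and fourth inequalities.

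The only nontrivial step is the absorption argument for the $L^2$ bound: the estimate of $F_5$ in Lemma~\ref{lem-F5F6bound} contains the product $(\vert\eta\vert+\Vert\eta\Vert_{L^2})(\vert\gamma_x\vert+\Vert\gamma_x\Vert_{L^2})$, which when integrated produces a term like $\Vert\eta\Vert_\infty\Vert\gamma_x\Vert_{L^2}$ (not $\Vert\gamma_x\Vert_\infty\Vert\eta\Vert_{L^2}$); using Lemma~\ref{lem-gammaetaequiv} to replace $\Vert\gamma_x\Vert_{L^2}$ by $\Vert\eta\Vert_{L^2}+\Vert\gamma\Vert_{L^2}$ keeps everything in the required form. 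The final $\delta$ is taken as the minimum of the thresholds obtained in each step.
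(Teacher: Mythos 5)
Your proposal is correct and follows essentially the same route as the paper: the paper merely regroups the $\eta_x$-dependent pieces into a single functional $F_{31}=\mathcal K\left[F_1[\gamma]\eta_x+F_{11}[\gamma,\eta]\gamma_x\right]$ (keeping $\Sigma(x)\eta_x$ with the unperturbed $\Sigma$) before running the same bound-and-absorb argument, which has the incidental advantage of removing the boundary term $K(x,0)\Lambda_{NL}(\gamma(0),0)\eta(0)$ from the estimates. The only point to tighten in your version is that for the $L^2$ inequalities the term $\vert\gamma(0)\vert\vert\eta(0)\vert$ from the $F_5$ bound is not controlled by $\Vert\gamma\Vert_\infty\Vert\eta\Vert_\infty$ alone; you should use $\vert\eta(0)\vert\leq\Vert\eta\Vert_\infty\leq C\left(\Vert\eta\Vert_{L^2}+\Vert\eta_x\Vert_{L^2}\right)$ from (\ref{eqn-sobolev1}) together with $\vert\gamma(0)\vert\leq\delta$, and then absorb the resulting small multiple of $\Vert\eta_x\Vert_{L^2}$ exactly as you do for the other terms.
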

\begin{proof}
We can write (\ref{eqn-etaNL2}) analogously to (\ref{eqn-gammaNL2}):
\begin{equation}
\eta_t-\Sigma(x)  \eta_x+F_{31}[\gamma,\gamma_x,\eta,\eta_x](x)+F_6[\gamma,\eta](x)=0,
\end{equation}
where $F_{31}$ is defined as:
\begin{eqnarray}
F_{31}&=&\mathcal K\left[ F_1[\gamma]\eta_x
+F_{11}[\eta]\gamma_x
\right].
\end{eqnarray}
The functional $F_{31}$ verifies the following bound, similar to the bounds developed in Lemma~\ref{lem-F3F4bound}, if $\Vert \gamma\Vert_{\infty}<\min\{\delta_\Lambda,\delta_f\}$:
\begin{eqnarray}
 \vert F_{31} \vert &\leq &C_1 \left(\Vert \gamma \Vert_{L^2}+\vert \gamma\vert\right)
  \left(\Vert \eta_x \Vert_{L^2}+ \vert \eta_x\vert
 \right)
 +C_2 \left(\Vert \gamma_x \Vert_{L^2}+\vert \gamma_x\vert\right)
  \left(\Vert \eta \Vert_{L^2}+ \vert \eta\vert
 \right).\qquad
\end{eqnarray}
Therefore, using Lemma~\ref{lem-F5F6bound}, Lemma~\ref{lem-gammaetaequiv}, and inequality (\ref{eqn-F1bound}), and making $\Vert \gamma \Vert_{\infty}+\Vert \eta \Vert_{\infty}$ small enough, we can compute the bounds as in the proof of Lemma~\ref{lem-gammaetaequiv}.
\end{proof}

Combining the three lemmas, the proposition immediately follows.

\end{document}